\theoremstyle{plain}
  \newtheorem{thm}{Theorem}[section]
  \newtheorem{lem}[thm]{Lemma}
  \newtheorem{cor}[thm]{Corollary}
  \newtheorem{prop}[thm]{Proposition}
  \newtheorem*{obs*}{Observation}
\theoremstyle{definition}
  \newtheorem{defn}[thm]{Definition}
  \newtheorem{ex}[thm]{Example}
\theoremstyle{remark}
  \newtheorem{rem}[thm]{Remark}
\newcommand{\Z}{\mathbb{Z}}
\newcommand{\R}{\mathbb{R}}
\newcommand{\N}{\mathbb{N}}
\newcommand{\maxdeg}{\operatorname{maxdeg}}
\newcommand{\conv}{\operatorname{Conv}}
\newcommand*{\spin}{\ensuremath{\mathrm{Spin}^c}}
\begin{document}

\title{Root polytopes, parking functions, and the HOMFLY polynomial}
\author{Tam\'as K\'alm\'an}
\email{kalman@math.titech.ac.jp
\vspace*{-5pt}
}
\address{
Department of Mathematics,
Tokyo Institute of Technology,
Oh-okayama 2-12-1, Meguro-ku, Tokyo 152-8551, Japan}
\author{Hitoshi Murakami}
\email{starshea@tky3.3web.ne.jp
\vspace*{-5pt}
}
\address{
Graduate School of Information Sciences,
Tohoku University,
Aramaki aza Aoba 6-3-09, Aoba-ku,
Sendai 980-8579,
Japan}
\date{\today}

\begin{abstract}
We show that for a special alternating link diagram, the following three polynomials are essentially the same: a) the part of the HOMFLY polynomial that corresponds to the leading term in the Alexander polynomial; b)~the $h$-vector for a
triangulation of the root polytope of the Seifert graph and c) the enumerator of parking functions for the planar dual of the Seifert graph. These observations yield formulas for the maximal $z$-degree part of the HOMFLY polynomial of an arbitrary homogeneous link as well. Our result is part of a program aimed at reading HOMFLY coefficients out of Heegaard Floer homology.
\end{abstract}

\keywords{}
\subjclass[2000]{Primary 57M27 57M25 57M50}

\thanks{The first author is supported by a Japan Society for the Promotion of Science Grant-in-Aid for Young Scientists (B) 
and the second author by a Grant-in-Aid for Scientific Research (C)}

\maketitle


\section{Introduction}
In this paper we report on a new kind of combinatorial phenomenon in knot theory. Our research was motivated by a desire to understand what the $\text{HOMFLY}$ polynomial of an oriented link `measures,' i.e., to find a natural (for example, diagram-independent) definition for it. While that problem remains wide open (with the 
most promising approach
being the Gopakumar--Ooguri--Vafa conjecture~\cite{gv,ov}), 
we feel we did carry out an interesting case study with some surprising results.

The HOMFLY polynomial $P(v,z)$ \cite{jones} is an invariant of oriented links that specializes to the Conway polynomial $\nabla(z)$ via the substitution $\nabla(z)=P(1,z)$. The latter is equivalent to the Alexander polynomial~$\Delta(t)$ through $\Delta(t)=\nabla(t^{1/2}-t^{-1/2})$. Note that $\nabla$ and $\Delta$ share the same leading coefficient. In $P$, on the other hand, one finds several terms that contribute to the leading monomial of $\nabla$ when we set $v=1$. We will collectively refer to these as the \emph{top} of the HOMFLY polynomial. For homogeneous links \cite{cromwell} (which include all alternating and positive links), the top can also be described as the sum of those terms that realize the $z$-degree of $P$. 

A third, perhaps most adequate definition (equivalent to the previous two when the diagram is homogeneous) is that the top of $P$ is the 
coefficient of 
$z^{n-s+1}$ in $P$, where $n$ is the number of crossings and $s$ is the number of Seifert circles, respectively, in a diagram of the link. Here \emph{Seifert circles} are the simple closed curves that result when we smooth every crossing of a link diagram in the orientation-preserving way. Seifert circles are the vertices of the \emph{Seifert graph}, in which there is an edge between two of them for every crossing where they meet. A Seifert graph is always bipartite, which is the reason why it can be used in the standard construction of an \emph{oriented} spanning surface for a link. 

Let $D$ be a homogeneous link diagram \cite{cromwell}. The aim of this paper is to describe the top of the associated HOMFLY polynomial $P_D$ in terms of the Seifert graph of $D$. By definition, $D$ decomposes 
as a so-called star product of special alternating link diagrams. 
On the level of Seifert graphs, a star product corresponds to a block sum. Here a \emph{block sum} of two connected graphs (blocks) is a one-point union. Each special alternating component of $D$ is represented by a block in the Seifert graph. Every block has a sign, i.e., edges in the same block stand for crossings of the same sign (hence the term `homogeneous'). 
A theorem of Murasugi and Przytycki \cite{mp} says that the top of the HOMFLY polynomial (in the sense of our third description above) behaves multiplicatively under star product. Thus it suffices for us to describe the top of $P$ for 
special alternating links. Since such links are either positive or negative, and for any link $L$ and its mirror image~$L^*$ we have $P_{L^*}(v,z)=P_L(-v^{-1},z)$, we may, without loss of generality, concentrate on positive special alternating links only.

\begin{figure}[htbp] 
\labellist
\small
\pinlabel $r_0$ at 490 280
\pinlabel $\kappa$ at 100 110
\pinlabel $\kappa^*$ at 630 190
\endlabellist
   \centering
   \includegraphics[width=\linewidth]{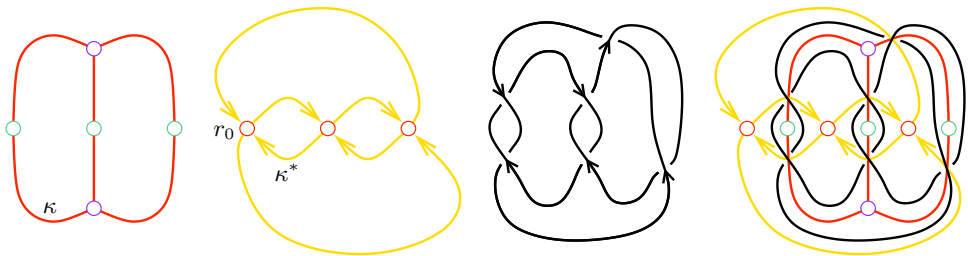} 
   \caption{A plane bipartite graph $G$, its planar dual $G^*$, and associated special alternating link $L_G$. The last panel shows the three objects together.}
   \label{fig:K32}
\end{figure}

A positive special alternating link diagram is, up to isotopy, uniquely described by its Seifert graph (as a plane graph\footnote{A \emph{plane graph} is an isotopy class of embeddings of a graph into the plane. A \emph{planar graph} is an abstract graph that admits such an embedding.}). Hence most of our discussion focuses on a connected plane bipartite graph $G$ which gives rise (by the median construction) to the positive special alternating link $L_G$. See Figure \ref{fig:K32} for an example.
Alexander Postnikov \cite{post} has recently developed a beautiful theory of (not necessarily planar) bipartite graphs. The gist of this paper is the realization that some of his ideas are closely related to knot theory.

Postnikov associates a \emph{root polytope} $Q_G$ to any bipartite graph $G$, constructed as follows. Denote the color classes of $G$ by $E$ and $V$ and take the convex hull, in $\R^E\oplus\R^V$, of the vec\-tors~$\mathbf e+\mathbf v$ for all edges $ev$ of $G$. Here $\mathbf e\in\R^E$ and $\mathbf v\in\R^V$ are the standard generators associated to $e\in E$ and $v\in V$, respectively. The result is an $(|E|+|V|-2)$-dimensional polytope so that, of course, edges of $G$ translate to vertices of $Q_G$. It is also not hard to show that a set of vertices of $Q_G$ is affinely independent if and only if the corresponding edges form a cycle-free subgraph of $G$. In particular, there is a one-to-one correspondence between maximal simplices formed by vertices of $Q_G$ and spanning trees of $G$. 

A \emph{triangulation} of $Q_G$ is a collection of maximal simplices so that their union is the entire root polytope and any two of them intersect in a common face. (Note how it is not allowed to introduce new vertices when we triangulate a polytope.) 

Our first result provides a way to triangulate $Q_G$ in the case when $G$ is a plane graph. Let us orient the dual graph $G^*$ so that each of its edges has an element of $E$ to the right and an element of $V$ to the left. After fixing a root~$r_0$ (a vertex of $G^*$) arbitrarily, we consider \emph{spanning arborescences} rooted at $r_0$. These are those spanning trees of $G^*$ in which each edge points away from $r_0$. Each spanning arborescence has a dual spanning tree in $G$ and we claim the following.

\begin{thm}\label{thm:triang}
Let $G$ be a connected plane bipartite graph. Fix a root $r_0$ and consider all spanning arborescences of $G^*$ rooted at $r_0$, as well as the spanning trees of $G$ dual to them. Then, the collection of those simplices in the root polytope $Q_G$ that correspond to the latter forms a triangulation of $Q_G$.
\end{thm}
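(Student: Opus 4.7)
The plan is to establish the two defining conditions of a triangulation for the collection $\mathcal{T}$ of simplices $\Delta(T)$, where $T$ is a spanning tree of $G$ dual to an $r_0$-rooted arborescence of $G^*$. The observation just before the theorem already implies that each $\Delta(T)$ is a maximal simplex in $Q_G$, so what remains is to verify pairwise disjoint relative interiors and that the union covers $Q_G$.

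For the disjointness step, I would take two distinct such spanning trees $T_1,T_2$ and pick $e\in T_1\setminus T_2$. The fundamental cycle $C$ of $e$ with respect to $T_2$ bounds a topological disk $D$ in the plane, and the duals of its edges form a cut in $G^*$ separating the faces of $G$ inside $D$ from those outside. The consistent orientation of $G^*$ (black on the right, white on the left) partitions these cut edges into those pointing into $D$ and those pointing out. On the polytope side, the vertices of $Q_G$ coming from the edges of $C$ satisfy an affine dependence whose coefficients alternate in sign along the cycle (because $G$ is bipartite), and crucially these signs line up with the orientation partition of the dual cut. The spanning tree dual to $T_2$ meets the cut in a single edge, namely $e^*$, and the arborescence property then forces $e^*$ to be oriented from the $r_0$-side of the cut to the opposite side; the dual of $T_1$ satisfies an analogous constraint on its own intersection with the cut. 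Extracting an appropriate linear functional from this information—essentially a potential on $V(G^*)$ that drops across the cut—I would then show that $\Delta(T_1)$ and $\Delta(T_2)$ lie in opposite closed half-spaces, meeting only along a common face.

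For the covering step, I would give a canonical reduction assigning to each point $p\in Q_G$ a simplex in $\mathcal{T}$. Starting from any expression $p=\sum\lambda_{ev}(\mathbf e+\mathbf v)$, whenever the support contains a cycle $C$, the very same cycle/cut analysis selects a canonical edge $f\in C$ whose removal preserves the arborescence property on the dual; shifting the $\lambda_{ev}$ along $C$ in the alternating pattern supplied by the affine dependence annihilates $\lambda_f$ without changing $p$. Iterating shrinks the support to a forest and hence, after extension, to a spanning tree in $\mathcal{T}$.

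\textbf{Main obstacle.} The technical heart is the disjointness step, and within it the construction of the separating functional. Three ingredients have to interact cleanly: the cycle/cut duality between $G$ and $G^*$, the alternating-sign rule for affine dependences along cycles in bipartite graphs, and the characterization of an $r_0$-arborescence through its edges crossing each directed cut. Pinning down the correct functional—most likely a weighted indicator of the $r_0$-side of the cut—and verifying that it behaves uniformly across all pairs $T_1,T_2\in\mathcal{T}$ is where the real work lies; once this is in place, the canonical reduction in the covering step is essentially a corollary of the same analysis.
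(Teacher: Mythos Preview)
Your compatibility argument has a real gap, and it stems from working with the wrong cycle. The paper does not construct a separating functional at all; it invokes Postnikov's criterion (Lemma~\ref{lem:compa}): two tree-simplices fail to meet in a common face exactly when $G$ contains a cycle whose edges \emph{alternate} between the two trees. Given such an alternating cycle $\Phi$, the bipartite orientation of $G^*$ forces every edge of $A_1$ that crosses $\Phi$ to point one way and every edge of $A_2$ the other, so $r_0$ cannot lie on the correct side for both and one of $A_1,A_2$ fails to be an arborescence---contradiction. Your fundamental cycle $C$ of a single edge $e\in T_1\setminus T_2$ with respect to $T_2$ is not an alternating cycle: it contains one edge of $T_1$ and the rest from $T_2$, so the alternating signs in the affine dependence along $C$ (which come from the $2$-colouring of $G$, not from tree membership) do not place the $T_1$-vertices and the $T_2$-vertices on opposite sides of any hyperplane. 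Concretely, your ``analogous constraint'' on $A_1$ is false as stated: $A_1$ meets the cut dual to $C$ in the duals of those path-edges of $C$ that happen to lie outside $T_1$, and nothing prevents these from pointing both into and out of the disk $D$. No potential on $V(G^*)$ built from this single cut will separate $\Delta(T_1)$ from $\Delta(T_2)$, let alone exhibit their intersection as a common face.

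For covering, the paper takes a completely different route---a counting argument rather than a reduction. Theorem~\ref{thm:post} says every triangulation of $Q_G$ has exactly as many maximal simplices as there are hypertrees, and \cite[Theorem~10.1]{hypertutte} is then quoted for the fact that every hypertree is realised by a spanning tree dual to some $r_0$-arborescence; once compatibility is known, this forces the simplices to fill $Q_G$. Your reduction idea is reasonable in spirit, but the key sentence is circular: at the moment you ``select a canonical edge $f\in C$ whose removal preserves the arborescence property on the dual,'' the support is not yet a tree and its complement is not an arborescence, so there is no such property to preserve. You have not specified which edge to drop, nor argued why the iteration terminates at a tree in $\mathcal{T}$ rather than at an arbitrary spanning tree of $G$.
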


A triangulation of a polytope is an instance of a pure simplicial complex, i.e., one in which all maximal simplices have the same dimension. To any $d$-dimensional simplicial complex, it is customary to associate the \emph{$f$-vector}
\footnote{The $f$-vector, as well as the $h$-vector below, may be more appropriately called a polynomial. However the vector terminology is so common in combinatorics that we decided to keep it.}
\[f(y)=y^{d+1}+f_0y^{d}+f_1y^{d-1}+\cdots+f_{d-2}y^2+f_{d-1}y+f_d,\]
where 
 $f_k$, for $k\ge0$, is the number of $k$-dimensional simplices in the complex. The \emph{$h$-vector} of the same complex is defined as $h(x)=f(x-1)$. The latter notion becomes significant (for example, it has positive coefficients)  for so-called \emph{shellable complexes}, that is complexes with a shelling order. Here a \emph{shelling order} of a pure simplicial complex, $\sigma_1<\sigma_2<\cdots<\sigma_{f_d}$, lists the maximal simplices in such a way that each $\sigma_i$, $i\ge1$, intersects the set~$\sigma_1\cup\cdots\cup\sigma_{i-1}$ in a union of $c_i$ codimension one faces. We always have $c_1=0$ but assume as part of the definition that $c_i\ge1$ for $i\ge2$. Whether such an order exists is a subtle question, but when it does, it is not hard to show \cite{swartz} that 
 \begin{equation}\label{eq:ftoh}
 h(x)=f(x-1)=\sum_{i=1}^{f_d}x^{d+1-c_i}.
 \end{equation}

The HOMFLY polynomial, like most knot polynomials, is usually computed via successive applications of a skein relation. The process is captured by a so-called \emph{computation tree}. The nodes of the computation tree are link diagrams with the original diagram playing the role of root. Edges in the tree correspond to simple local modifications of the diagrams. For us, the relevant skein relation is 
\begin{equation}\label{skeinstuff}
P_{\includegraphics[totalheight=8pt]{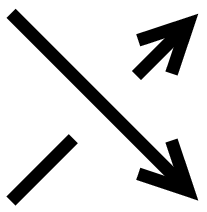}}=v^2P_{\includegraphics[totalheight=8pt]{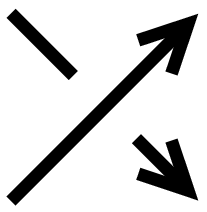}}+vzP_{\includegraphics[totalheight=8pt]{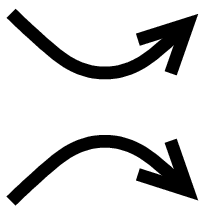}},
\end{equation}
coupled with the initial condition $P_{\includegraphics[totalheight=8pt]{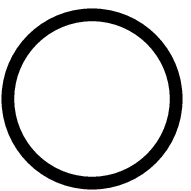}}=1$ for the HOMFLY polynomial of the unknot. Thus the computation tree is a rooted binary tree in which every non-leaf node has two descendants, resulting from either \emph{changing} or \emph{smoothing} a crossing. A priori, the crossings that we operate on can be chosen quite freely. The only restriction is that the leaves of the tree should be diagrams of the unknot, or of other links whose HOMFLY polynomials are known.

Now, the main idea of the paper is to use the spanning arborescences above to construct a computation tree $\mathscr T$ for $P_{L_G}$. Smoothing a crossing is equivalent to removing the corresponding edge from the Seifert graph. We can also keep track of crossing changes by, say, making the corresponding edge dotted. Thus each vertex of the tree will be described by a subgraph of $G$ with some dotted edges. To build $\mathscr T$, first we will use a backtrack algorithm to enumerate all arborescences of $G^*$. The subgraphs giving rise to the vertices of $\mathscr T$ will be their duals and the tree structure (as well as the dotted edges) will reflect the steps in the algorithm. 

The leaves of the tree $\mathscr T$ will arise from two kinds of subgraphs: either a spanning tree of $G$, which corresponds to an unknot diagram, or a subgraph so that along its `outside contour,' every other edge is dotted. We will not compute the HOMFLY polynomials associated to the latter (so as far as the `full' HOMFLY polynomial is concerned, $\mathscr T$ remains incomplete), but we will prove that these latter leaves do not contribute to the top of $P_{L_G}$. By contrast, the spanning trees only contribute to the top. In fact, each gives a single monomial in which the exponent of $v$ is determined by the number of dotted edges.

The tree $\mathscr T$ has a natural embedding in the plane by always drawing the result of smoothing to the right and the result of crossing change to the left. In particular, the leaves of $\mathscr T$ have a natural order from right to left. As to the leaves that belong to spanning trees, 
\begin{enumerate}
\item by Theorem \ref{thm:triang}, they correspond to the maximal simplices in a triangulation of $Q_G$ and
\item we claim that the right-to-left order 
is a shelling order for the triangulation.
\end{enumerate}
From this, the following is immediate.

\begin{prop}
The triangulation described in Theorem \ref{thm:triang} is shellable.
\end{prop}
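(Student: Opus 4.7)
The plan is to establish the shellability claim made just above the proposition: the right-to-left order $\sigma_1 < \sigma_2 < \cdots < \sigma_N$ of the spanning-tree leaves of the computation tree $\mathscr T$ is a shelling of the triangulation from Theorem \ref{thm:triang}. The proposition then follows directly from the definition.

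The core assertion I would aim for is that, for every $i$, the number $c_i$ of codimension-one faces of $\sigma_i$ lying in $\sigma_1 \cup \cdots \cup \sigma_{i-1}$ equals the number of dotted edges in the spanning tree $T_i$ associated to $\sigma_i$. First, $c_1 = 0$: the rightmost leaf is reached by always choosing the ``smoothing'' branch, so $T_1$ contains no dotted edges. For $i \geq 2$, the path in $\mathscr T$ from the root to the $i$-th leaf must turn left at least once, so $T_i$ has at least one dotted edge and $c_i \geq 1$, as required.

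To verify the match, I would proceed in two directions. For the \emph{inclusion} direction, given a dotted edge $e \in T_i$, I would locate the node $N_e$ of $\mathscr T$ where $e$ was marked dotted; the sibling ``smoothing'' subtree below $N_e$ is explored earlier in the right-to-left order and contains some spanning-tree leaf $\sigma_j$ whose tree $T_j$ contains $T_i \setminus \{e\}$, exhibiting the codimension-one face $\sigma_i \setminus \{e\}$ as a subset of $\sigma_j$. For the \emph{exclusion} direction, given a non-dotted edge $e \in T_i$, I would need to show that any other spanning tree $T' \neq T_i$ containing $T_i \setminus \{e\}$ appears strictly to the left of $T_i$ in the right-to-left order; such a $T'$ necessarily has the form $(T_i \setminus \{e\}) \cup \{f\}$ for some edge $f \in G \setminus T_i$ bridging the two components of $T_i \setminus \{e\}$.

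The exclusion direction is the main obstacle. It requires a careful analysis of the backtrack algorithm enumerating arborescences of $G^*$, and of how the planar embedding orders the possible edge choices at each node: because $e$ is non-dotted, it was placed into $T_i$ at a ``smoothing'' step, and one must argue that any swap $e \leftrightarrow f$ becomes available only later in the enumeration, so that the corresponding $T'$ indeed lies to the left of $T_i$. Once this is established, $\sigma_i \cap (\sigma_1 \cup \cdots \cup \sigma_{i-1})$ is exactly the union of the facets $\sigma_i \setminus \{e\}$ over dotted edges $e \in T_i$ --- a nonempty pure codimension-one subcomplex of $\partial \sigma_i$ --- and the shelling conditions are verified.
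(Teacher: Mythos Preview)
Your two directions do not combine to give the claimed equality $\sigma_i\cap(\sigma_1\cup\cdots\cup\sigma_{i-1})=\bigcup_{e\text{ dotted}}(\sigma_i\setminus\{e\})$. Inclusion establishes the containment $\supseteq$. Exclusion says only that for non-dotted $e$ no single earlier simplex contains the \emph{whole} facet $\sigma_i\setminus\{e\}$. Neither statement rules out an earlier $\sigma_j$ meeting $\sigma_i$ in a lower-dimensional face whose vertex set happens to contain \emph{every} dotted edge of $T_i$; such a face lies in no dotted-edge facet of $\sigma_i$, so the intersection would fail to be pure of codimension one and the shelling condition would not be verified.

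What is actually required for $\subseteq$ (and hence for shellability) is the standard criterion: for every $j<i$ there exists $j'<i$ with $\sigma_i\cap\sigma_j\subseteq\sigma_i\cap\sigma_{j'}$ and the latter a facet of $\sigma_i$. Equivalently, for every earlier tree $T_j$ there is a dotted edge of $T_i$ missing from $T_j$. The paper (Theorem~\ref{thm:shell}) supplies exactly this: given arborescences $A<B$, it locates the last common node $(A_0,S_0)$ on the paths to them; the augmenting edge $\delta$ there belongs to $A$ and is skipped for $B$, so its dual is a dotted edge of $T_i$ absent from $T_j$. The paper then explicitly exhibits the earlier simplex sharing the facet opposite that vertex by swapping $\delta$ into $B$ in place of the unique edge of $B$ with the same endpoint, and checks this new arborescence precedes $B$. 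Your exclusion direction corresponds instead to the converse half of Theorem~\ref{thm:hfromskipped}, which pins down the value of $c_i$ once shellability is already known; it is not the missing containment.
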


Furthermore, as we build $Q_G$ simplex-by-simplex using the shelling order, the number $c_i$ of facets along which the simplex $\sigma_i$ is attached is exactly the number of dotted edges in the corresponding tree. From this our first main result follows:

\begin{thm}\label{thm:homfly=h}
For any connected plane bipartite graph $G$ with $s$ vertices and $n$ edges, the top of the HOMFLY polynomial $P_{L_G}(v,z)$ (of the positive special alternating link~$L_G$) is 
\[
v^{n+s-1}h(v^{-2}),\] 
where $h$ is the $h$-vector of the triangulation, given in Theorem \ref{thm:triang}, of the root polytope~$Q_G$.
\end{thm}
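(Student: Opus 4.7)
The plan is to compute the top of $P_{L_G}$ directly by applying the skein relation \eqref{skeinstuff} along the computation tree $\mathscr T$ described in the paragraphs preceding the theorem, and then to match the resulting expression with $v^{n+s-1}h(v^{-2})$ via Theorem \ref{thm:triang} and formula \eqref{eq:ftoh}. Starting from $L_G$ (with $n$ positive crossings, one for each edge of $G$), each application of \eqref{skeinstuff} spawns two children: smoothing (edge removed, factor $vz$) and changing (edge dotted, factor $v^2$). At a leaf corresponding to a spanning tree $T$ of $G$, every edge not in $T$ has been smoothed—contributing exactly $n-s+1$ smoothings—while $c_T$ of the $s-1$ edges in $T$ have been dotted. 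Since the Seifert graph of this leaf is a tree, the corresponding diagram is an unknot with HOMFLY $1$, so the leaf contributes
\[
(vz)^{n-s+1}(v^2)^{c_T}=v^{n-s+1+2c_T}z^{n-s+1}
\]
to $P_{L_G}$. Provided the remaining (auxiliary) leaves contribute nothing to $z^{n-s+1}$, the top of $P_{L_G}$ equals $v^{n-s+1}\sum_T v^{2c_T}$, where $T$ ranges over the tree leaves of $\mathscr T$.

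Two further facts must then be established. First, the auxiliary leaves (those whose subgraph has an outside contour with alternating dotted/smoothed edges) must contribute zero to the coefficient of $z^{n-s+1}$; since the $z$-power accumulated en route to such a leaf after $k$ smoothings is $z^k$, this reduces to a $z$-degree estimate showing that the HOMFLY polynomial of the leaf diagram has $z$-degree strictly less than $n-s+1-k$, a Seifert-graph count for the specific "alternating collar" configurations that terminate the backtrack algorithm. Second, the right-to-left enumeration of the tree leaves in $\mathscr T$, which by Theorem \ref{thm:triang} indexes the maximal simplices $\sigma_1,\dots,\sigma_N$ of a triangulation of the $(s-2)$-dimensional polytope $Q_G$, must be shown to be a shelling, and the shelling attachment number $c_i$ at each stage must be identified with the corresponding $c_T$.

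Granting these, formula \eqref{eq:ftoh} with $d=s-2$ gives
\[
h(v^{-2})=\sum_i v^{-2(s-1-c_i)}=v^{-2(s-1)}\sum_T v^{2c_T},
\]
so that $v^{n+s-1}h(v^{-2})=v^{n-s+1}\sum_T v^{2c_T}$, matching the computed top. The main obstacle is the second task: one must design the backtrack algorithm enumerating arborescences of $G^*$ in such a way that the induced order on their dual spanning trees is simultaneously a shelling of the triangulation and so that the shelling attachment count at each stage coincides, step for step, with the number of crossing changes performed along the corresponding root-to-leaf path in $\mathscr T$. This coupled combinatorial bookkeeping—linking the local moves in the diagram to the incremental facet structure of the polytope—is the heart of the argument. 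The vanishing of the auxiliary-leaf contributions, by contrast, should be a comparatively routine degree estimate.
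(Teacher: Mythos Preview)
Your proposal follows exactly the paper's approach: Theorem~\ref{thm:treetop} establishes the sum over type~I leaves, Theorems~\ref{thm:shell} and~\ref{thm:hfromskipped} handle the shelling and the identification $c_i=|S|$, and the displayed computation at the end of Section~\ref{sec:triang} is precisely yours. Your assessment of relative difficulty is inverted, however: the shelling and bookkeeping you call ``the heart of the argument'' take the paper about a page, whereas the ``comparatively routine degree estimate'' for the auxiliary leaves (Proposition~\ref{biglemma}) is singled out as ``where most of the difficulty in this paper is concentrated'' and occupies the entirety of Section~\ref{hitoshi}.
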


In general, different triangulations of the same polytope can have different $h$-vectors. For instance, the number of maximal simplices, i.e., the sum of the coefficients in the $h$-vector, may vary. That can not occur for a root polytope because maximal simplices share the same volume. In fact, much more is true: any two triangulations of $Q_G$ have the same $f$-vector and hence the same $h$-vector. We learned this fact from A. Postnikov who gave a short proof using total unimodularity and the Ehrhart polynomial. We hope to come back to this point in a future joint paper.

Our other main theorem gives a third description of the two quantities that are equated in Theorem~\ref{thm:homfly=h}. It is given in terms of \emph{parking functions} associated to $G^*$,
as defined by Postnikov and Shapiro \cite{ps}. Having fixed a root $r_0$ in $G^*$, parking functions are of the form~$R\setminus\{r_0\}\to\N=\{\,0,1,2,\ldots\,\}$, where $R$ is the vertex set of $G^*$. See Definition \ref{def:parking} for the details. Let us associate the \emph{index}
\begin{equation}\label{eq:index}
i(\pi)=\sum_{r\in R\setminus\{r_0\}}\pi(r)
\end{equation}
to the parking function~$\pi$. If $\Pi=\Pi(G^*,r_0)$ denotes the set of parking functions, then let the \emph{parking function enumerator} be 
\begin{equation}\label{eq:park}
p(u)=\sum_{\pi\in\Pi}u^{i(\pi)}.
\end{equation}

\begin{thm}\label{thm:homfly=park}
For any connected plane bipartite graph $G$ with $s$ vertices and $n$ edges, the top of the HOMFLY polynomial $P_{L_G}(v,z)$ is equal to 
\[v^{n-s+1}p(v^2),\] 
where $p$ is the parking function enumerator of the directed graph $G^*$.
\end{thm}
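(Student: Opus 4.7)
The strategy is to combine Theorem \ref{thm:homfly=h} with a combinatorial identity relating the $h$-vector of the triangulation of $Q_G$ to the parking function enumerator of $G^*$. Indeed, Theorems \ref{thm:homfly=h} and \ref{thm:homfly=park} taken together are equivalent to
\[
v^{2(s-1)}h(v^{-2})=p(v^2),
\]
and using the shelling formula (\ref{eq:ftoh}) (with $d+1=s-1$) and the substitution $x=v^2$, this in turn reduces to the combinatorial identity
\[
\sum_{A}x^{c_A}\;=\;\sum_{\pi\in\Pi(G^*,r_0)}x^{i(\pi)},
\]
where $A$ ranges over all spanning arborescences of $G^*$ rooted at $r_0$ (equivalently, over the maximal simplices of the triangulation in Theorem \ref{thm:triang}) and $c_A$ is the number of dotted edges in the node of the computation tree $\mathscr T$ corresponding to $A$. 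Hence the theorem reduces to exhibiting a bijection $A\mapsto\pi_A$ between spanning arborescences and parking functions with $i(\pi_A)=c_A$.

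The natural way to define this bijection is to read $\pi_A$ off the backtrack algorithm that builds $\mathscr T$. At the moment the algorithm commits to the unique $A$-edge entering a given non-root vertex $r\in R$, a certain number of other candidate incoming edges at $r$ are rejected and their $G$-duals become dotted; I would let $\pi_A(r)$ equal this count. By construction, $\sum_{r\neq r_0}\pi_A(r)=c_A$ for every $A$. Three checks remain: (i) that $\pi_A$ depends on $A$ alone and not on incidental choices made by the algorithm --- I would establish this by giving an intrinsic description of $\pi_A(r)$ as an activity of the $A$-edge at $r$ relative to a fixed linear order on $E(G^*)$ induced by the plane embedding; (ii) that $\pi_A$ satisfies the parking axiom of Definition \ref{def:parking}; and (iii) that the map $A\mapsto\pi_A$ is a bijection. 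Given (i) and (ii), part (iii) follows by counting from the Postnikov--Shapiro fact \cite{ps} that $|\Pi(G^*,r_0)|$ equals the number of spanning arborescences of $G^*$.

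The main obstacle is step (ii). To check it, suppose for contradiction that some nonempty $U\subseteq R\setminus\{r_0\}$ violates the parking axiom for $\pi_A$, so that every $r\in U$ has at least as many rejected incoming edges at the moment of commitment as there are edges of $G^*$ leading from $r$ out of $U$. Using the planarity of $G^*$ together with the arborescence structure of $A$, I would aim to show that these excess rejections force the backtrack to exhaust the boundary of $U$ before it can span $U$, contradicting the fact that $A$ is in fact produced. Once this planar-combinatorial lemma is in hand the rest of the argument is formal bookkeeping, and the theorem follows from Theorem \ref{thm:homfly=h}.
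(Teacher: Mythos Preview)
Your overall strategy coincides with the paper's: define a map $A\mapsto\pi_A$ from spanning arborescences (type~I leaves) to parking functions by letting $\pi_A(r)$ count the skipped edges pointing to $r$, check that it is an index-preserving bijection, and combine with the computation-tree description of the top of $P_{L_G}$. The paper carries this out as Theorem~\ref{thm:park} and then invokes Theorem~\ref{thm:treetop} directly, whereas you route through Theorem~\ref{thm:homfly=h} and the shelling formula; since Theorem~\ref{thm:hfromskipped} identifies $c_A$ with the number of skipped edges, this detour is harmless but unnecessary.

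There are, however, two genuine gaps in your execution. First, step~(iii) does not follow from counting alone: knowing $|\Pi|=\#\{\text{spanning arborescences}\}$ and that $A\mapsto\pi_A$ lands in $\Pi$ gives a bijection only once you supply injectivity or surjectivity, neither of which you address. The paper proves both directly: given $\pi$, it reconstructs the unique path in $\mathscr A$ leading to a type~I leaf with that data. Second, your step~(i) is a non-issue---once $r_0$ and $\kappa$ are fixed, the algorithm of Section~\ref{sec:arbo} is fully deterministic and Lemma~\ref{lem:allarboappear} already says each spanning arborescence sits at a unique leaf $(A,S)$, so $\pi_A$ is automatically well defined without any ``intrinsic activity'' description.

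For step~(ii), your contradiction sketch via ``exhausting the boundary of $U$'' is vague and your phrasing ``edges of $G^*$ leading from $r$ out of $U$'' suggests the out-degree rather than the relative in-degree $\deg_{R'}(r)$ of Definition~\ref{def:parking}. The paper's argument is much shorter: given $R'$, take the \emph{first} vertex $r\in R'$ that enters the root component along the path in $\mathscr A$ from $(\varnothing,\varnothing)$ to $(A,S)$. All skipped edges into $r$ are already in place by then (no further edge into $r$ can be augmenting once $r$ is reached), and each of them, together with the arborescence edge into $r$, starts outside $R'$. Hence $\pi_A(r)+1\le\deg_{R'}(r)$, as required.
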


A consequence of the Theorem is the 
fact 
that $p(u)$ is independent of the choice of $r_0$. 
As a biproduct of our arguments, we obtain a bijection between spanning arborescences of $G^*$ rooted at $r_0$ and parking functions defined on $R\setminus\{r_0\}$. It has been known \cite{ps} that those two sets have the same cardinality. Our bijection is similar to, but appears not to be a special case of, those in the literature \cite{jonevevan}. We also obtain


\begin{cor}\label{cor:h=p}
Let $G$ be a connected plane bipartite graph on $s$ vertices with planar dual $G^*$. Then the $h$-vector $h(x)$ of any triangulation of the root polytope~$Q_G$ and the parking function enumerator $p(u)$ of $G^*$ satisfy \[u^{s-1}h(u^{-1})=p(u).\]
\end{cor}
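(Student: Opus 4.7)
The plan is to derive the corollary immediately from Theorems \ref{thm:homfly=h} and \ref{thm:homfly=park}, both of which express the top of $P_{L_G}(v,z)$ in closed form, and then promote the resulting identity from the specific triangulation of Theorem \ref{thm:triang} to an arbitrary triangulation of $Q_G$ via the $f$-vector invariance fact quoted in the discussion after Theorem \ref{thm:homfly=h}.

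First I would set the two expressions for the top of $P_{L_G}$ equal to each other: Theorem \ref{thm:homfly=h} gives it as $v^{n+s-1}h(v^{-2})$ (for the $h$-vector of the triangulation of $Q_G$ constructed in Theorem \ref{thm:triang}), while Theorem \ref{thm:homfly=park} gives it as $v^{n-s+1}p(v^2)$. Cancelling the common factor $v^{n-s+1}$ yields
\[
v^{2(s-1)}h(v^{-2}) = p(v^2).
\]
Substituting $u = v^2$ (an identity in the formal variable $v$ becomes one in $u$ since $v^2$ can take arbitrary positive real values, or simply because both sides are polynomials in $v^2$) produces precisely $u^{s-1}h(u^{-1})=p(u)$ for the $h$-vector of the triangulation of Theorem \ref{thm:triang}.

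To upgrade the conclusion to \emph{any} triangulation of $Q_G$, I would appeal to the statement, recorded in the paragraph following Theorem \ref{thm:homfly=h}, that all triangulations of a root polytope share the same $f$-vector, and hence the same $h$-vector. Once the $h$-vector is triangulation-independent, the identity established above transfers verbatim to every triangulation, completing the proof.

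The only conceptual obstacle is this last invariance input, which is attributed in the paper to Postnikov (via total unimodularity and the Ehrhart polynomial) and is external to what we have proven; apart from invoking that fact, the argument is a direct substitution. If one wished to avoid the external input, an alternative is to state the corollary for the specific triangulation of Theorem \ref{thm:triang}, in which case the argument reduces strictly to the two-line manipulation above.
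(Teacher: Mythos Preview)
Your proposal is correct and matches the paper's intended argument: the corollary is stated immediately after Theorems \ref{thm:homfly=h} and \ref{thm:homfly=park} precisely because equating the two expressions for the top of $P_{L_G}$ and substituting $u=v^2$ yields the identity, with the extension to arbitrary triangulations supplied by the Postnikov invariance fact recorded just after Theorem \ref{thm:homfly=h}. The paper leaves this derivation implicit, and you have spelled it out accurately.
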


The combinatorial setup used in this paper yields yet another description of the top of $P_{L_G}$, this time as the \emph{interior polynomial} \cite{hypertutte} of the hypergraph $(V,E)$. \label{int_conj}
This last claim is only conjecturally true, but once it is proved, it will provide a hitherto unknown connection between the HOMFLY polynomial and Heegaard Floer homology. Namely, the interior polynomial can be computed from the so-called hypertree polytope (see \cite{hypertutte} for definitions) and the latter can be thought of as the Heegaard Floer homology of a certain sutured manifold\footnote{\label{foot} In order to be more precise, let us note that the sutured manifold in question is a handlebody, so that the set of $\spin$ structures supporting the sutured Floer homology is a subset of a lattice. This set is isomorphic to the set of lattice points in the hypertree polytope. Furthermore, the homology group corresponding to each $\spin$ structure in the support is $\Z$.} \cite{jkr}. Due to Theorem~\ref{thm:homfly=h}, the problem of reading (some) HOMFLY coefficients out of Heegaard Floer homology is reduced to the open combinatorial problem that the $h$-vector (of a triangulation of the root polytope $Q_G$ of the connected bipartite graph $G$) is equivalent to the interior polynomial (of the hypergraph $(V,E)$, where $V$ and $E$ are the color classes of $G$). 
 
The sutured manifold mentioned above is the complement of a Seifert surface. However this surface is bounded not by $L_G$ but by a related link. One may wish to consider instead the minimum genus Seifert surface $F_G$ for $L_G$ that deformation retracts to $G$. The sutured Floer homology $S_G$ of the complement of $F_G$ is also a hypertree polytope (in the sense of footnote \ref{foot}) but of the wrong hypergraph, whose interior polynomial is different from the top of $P_{L_G}$. On the other hand, the set $\Pi(G^*)$ of parking functions can be thought of as a rearrangement\footnote{We plan to clarify the meaning of this claim in a future joint paper with Dylan Thurston. Here let us only note that $S_G$ and $\Pi(G^*)$ both have dimension $|R|-1$.} of $S_G$ and thus Theorem \ref{thm:homfly=park} also becomes a way of obtaining information on the HOMFLY polynomial from Heegaard Floer homology.

We end the introduction with a statement of our main result for homogeneous links. As explained above, this follows directly from our other claims via Murasugi--Przytyczki's product formula.

\begin{thm}\label{thm:homog}
Let $D$ be a homogeneous link diagram with Seifert graph $G$ that is composed of $k$ positive and $l$ negative blocks. 
Let $p_i(v)$, $1\le i\le k$ and $p_j'(v)$, $1\le j\le l$ be the parking function enumerators of the dual graphs of each block. By Corollary \ref{cor:h=p}, these polynomials can also be interpreted as $h$-vectors. 

Now if $G$ has altogether $s_+$ vertices and $n_+$ edges in its positive blocks and $s_-$ vertices and $n_-$ edges in its negative blocks (here type II Seifert circles, i.e., vertices where blocks are attached, are counted once for each block they belong to) so that the writhe of $D$ 
is $w(D)=n_+-n_-$, then the coefficient of $z^{n-s+1}$ in the HOMFLY polynomial~$P_D(v,z)$ (which is the highest power of $z$ occurring in $P$) is \[(-1)^{n_--s_-+l}\cdot v^{w(D)-s_++s_-+k-l}\cdot\prod_{i=1}^kp(v^2)\cdot\prod_{j=1}^lp'(v^{-2}).\]
\end{thm}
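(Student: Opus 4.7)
The plan is to derive Theorem \ref{thm:homog} as a direct assembly of Theorem \ref{thm:homfly=park}, the Murasugi--Przytycki product formula for the top of HOMFLY under star product, and the mirror image rule $P_{L^*}(v,z)=P_L(-v^{-1},z)$. Since $D$ is homogeneous, it decomposes as a star product of positive and negative special alternating diagrams, one for each block of $G$. Under star product $n$ is additive while $s_1+s_2$ is reduced by one because a Seifert circle is identified, so $n-s+1$ is additive over the factors: $n-s+1=\sum_{\text{blocks}}(n_{\text{block}}-s_{\text{block}}+1)$. Consequently the top coefficient, as a polynomial in $v$, factors as the product of the tops of the blocks.

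For each positive block, Theorem \ref{thm:homfly=park} directly yields a top of $v^{n_i-s_i+1}p_i(v^2)$, so the combined contribution of the $k$ positive blocks is $v^{n_+-s_++k}\prod_{i=1}^k p_i(v^2)$. For each negative block, the corresponding link is the mirror of a positive special alternating link on the same Seifert graph; substituting $v\mapsto-v^{-1}$ in Theorem \ref{thm:homfly=park} (noting that $n-s+1$ is invariant under taking the mirror) gives a top of
\[
(-v^{-1})^{n_j'-s_j'+1}p_j'(v^{-2})=(-1)^{n_j'-s_j'+1}v^{-(n_j'-s_j'+1)}p_j'(v^{-2}),
\]
so the combined contribution of the $l$ negative blocks is $(-1)^{n_--s_-+l}v^{-(n_--s_-+l)}\prod_{j=1}^l p_j'(v^{-2})$.

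Multiplying these two products together and using $w(D)=n_+-n_-$, I would collect the exponent of $v$ into $w(D)-s_++s_-+k-l$, which matches the formula in the theorem. The argument is essentially bookkeeping with signs and exponents, so I do not anticipate a substantive obstacle. The one point deserving a moment of care is that $s_\pm$ count each cut vertex with its block multiplicity; this is precisely the convention that makes the exponent arithmetic additive over the block decomposition and consistent with the overall $z$-degree $n-s+1$ of the diagram~$D$.
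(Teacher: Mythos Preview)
Your proposal is correct and follows exactly the argument the paper outlines: the paper does not give a separate formal proof of this theorem but simply states (in the introduction) that it ``follows directly from our other claims via Murasugi--Przytyczki's product formula,'' together with the mirror rule $P_{L^*}(v,z)=P_L(-v^{-1},z)$ and Theorem~\ref{thm:homfly=park}. Your bookkeeping of the $v$-exponents and signs, including the observation about counting cut vertices with block multiplicity, matches the paper's conventions and the stated formula.
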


The paper is organized as follows. We start with material on arborescences and organize them in a binary tree in Section \ref{sec:arbo}. In Section \ref{sec:comptree}, we use the binary tree to compute the top of the HOMFLY polynomial of a special alternating link. The proof of an important proposition will be delayed until Section \ref{hitoshi}. In Section \ref{sec:rootpoly}, we recall some of Postnikov's results and prove Theorem \ref{thm:triang}. In Sections \ref{sec:triang} and \ref{sec:park}, respectively, we use the results of Section \ref{sec:comptree} to establish Theorems \ref{thm:homfly=h} and \ref{thm:homfly=park}.

{\bf Acknowledgements.} We are grateful to Alexander Postnikov from whom we learned about the $h$-vector. The paper also benefited greatly from conversations with Dylan Thurston.

\section{Arborescences}\label{sec:arbo}

Most arguments in the present paper are centered around a binary tree. We will give three mutually isomorphic descriptions of it. In this section, the binary tree will appear as the `tree of arborescences.' In the next, we will also describe it as the `tree of subgraphs' and then as the `HOMFLY computation tree.' But first, we introduce an object which will become a distinguished leaf of the tree of arborescences.

\subsection{The clocked arborescence}\label{ssec:clock}

All graphs that appear in this paper are finite. 
Multiple edges and loop edges are allowed. (Of course, the latter do not occur in bipartite graphs.) A \emph{subgraph} of a graph will always have the same vertex set as the original, i.e., a subgraph will just be a subset of the edges of the graph. A \emph{spanning tree} is a connected and cycle-free subgraph.

By definition, the edges of any plane graph $G$ and the edges of its planar dual $G^*$ are in a one-to-one correspondence. This gives rise to a bijection between subgraphs, where a set of edges of $G$ is paired with the complementer set of the corresponding edges of $G^*$. Elements of such a pair will be called \emph{dual subgraphs}. If $G$ and $G^*$ are both connected, then it is well known that a subgraph (of $G$ or of $G^*$) is a spanning tree if and only if its dual is one.

\begin{defn}
Let $J$ be a directed graph (possibly with loop edges and multiple edges) and let us fix a vertex $r_0$, called the \emph{root}, in $J$. An \emph{arborescence} rooted at $r_0$ is a subgraph of $J$ so that
\begin{itemize}
\item its connected components not containing $r_0$ are isolated points and
\item its connected component containing $r_0$, called the \emph{root component}, is a tree in which there is a (unique) directed path from $r_0$ to any other vertex.
\end{itemize}
A \emph{spanning arborescence} is an arborescence without isolated points.
\end{defn}

We remark that arborescences may never contain loop edges.

Let now $G$ be a plane bipartite graph so that $G^*$ is directed as explained in the introduction: if $E$ and $V$ are the color classes of $G$, then as we traverse each edge of $G^*$, we see an element of $E$ to our right and an element of $V$ to our left. We may write a directed edge as an ordered pair $(\text{startpoint},\text{endpoint})$, although we should always keep it in mind that multiple edges may exist with the same initial and terminal points. Let us also recall that the vertex set of $G^*$ is identified with the set $R$ of regions of $G$ (i.e., the set of connected components of $S^2\setminus G$).

\begin{lem}\label{lem:connected}
Let $G$ be a plane bipartite graph. There exists a directed path from any vertex of $G^*$ to any other vertex.
\end{lem}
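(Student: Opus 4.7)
My plan is to prove strong connectedness of the directed graph $G^*$ in two stages. First, $G^*$ is connected as an undirected graph (by standard planar duality, using connectedness of $G$). Second, I aim to establish what one might call the \emph{reversal property}: for every directed edge $(r, r')$ of $G^*$, there is also a directed path from $r'$ back to $r$. Given the reversal property, for any two vertices $r_1, r_2$ of $G^*$ one may take an undirected path between them and replace each backward-oriented edge by its directed detour, then extract a simple directed path from the resulting walk.

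The key local fact enabling the reversal property is that the dual edges around each $G$-vertex form a directed cycle. I would fix $v \in V$ and list the regions touching $v$ in counterclockwise order. The dual edge between two consecutive such regions crosses a $G$-edge of the form $vf$ with $f \in E$. Traversing this dual edge in the counterclockwise direction around $v$ places $v$ on the left and $f$ on the right, which is exactly the orientation dictated by the convention. Hence the dual edges around $v$ all point counterclockwise, forming a directed cycle. An analogous argument at any $e \in E$ yields a directed cycle oriented clockwise around $e$.

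To finish, suppose $(r, r')$ is a directed edge of $G^*$ crossing the $G$-edge $ve$ with $v \in V$ and $e \in E$. The regions $r$ and $r'$ are consecutive sectors at $v$, so both lie on the directed cycle around $v$. Following this cycle from $r'$ the long way around $v$ reaches $r$, yielding the required directed path. This establishes the reversal property, and with it the lemma.

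The main obstacle is really just the bookkeeping in the middle paragraph: verifying that the left/right convention for the $G^*$-orientation truly produces counterclockwise rotation around $V$-vertices (and clockwise around $E$-vertices). This depends on a consistent choice of orientation of the plane, but once that local picture is in place, the rest of the argument is immediate.
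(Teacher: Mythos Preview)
Your proof is correct and takes a genuinely different route from the paper's. The paper argues by contradiction: it fixes a vertex $r_0$, lets $R'$ be the set of vertices reachable from $r_0$, and assumes $R'\ne R$. The union of the corresponding regions then has nonempty boundary in $G$; since $G$ is bipartite, each boundary cycle has even length, and along it the dual edges alternate direction, so half of them point from $R'$ into $R\setminus R'$ --- a contradiction. Your argument is instead local and constructive: you observe that the dual edges encircling any vertex of $G$ form a consistently oriented directed cycle (the paper itself notes this fact just after the lemma, in subsection~\ref{ssec:arbo}, when it says the regions of $G^*$ have boundaries oriented either clockwise or counterclockwise), and you use that cycle to reverse any single edge of $G^*$. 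Combined with undirected connectedness of $G^*$, this yields strong connectedness. Your approach has the advantage of exhibiting the reversing path explicitly; the paper's approach is shorter and avoids the orientation bookkeeping you flagged in your last paragraph.

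One small remark: you invoke connectedness of $G$ to conclude that $G^*$ is connected, but the lemma as stated does not assume $G$ is connected (though the surrounding context does). In fact $G^*$ is connected for any plane graph, so this is harmless. Also note that the ``cycle'' around a vertex may revisit regions (if the vertex is a cut vertex of $G$), so it is really a closed directed walk rather than a simple cycle; this does not affect your argument, since a directed walk from $r'$ to $r$ still contains a directed path.
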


\begin{proof}
Assume the contrary, i.e., that there exists a vertex $r_0\in G^*$ so that the set~$R'\subset R$ of vertices that are accessible from $r_0$ with directed paths is not $R$. Then the union of the corresponding (to elements of $R'$) regions of $G$ is not the entire sphere $S^2$ and hence it has non-empty boundary. That boundary is a collection of cycles in $G$ and since $G$ is bipartite, each boundary component consists of at least two edges. It is easy to see that half of the edges along each boundary component are such that the corresponding edge of $G^*$ points from an element of $R'$ to an element of $R\setminus R'$. But that is a contradiction because by definition, the endpoints of these edges of $G^*$ should be in $R'$.
\end{proof}

Fix a root $r_0$ in $G^*$ and an edge $\kappa$ of $G$ so that the dual edge $\kappa^*$ points to $r_0$. We will use a``greedy,'' or depth-first, algorithm to construct a spanning arborescence of $G^*$ determined by these data. In the process, we will select edges one-by-one so that at each stage we have an arborescence of $G^*$.

Let $\varepsilon_{0,1}$ be the first non-loop edge of $G^*$, directed away from $r_0$, that we find as we turn around $r_0$, starting from $\kappa^*$, in the positive (counterclockwise) direction. Let $r_1$ be the endpoint of $\varepsilon_{0,1}$. Now turn counterclockwise around $r_1$, starting from $\varepsilon_{0,1}$, until the first edge $\varepsilon_{1,1}$ is found so that together with $\varepsilon_{0,1}$ they form an arborescence (i.e., $\varepsilon_{1,1}$ is neither a loop nor does it point to $r_0$). Now move to the endpoint $r_2$ of $\varepsilon_{1,1}$ and turn around it counterclockwise, starting from $\varepsilon_{1,1}$, until the next edge is found so that together with the first two, they form an arborescence, and so on.

If at any point in the process we select the edge $\varepsilon_{i,j}=(r_i,r_k)$ but complete a full turn around $r_k$ without finding a suitable next edge, then we move back to $r_i$ and continue turning counterclockwise around it from $\varepsilon_{i,j}$ until an edge $\varepsilon_{i,j+1}$ is found which forms an arborescence with the previously chosen ones. If this does not exist either, then we move back to the starting point $r_l$ of the unique edge~$\varepsilon_{l,m}=(r_l,r_i)$ in our arborescence which ends at $r_i$ and continue searching for a suitable edge~$\varepsilon_{l,m+1}$ by turning counterclockwise around $r_l$, starting from $\varepsilon_{l,m}$, etc.

The edge-selecting algorithm terminates when a full turn has been completed around all vertices of $G^*$ that we visited in the process (including $r_0$). We claim that the final arborescence $A$ is spanning. Indeed, if there was an isolated point~$r$ in $A$ then find a directed path in $G^*$ from $r_0$ to $r$ (cf.\ Lemma \ref{lem:connected}). Tracing this path backward from $r$, the first edge that is in $A$ is preceded by an edge that could be added to $A$ to form a larger arborescence (if $A$ and the path are disjoint, then the same can be said about the first edge along the path). Moreover, when we were turning around the startpoint of this edge, we would have selected it into our arborescence, which is a contradiction.

\begin{defn}
Let $G$ be a connected plane bipartite graph with dual graph~$G^*$. The spanning arborescence of $G^*$ constructed above will be called the \emph{clocked arborescence} (relative to the vertex $r_0$ of $G^*$ and the edge $\kappa$ of $G$). 
\end{defn}

\subsection{The tree of arborescences}\label{ssec:arbo}

We start with a technical digression. We stress that (say, smooth) embeddings of the graphs $G$ and $G^*$ into the sphere~$S^2$ have been fixed. Let $C$ denote the set of intersection points between edges of $G$ and $G^*$. With an abuse of notation, we will also speak of $C$ as the edge set of either $G$ or $G^*$. Likewise, we use the symbol $R$ both for the regions of $G$ and for the vertices of $G^*$. The regions of $G^*$ have their boundary oriented either clockwise or counterclockwise. These two sets of regions are identified with the color classes~$E$ and $V$, respectively, of $G$.

Now, let us fix smooth arcs connecting each vertex $r\in R$ to the vertices of $G$ that lie along the boundary of the region $r$ (but otherwise avoiding $G$ and $G^*$). Together with $G$ and $G^*$, these arcs form a triangulation $T_R$ of $S^2$. Indeed, the set of $0$-cells is $E\cup V\cup R\cup C$ and the $1$-cells are the arcs above along with the half-edges of $G$ and $G^*$ emanating from elements of $C$. Let us also fix a barycentric subdivision $\mathscr B$ of $T_R$.

\begin{defn}\label{def:regnbhd} 
The \emph{regular neighborhood} $N_A$ of an arborescence $A$ in $G^*$ is the union of those (closed) $2$-cells of $\mathscr B$ that have a common point with the root component of $A$. 
\end{defn}

In combinatorial topology, a regular neighborhood is usually defined using a second barycentric subdivision. However for our purposes, Definition \ref{def:regnbhd} will suffice and so we will use it to avoid unnecessary complication.

Next, we shall describe an algorithm that enumerates all arborescences of $G^*$ rooted at $r_0$ and arranges them in a binary tree $\mathscr A$. The construction will depend on the same edge $\kappa$ of $G$ as above. The nodes of the binary tree will actually be pairs $(\text{arborescence},\text{set of skipped edges})$ so that
the skipped edges are edges of $G^*$, not in the arborescence, each of which has its startpoint in the root component. The nodes of $\mathscr A$ have either no descendant or exactly two, which we will refer to as the right and left descendants.

Our first arborescence, the root of the binary tree, is the one with no edges and no skipped edges. 
The right descendant of the root has the unique edge $\varepsilon_{0,1}=(r_0,r_1)$ that appeared in the construction of the clocked arborescence. It has no skipped edges. 
The left descendant of the root still has no edges, 
but it has the skipped edge $\varepsilon_{0,1}$.
The following is the general description of our process.

\begin{defn}\label{def:descend}
If $(A,S)$ has already been constructed as a node of the tree $\mathscr A$, then let $N_A$ denote the regular neighborhood of $A$.
Let $k$ be the intersection point of the boundary $\partial N_A$ and the edge $\kappa^*$ (which can never be in $A$, hence $k$ exists). Now, let us move counterclockwise around $\partial N_A$ starting from $k$ 
until we reach the first edge $\delta$ of $G^*$ that is not in $S$ and which is such that $A\cup\{\delta\}$ is an arborescence. We will refer to $\delta$ as the \emph{augmenting edge} of $(A,S)$. If such a $\delta$ does not exist, then the node $(A,S)$ will have no descendants in $\mathscr A$. Otherwise, let the \emph{right descendant} of $(A,S)$ be $(A\cup\{\delta\},S)$ and let its \emph{left descendant} be $(A,S\cup\{\delta\})$.
\end{defn}

\begin{ex}\label{ex:arb_tree}
\begin{figure}[htbp] 
\labellist
\small
\pinlabel {type I} at 980 1019
\pinlabel {type I} at 980 227
\pinlabel {type I} at 1376 623
\pinlabel {type II} at 1376 227
\pinlabel {type II} at 584 227
\pinlabel {type II} at 188 227
\endlabellist
   \centering
   \includegraphics[width=4in]{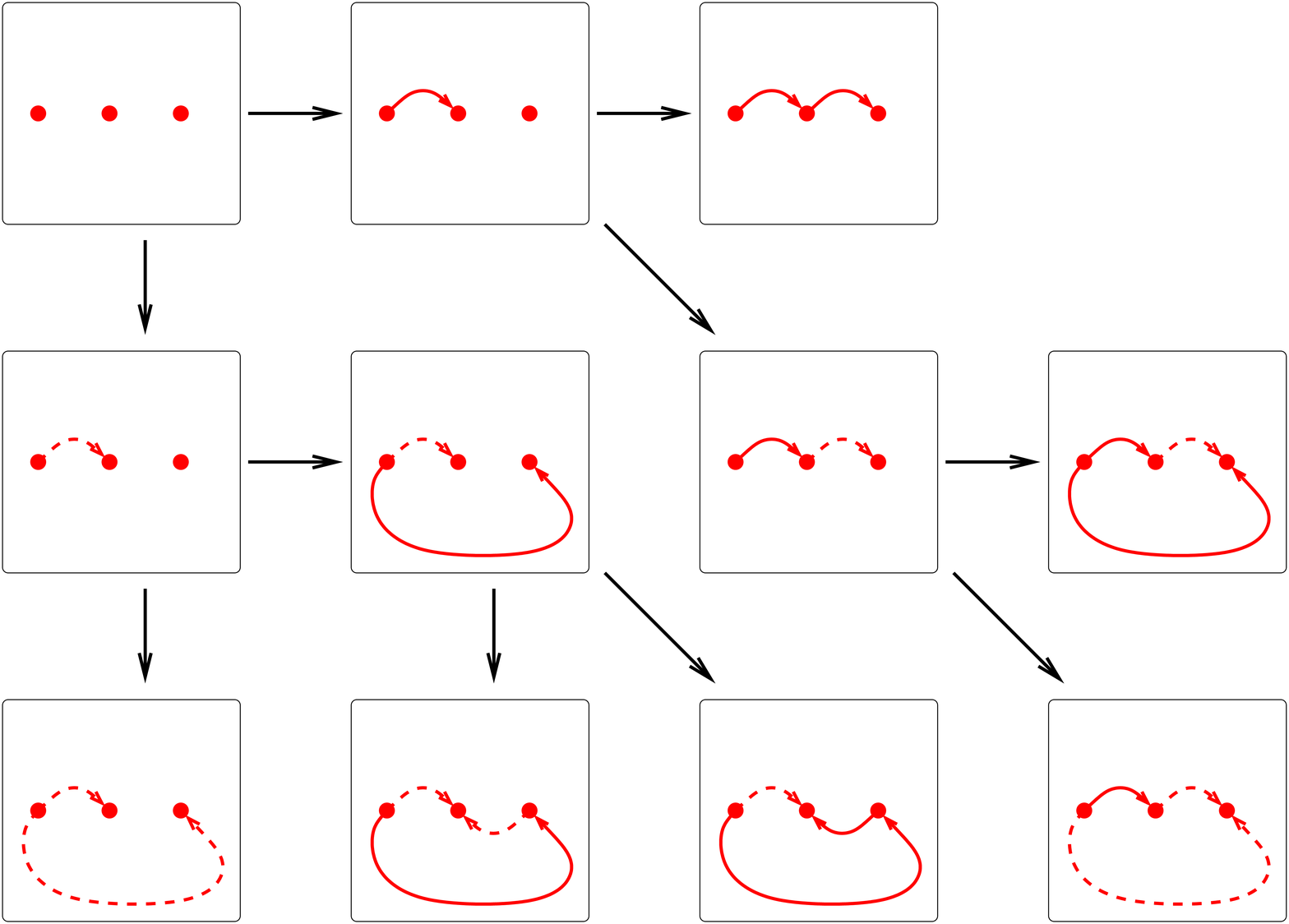} 
   \caption{The tree of arborescences $\mathscr A$ for the directed graph in Figure \ref{fig:K32}. For each node $(A,S)$, solid edges represent $A$ and dotted edges represent $S$.}
   \label{fig:arb_tree}
\end{figure}

Figure \ref{fig:arb_tree} shows the tree of arborescences for the complete bipartite graph $G=K_{3,2}$. The embedding of $G$ (and $G^*$), the root $r_0$, and the edge $\kappa$ are shown in Figure~\ref{fig:K32}.
\end{ex}

Notice that along any path in $\mathscr A$ that starts from the root, both the arborescences and the sets of skipped edges form an increasing sequence. It is easy to see that the rightmost branch of $\mathscr A$, i.e., the path when we always pass to the right descendant, leads to the clocked arborescence (with no skipped edges). As to other potential terminal points of such paths, we make the following observation.

\begin{lem}
The node $(A,S)$ of $\mathscr A$ has no descendants if and only if either
\begin{enumerate}[leftmargin=30pt,label=\Roman*.]
\item $A$ is a spanning arborescence or
\item the set $R'$ of vertices in the root component of $A$ is a proper subset of $R$ but all edges of $G^*$ from an element of $R'$ to an element of $R\setminus R'$ belong to $S$.
\end{enumerate}
\end{lem}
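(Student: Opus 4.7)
The plan is to reformulate ``no descendants'' as a clean combinatorial condition on $R'$ and $S$, then verify it holds exactly in cases (I) and (II). First I would characterize which directed edges can serve as augmenting edges. Starting directly from the definition of arborescence, a non-loop edge $\delta=(r,r')$ of $G^*$ with $\delta\notin A$ makes $A\cup\{\delta\}$ an arborescence if and only if $r\in R'$ and $r'\in R\setminus R'$. Indeed, if $r'\in R'$ then the concatenation of the existing directed $r_0$-path to $r'$ with the new $r_0\to\cdots\to r\to r'$ path forces an undirected cycle in the root component; if $r\notin R'$ then $r$ becomes non-isolated after adding $\delta$ but is still unreachable from $r_0$ by a directed path. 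Conversely, when $r\in R'$ and $r'\in R\setminus R'$, appending $\delta$ to the $r_0\to r$ path in $A$ exhibits the required directed path to $r'$, and $r'$ being previously isolated precludes any new cycle.

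Second, I would verify that the counterclockwise walk around $\partial N_A$ really does encounter every such candidate $\delta$. Since $r\in R'$ is an interior point of $N_A$ (the regular neighborhood of the root component contains all of its vertices in its interior), and since $\delta\notin A$ cannot lie entirely inside $N_A$, the edge $\delta$ must exit $N_A$ near $r$ and so it crosses $\partial N_A$. Hence a complete traversal of $\partial N_A$ inspects every edge of $G^*$ from $R'$ to $R\setminus R'$, and Definition \ref{def:descend} collapses to the statement: $(A,S)$ has no descendants if and only if every edge of $G^*$ with tail in $R'$ and head in $R\setminus R'$ lies in $S$. (Note that no such edge can belong to $A$, since the vertices of $R\setminus R'$ are isolated in $A$.)

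With this reformulation, the case split is immediate. If $R'=R$, the condition is vacuous, and $R'=R$ means every vertex of $G^*$ lies in the root component of $A$, so $A$ is a spanning arborescence — case (I). If $R'\subsetneq R$, then by Lemma \ref{lem:connected}, for any $r\in R\setminus R'$ there is a directed path in $G^*$ from $r_0$ to $r$; the first edge along this path that leaves $R'$ witnesses the existence of at least one edge from $R'$ to $R\setminus R'$, and the reformulated condition is exactly case (II). The converse is trivial: in case (I) there are no vertices outside $R'$ so no candidate, and in case (II) every candidate is already in $S$.

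The main obstacle I expect is the topological step in the second paragraph: making precise that the single counterclockwise walk around $\partial N_A$ genuinely sees every edge of $G^*$ from $R'$ to $R\setminus R'$, even though such an edge may exit and re-enter $N_A$ multiple times if $R'\ne R$ is a proper subset. The geometric input we need is that $N_A$ is a closed regular neighborhood of the root component, so each $r\in R'$ lies in its interior and every edge issuing from $r$ that is not already part of $A$ must cross $\partial N_A$ at least once in any small neighborhood of $r$.
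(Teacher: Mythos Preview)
Your proposal is correct and follows essentially the same approach as the paper's proof. Both arguments hinge on the observation that any edge of $G^*$ from the root component $R'$ to an isolated vertex must cross $\partial N_A$ and is therefore detected by the process of Definition~\ref{def:descend}; the paper states this in one sentence, while you spell out the full characterization of augmenting edges and flag the topological step as the only nontrivial point, which is exactly right.
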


With regard to the above, we will speak of \emph{type I} and \emph{type II leaves} of $\mathscr A$.

\begin{proof}
A spanning arborescence is also a spanning tree of $G^*$ and hence it contains exactly $|R|-1$ edges. Other arborescences have fewer edges. Consequently, spanning arborescences can not be extended as arborescences and so cannot have descendants in $\mathscr A$. If $(A,S)$ fits the second description then it has no augmenting edge and hence no descendants, either.

As to the converse, let now $(A,S)$ be so that $A$ is not spanning
and let $\alpha$ be an edge from the root component to an isolated point of $A$. Such an edge 
necessarily intersects the boundary $\partial N_A$ of the regular neighborhood of $A$ and hence it will be detected by the process described in Definition \ref{def:descend}. Thus if $(A,S)$ has no descendants in $\mathscr A$ then any possible $\alpha$ belongs to $S$.
\end{proof}

\begin{lem}\label{lem:allarboappear}
All spanning arborescences of $G^*$ appear at a unique node (a type I leaf) of $\mathscr A$.
\end{lem}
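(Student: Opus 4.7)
The plan is to trace, for any given spanning arborescence $A^*$ of $G^*$, a path in $\mathscr{A}$ starting from the root, where the choice of descendant at each node is dictated by $A^*$, and then to verify that this path terminates at a type I leaf whose arborescence component equals $A^*$.

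More precisely, I would proceed inductively. The root $(\emptyset,\emptyset)$ of $\mathscr{A}$ trivially satisfies $\emptyset \subseteq A^*$ and $\emptyset \cap A^* = \emptyset$. Suppose we have already reached a node $(A,S)$ on our path with $A \subseteq A^*$ and $S \cap A^* = \emptyset$. If $A = A^*$, we stop; otherwise, I claim the augmenting edge $\delta$ exists and we choose the right descendant if $\delta \in A^*$, and the left descendant if $\delta \notin A^*$. In the first case the next node is $(A \cup \{\delta\}, S)$, and we still have $A \cup \{\delta\} \subseteq A^*$ and $S \cap A^* = \emptyset$. In the second case the next node is $(A, S \cup \{\delta\})$, and now $S \cup \{\delta\}$ remains disjoint from $A^*$ because $\delta \notin A^*$. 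Thus the invariants are preserved.

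The main step is to show that whenever $A \subsetneq A^*$, the augmenting edge $\delta$ exists at $(A,S)$. Let $R' \subseteq R$ be the vertex set of the root component of $A$. Since $A$ is not yet spanning, $R' \neq R$; since $A^*$ is a spanning arborescence, it contains a directed path from $r_0$ to any vertex outside $R'$, and so there is an edge $e \in A^*$ from some element of $R'$ to some element of $R \setminus R'$. Such an edge $e$ satisfies $e \notin S$ (by the invariant $S \cap A^* = \emptyset$), and $A \cup \{e\}$ is again an arborescence (the new endpoint was previously isolated in $A$). Moreover, because $e$ exits the root component of $A$, its interior must cross the boundary $\partial N_A$ of the regular neighborhood. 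Hence when we traverse $\partial N_A$ counterclockwise from $k$, we encounter at least one viable candidate, so the augmenting edge $\delta$ is defined. Consequently the algorithm cannot halt while $A \subsetneq A^*$, and since each step strictly enlarges $A$ or $S$ (and there are only finitely many edges), the path must terminate at a node $(A^*, S)$, which is necessarily a type I leaf.

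Uniqueness follows from the fact that each decision along this path is forced: if $A^*$ is reached at some leaf $(A^*, S')$, then along the unique path from the root to that leaf, every intermediate node $(A, S)$ must satisfy $A \subseteq A^*$ (because no edge outside $A^*$ is ever added to the arborescence component) and $S \cap A^* = \emptyset$ (because every edge added to $S$ was the augmenting edge at a node from which the left branch was taken). At any such node, whether we go right or left is entirely determined by whether $\delta \in A^*$, matching the path constructed above. The step I expect to require the most care is the geometric argument that the edge $e$ of $A^*$ leaving the root component is actually swept out by the counterclockwise traversal of $\partial N_A$ starting from $k$, which requires invoking the definition of the regular neighborhood and the fact that $e$ has an endpoint in $R \setminus R'$; but this is essentially the same observation already used to prove Lemma~\ref{lem:connected} and to justify that the clocked arborescence is spanning.
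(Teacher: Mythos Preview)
Your proposal is correct and follows essentially the same approach as the paper's proof: construct the path by going right when the augmenting edge lies in the target arborescence and left otherwise, then show the path cannot terminate at a type~II leaf because an edge of $A^*$ leaving the root component would still be a viable (non-skipped) candidate. The uniqueness argument is also the same in spirit; the paper phrases it as a divergence argument (find the last common node of two putative paths and derive a contradiction), whereas you phrase it via invariants, but these are equivalent. One small point: your justification for $S\cap A^*=\emptyset$ in the uniqueness paragraph is a bit elliptical---the key observation is that once an edge enters $S$ it can never later serve as an augmenting edge (Definition~\ref{def:descend} requires $\delta\notin S$), hence never enters the arborescence component and so cannot lie in~$A^*$.
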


\begin{proof}
Let $A$ be a spanning arborescence of $G$. We will construct a path in $\mathscr A$ that starts from the root and ends at $(A,S)$ for an appropriate $S$. Assume that the path has already been constructed until the node $(A',S')$ and let $\delta$ be the augmenting edge of $(A',S')$. If $\delta$ is an edge in $A$, then continue the path to the right descendant~$(A'\cup\{\delta\},S')$. Otherwise, move to the left descendant $(A',S'\cup\{\delta\})$. 

We claim that the last node $(\widetilde A,\widetilde S)$ along our path cannot be a leaf of type~II. Since $\widetilde A\subset A$ by construction, if $\widetilde A\ne A$, then there has to be an edge of $A$ going from a vertex of the root component of $\widetilde A$ to an isolated point of $\widetilde A$. As our process never skips edges of $A$, this edge cannot be in $\widetilde S$ either, but that contradicts the definition of a type II leaf. Hence $(\widetilde A,\widetilde S)$ is a type I leaf which implies $\widetilde A=A$.

Regarding uniqueness, assume that there exists another path $P$ in $\mathscr A$ from the root to a leaf involving $A$. Let $(A',S')$ be the node where the two paths ($P$ and the one constructed above) separate. Since $(A',S')$ cannot be a leaf, it has an augmenting edge $\delta$. Now the next node $(A'',S'')$ along $P$ after $(A',S')$ is such that either 
\begin{itemize}
\item $\delta\not\in A$ but $\delta\in A''$ (if $P$ takes a step to the right at $(A',S')$) or
\item $\delta\in A$ but $\delta\in S''$ (if $P$ takes a step to the left).
\end{itemize}
As both scenarios prevent the endpoint of $P$ from involving $A$, we have a contradiction and the proof is complete.
\end{proof}


If the arborescence $A$ is not spanning then it may appear at multiple nodes of $\mathscr A$, as the example of the root and its left descendant already shows. 

%

\section{The computation tree}\label{sec:comptree}

In this section we describe two more incarnations of the binary tree $\mathscr A$ and we spell out their relation to the top of the HOMFLY polynomial $P_{L_G}$.

It is a straightforward matter to transform the tree $\mathscr A$ of arborescences in $G^*$ into the isomorphic tree $\mathscr G$ of (decorated) subgraphs of $G$. If $(A,S)$ is a node in $\mathscr A$, then replace it with the dual subgraph $A^*$ of $G$. The edges of $G$ that are dual to elements of $S$ are in $A^*$ and we will refer to them as the \emph{dotted edges}.

The tree structures of $\mathscr A$ and $\mathscr G$ are the same, that is, Definition \ref{def:descend} serves to describe $\mathscr G$ as well. However it is useful to translate that description to subgraph terms. Recall that $G$ is embedded in $S^2$. Let us refer to the region of $G$ marked with $r_0$ as the `initial outside region.' At each stage of the process, we look for an edge of $G$ to `puncture' (i.e., remove) so that the outside region grows larger. To find such an edge, we travel around the boundary of the outside region in the counterclockwise direction, starting from $\kappa$, and we select the first non-dotted edge~$\delta^*$ which is aligned so that its endpoint in $E$ is encountered first. It is also required that $\delta^*$ be adjacent to the outside region on one side only, so that after puncturing it, the subgraph remains connected and the outside region remains simply connected. In the right descendant of the subgraph, $\delta^*$ is removed. In the left descendant, the subgraph is the same but $\delta^*$ becomes dotted, so that it can not be punctured anymore along the current branch of $\mathscr G$. 

A (decorated) subgraph is a leaf of $\mathscr G$ if no suitable $\delta^*$ can be found. That can happen in two ways. Either the only region of the subgraph is the outside region, i.e., the subgraph is a tree -- these are the type I leaves. Or else, the closure of the outside region is not the entire sphere $S^2$ but along each boundary component of the closure, dotted and non-dotted edges alternate. In this case, that is in the case of a type II leaf, we say that the subgraph has an \emph{alternating contour}.

Finally, in order to turn $\mathscr G$ into the computation tree $\mathscr T$, we replace each subgraph with an oriented link diagram using the median construction. That is, given a node of $\mathscr G$, we consider a regular neighborhood of the corresponding subgraph and we give a half-twist to that surface over each edge. The half-twist is positive for non-dotted edges and negative for dotted ones. The link diagram is the oriented boundary of the surface so constructed. In particular, the link diagram at the root of $\mathscr T$ is $L_G$. In terms of these diagrams, passing to a right descendant means \emph{smoothing} a crossing and passing to a left descendant is equivalent to \emph{changing} a crossing (from positive to negative).

Smoothing and changing of crossings play a crucial role in the definition of the HOMFLY polynomial $P(v,z)$, as we explained on page \pageref{skeinstuff} of the Introduction. Let us quote the following well known result.

\begin{thm}[Morton \cite{morton}]\label{thm:morton}
If an oriented link diagram contains $n$ crossings and $s$ Seifert circles, then in any term of the corresponding HOMFLY polynomial, the exponent of $z$ is at most $n-s+1$.
\end{thm}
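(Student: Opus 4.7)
The plan is a double induction: a primary induction on $n$, the number of crossings of $D$, and a secondary induction on $u(D)$, defined as the minimum number of crossings whose signs must be switched in order to render $D$ descending (with respect to any fixed choice of basepoint on each component of the link). I would order pairs $(n, u)$ lexicographically.

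The base case $n = 0$ is immediate: such a diagram is a union of $s$ disjoint simple closed curves representing an $s$-component unlink, whose HOMFLY polynomial is $\bigl((v^{-1}-v)/z\bigr)^{s-1}$, and its $z$-degree is exactly $1 - s = n - s + 1$.

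For the inductive step with $n > 0$ and $u(D) > 0$, pick a crossing $c$ whose switch strictly decreases $u$ and apply the skein relation \eqref{skeinstuff} at $c$. The smoothed diagram $D_0$ has $n - 1$ crossings and the \emph{same} $s$ Seifert circles, since the Seifert partition of the plane is determined by smoothing all crossings simultaneously and is thus unaffected by one additional local smoothing. The primary hypothesis then gives $\deg_z P_{D_0} \le n - s$, and the factor $vz$ in the skein relation raises this to at most $n - s + 1$. The crossing-switched diagram $D'$ has the same $n$ and $s$ as $D$ but $u(D') < u(D)$, so by the secondary hypothesis $\deg_z P_{D'} \le n - s + 1$, a bound preserved by the $v^{\pm 2}$ coefficient. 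Combining the two estimates yields $\deg_z P_D \le n - s + 1$.

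For the inductive step with $n > 0$ and $u(D) = 0$, the diagram $D$ is already descending and hence represents a $k$-component unlink, so $P_D = \bigl((v^{-1}-v)/z\bigr)^{k-1}$ has $z$-degree $1 - k$; it remains to verify $1 - k \le n - s + 1$, i.e.\ $s \le n + k$. For a connected diagram this follows from the Euler-characteristic identity $s - n = 2 - 2g - k$ for the associated orientable Seifert surface, combined with $g \ge 0$, which together give $s \le n + 2 - k \le n + k$ as soon as $k \ge 1$; for a disconnected diagram, sum this inequality over connected pieces. I expect this last case---the base of the secondary induction---to be the main subtlety, since it mixes a Seifert-genus inequality with the split-union behavior of the unlink's HOMFLY polynomial and requires tracking the number of diagrammatic components carefully.
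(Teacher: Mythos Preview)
The paper does not give its own proof of this statement: Theorem~\ref{thm:morton} is quoted from Morton's paper \cite{morton} and used as a black box throughout. So there is no ``paper's proof'' to compare against.

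Your argument is correct and is essentially the standard proof (indeed, it is close to Morton's original). A few remarks for polish. In the inductive step with $u(D)>0$, you should spell out that a crossing which strictly decreases $u$ exists: fix a basepoint/ordering realizing $u(D)$, and switch any one of the $u(D)$ bad crossings; with the \emph{same} basepoint the new diagram needs only $u(D)-1$ switches, so $u(D')\le u(D)-1$. In the $u(D)=0$ case, your Euler-characteristic computation is fine, but note that what you actually use is $s-n=\chi(F)=2-2g-k$ for the Seifert surface $F$ of a \emph{connected} diagram, hence $s=n+2-2g-k\le n+2-k\le n+k$ since $k\ge1$; then the disconnected case follows by summing. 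Finally, the skein relation \eqref{skeinstuff} has two forms depending on the sign of the crossing being switched; in both, $P_D$ is $v^{\pm2}P_{D'}\pm v^{\pm1}zP_{D_0}$, so your degree estimate goes through uniformly.
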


Recall that homogeneous links \cite{cromwell} are built from special alternating ones using the operation called `star product.'
Morton's estimate above is well known to be sharp for homogeneous link diagrams. This claim also follows from our main result on special alternating links (as stated in Theorems \ref{thm:homfly=h} and \ref{thm:homfly=park}, although the statement in Theorem \ref{thm:treetop} suffices as well) and the following fact.

\begin{thm}[Murasugi--Przytyczki \cite{mp}]\label{thm:mp}
Let $D_1$ and $D_2$ be oriented link diagrams so that they have $n_1$ and $n_2$ crossings, respectively, as well as $s_1$ and $s_2$ Seifert circles, respectively. Let us form a star product $D_1\star D_2$ of $D_1$ and $D_2$ (along any pair of type I Seifert circles and using any arrangement of the crossings along the new type II Seifert circle). Then $D_1\star D_2$ has $n_1+n_2$ crossings and $s_1+s_2-1$ Seifert circles, and the coefficient of $z^{n_1+n_2-(s_1+s_2-1)+1}$ in $P_{D_1\star D_2}(v,z)$ is the product of the coefficients of $z^{n_1-s_1+1}$ in $P_{D_1}(v,z)$ and that of $z^{n_2-s_2+1}$ in $P_{D_2}(v,z)$.
\end{thm}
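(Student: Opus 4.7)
The first task is to check the combinatorial counts. A star product creates no new crossings and destroys none, since the operation only identifies one type I Seifert circle of $D_1$ with one of $D_2$ and arranges existing crossings along the resulting common boundary; hence $n(D_1 \star D_2) = n_1 + n_2$. The two glued type I circles coalesce into a single type II circle while all other Seifert circles survive unchanged, so $s(D_1 \star D_2) = s_1 + s_2 - 1$. Arithmetic then gives
\[
n - s + 1 = (n_1 - s_1 + 1) + (n_2 - s_2 + 1),
\]
so the target $z$-exponent in $P_{D_1 \star D_2}$ equals the sum of the target $z$-exponents in the two factors, which is consistent with Morton's estimate (Theorem~\ref{thm:morton}) on all three diagrams.

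For the multiplicativity of the top coefficient I would induct on $n_1 + n_2$, using the skein relation~\eqref{skeinstuff} to peel off one crossing $c$ of $D_1$ inside $D_1 \star D_2$ at a time. The crossing change at $c$ yields $D_1^- \star D_2$, preserving $n$ and $s$; the smoothing at $c$ yields $(D_1)_0 \star D_2$ with one fewer crossing and with the Seifert circle count of the $D_1$ factor adjusted by $\pm 1$. By induction the coefficient of top $z$-degree in each resolution factors as a product with the top $z$-coefficient of $P_{D_2}$, and collecting the two contributions as $v^2\cdot[\text{top after change}] + vz\cdot[\text{top after smoothing}]$ exactly reproduces the skein expansion of $P_{D_1}$ at $c$ restricted to its top $z$-degree. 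The base case, in which one factor has no crossings and is therefore an unknot diagram with HOMFLY polynomial $1$ while $D_1 \star D_2$ is $D_2$ after obvious Reidemeister moves, is immediate.

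The principal obstacle is ensuring that the star product structure is honestly preserved under skein resolutions near the merged boundary, together with a careful tracking of Seifert circle counts. When $c$ is incident to the type II Seifert circle created by the star product, smoothing can in principle scramble how that circle interacts with its neighbors, and one must argue that the resulting diagram again admits a star product decomposition with the predicted parameters; the freedom to rearrange crossings along a type II circle, explicitly built into the statement of the theorem, is precisely what one uses to reduce the various cases to a common form. A secondary subtlety is that intermediate diagrams in the induction need not be homogeneous, so Morton's bound need not be sharp on them; one must therefore verify that whenever Morton's inequality becomes strict on one side of a skein expansion, the corresponding contribution is either genuinely absent or is compensated on the other side, so that only the leading $z$-exponent terms survive in the final tally.
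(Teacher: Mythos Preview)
The paper does not prove Theorem~\ref{thm:mp}; it is quoted as a known result of Murasugi and Przytycki~\cite{mp} and used as input for the rest of the paper. So there is no ``paper's own proof'' to compare against here.

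As for your sketch itself, the strategy is reasonable in spirit but has two concrete defects. First, your induction variable $n_1+n_2$ does not decrease under a crossing change, so the step ``by induction the coefficient of top $z$-degree in each resolution factors\ldots'' is circular for the $D_1^-\star D_2$ branch. A standard fix is to induct on a pair such as (number of crossings, number of ``bad'' crossings relative to a descending order), or to build a full resolution tree for $D_1$ whose leaves are crossingless and then transplant that tree into $D_1\star D_2$; either way you need a quantity that drops on \emph{both} branches of the skein relation. Second, smoothing a crossing in the orientation-preserving way never changes the Seifert circle count (the Seifert circles are obtained by smoothing \emph{all} crossings, so smoothing one first is invisible), so your remark that $s$ is ``adjusted by $\pm1$'' is wrong; fortunately the arithmetic you need, namely that the top $z$-exponent drops by exactly one under smoothing, still holds. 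The obstacles you flag in your last paragraph---preservation of the star-product structure under resolutions at the type~II circle, and the possibility that Morton's bound is slack on intermediate diagrams---are genuine and would need to be handled; you have identified them but not dispatched them.
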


\begin{defn}
For a link $L$ that can be presented with a homogeneous diagram with $n$ crossings and $s$ Seifert circles, the \emph{top of the HOMFLY polynomial} is the polynomial in $v$ that is the coefficient of $z^{n-s+1}$ in $P_{L}(v,z)$.
\end{defn}

When we combine the next theorem with Theorem \ref{thm:mp}, it follows that for any homogeneous link, all coefficients in the top of the HOMFLY polynomial have the same sign. In particular, they do not cancel when we pass to the Alexander polynomial $\Delta(t)=P(1,t^{1/2}-t^{-1/2})$, rather their sum becomes the leading coefficient in $\Delta$. 


\begin{thm}\label{thm:treetop}
For a connected plane bipartite graph $G$ of $s$ vertices and $n$ edges, the top of the HOMFLY polynomial~$P_{L_G}$ is expressed in terms of the computation tree $\mathscr T$ as the sum to which each type~I leaf with $k$ negative crossings (which appear as skipped edges in $\mathscr A$ and as dotted edges in $\mathscr G$), contributes the monomial $v^{n-s+1+2k}$.
\end{thm}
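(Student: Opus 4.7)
The plan is to process the computation tree $\mathscr T$ from the root downward, repeatedly applying the skein relation \eqref{skeinstuff}. The right descendant of a node corresponds to smoothing a crossing (picking up the factor $vz$) and the left descendant corresponds to changing a crossing (picking up the factor $v^2$). Consequently, a leaf reached via $a$ right steps and $k$ left steps contributes $v^{2k}(vz)^a P_{\mathrm{leaf}}(v,z)=v^{a+2k}z^a P_{\mathrm{leaf}}(v,z)$ to $P_{L_G}(v,z)$.

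For a type I leaf, the arborescence $A$ is spanning, so $a=|R|-1$, and by Euler's formula $|R|=n-s+2$, giving $a=n-s+1$. Planar duality identifies $A^*$ with a spanning tree of $G$, having $s-1$ edges in total, $k$ of which are dotted. The next step is to observe that the median construction applied to a tree yields an unknot diagram regardless of sign assignments on the edges: the regular neighborhood of the tree is a disk, so the boundary link is the unknot, and concretely each leaf edge appears as a kink that can be removed by a Reidemeister I move irrespective of its sign. Thus $P_{\mathrm{leaf}}(v,z)=1$, and the leaf contributes $v^{n-s+1+2k}z^{n-s+1}$, whose coefficient of $z^{n-s+1}$ is exactly the monomial $v^{n-s+1+2k}$ predicted by the theorem.

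The main obstacle is to show that type II leaves contribute nothing to the coefficient of $z^{n-s+1}$. At such a leaf, $a<n-s+1$ and $A^*$ has the alternating contour property: along each boundary circuit of its outer region the dotted and non-dotted edges alternate. The corresponding diagram has $n-a$ crossings and $s$ Seifert circles, so Morton's Theorem~\ref{thm:morton} bounds $\deg_z P_{\mathrm{leaf}}$ by $(n-a)-s+1$, giving only a non-strict bound $a+(n-a)-s+1=n-s+1$ on the total $z$-exponent. To convert this into strict inequality I would exploit the alternating contour: each pair of consecutive opposite-sign edges along a boundary of the outer region corresponds to two adjacent crossings of opposite sign meeting along a common Seifert circle arc, which may be cancelled by a Reidemeister II move. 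Iterating these cancellations yields an equivalent diagram with strictly fewer crossings and the same number of Seifert circles, forcing $\deg_z P_{\mathrm{leaf}}<(n-a)-s+1$ and hence no contribution to $z^{n-s+1}$. The detailed justification of this cancellation is precisely the proposition that the authors defer to Section~\ref{hitoshi}.

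Summing contributions over all leaves of $\mathscr T$ then leaves exactly the sum over type I leaves, and one reads off the stated formula for the coefficient of $z^{n-s+1}$ in $P_{L_G}$.
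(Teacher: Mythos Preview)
Your overall architecture matches the paper's: expand $P_{L_G}$ over the leaves of $\mathscr T$ with edge weights $vz$ and $v^2$, compute the type~I contributions exactly, and argue that type~II leaves miss the coefficient of $z^{n-s+1}$. Your treatment of type~I leaves is correct and essentially the paper's (the Euler-formula count $a=|R|-1=n-s+1$ is a pleasant variant of their edge count).

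The gap is in your handling of type~II leaves. You assert that along the alternating contour, each consecutive pair of opposite-sign crossings ``may be cancelled by a Reidemeister~II move.'' This is false in general. Two crossings admit a Reidemeister~II cancellation only when the \emph{same two strands} cross twice, forming a bigon. Along the alternating contour (Figure~\ref{fig:outer_arcs}) adjacent crossings share exactly one arc: an outer over-arc meets two \emph{different} under-arcs at its two ends, and likewise each under-arc meets two different over-arcs. So no bigon is present and no Reidemeister~II is available. Consequently your ``iterating these cancellations'' step does not get off the ground, and you are left with only Morton's non-strict bound, which is not enough.

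This is also not what Proposition~\ref{biglemma} does. The paper's argument in Section~\ref{hitoshi} is substantially harder: it runs a primary induction on the number of crossings inside the contour, using the skein relation to trade crossing changes for smoothings, and a secondary induction on the number $k$ of outer over-arcs. Along the way it empties monogons via sequences of Reidemeister~I, cyclic and noncyclic~II, and noncyclic~III moves (Lemmas~\ref{lem:monogon} and~\ref{lem:Delta}), and finally pulls an arc $\alpha$ outside the contour to reduce $k$. None of this is a simple Reidemeister~II cancellation along the contour. The paper also needs an extra reduction (connected-sum/multiplicativity) to pass from a general type~II leaf to the configuration of Proposition~\ref{biglemma}, which your sketch omits. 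So while your high-level plan is right, the mechanism you propose for the type~II estimate does not work and is not the one the paper uses.
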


\begin{proof}
If we label each edge of $\mathscr T$ connecting a node to its right descendant with $vz$ and each edge leading to a left descendant with $v^2$, then $\mathscr T$ can be used to compute the HOMFLY polynomial associated to the root as the sum of the following terms: For each leaf, take the HOMFLY polynomial of the corresponding link and multiply it with the product of the edge labels along the unique path between the leaf and the root. 

Type I leaves, where the corresponding subgraph is a spanning tree of $G$, are diagrams of the unknot. Therefore, since $P_{\includegraphics[totalheight=8pt]{circle.eps}}=1$, type I leaves contribute a single monomial, namely the product of the appropriate edge labels. Each spanning tree contains $s-1$ edges. So in order to reach a type I leaf, one needs to remove $n-s+1$ edges, that is, on the way from the root to the leaf, one has to take a step to the right exactly $n-s+1$ times. If the link diagram at the leaf contains $k$ negative crossings, that means that we took a step to the left $k$ times. Hence the contribution to the HOMFLY polynomial of the leaf is $(vz)^{n-s+1}(v^2)^k=v^{n-s+2k+1}z^{n-s+1}$. 
Note that the exponent of $z$ here is the maximum allowed by Theorem \ref{thm:morton}.

The Theorem will obviously follow once we make sure that the type II leaves of the computation tree do not contribute to the top of $P_{L_G}$. This is a consequence of Proposition \ref{biglemma} below by the following remarks.

Along any path from the root of $\mathscr T$ to a node in the tree, the exponent of $z$ in the product of the corresponding edge labels is the number of steps taken to the right. That number agrees with the amount $n-n'$ by which the number of crossings decreased along the path. The number $s$ of Seifert circles is constant throughout $\mathscr T$. Hence in order for the node to contribute terms containing $z^{n-s+1}$ to $P_{L_G}$, the HOMFLY polynomial associated to the node has to contain terms with $z^{n'-s+1}$ in them. The estimate in Proposition \ref{biglemma} rules that out for type II leaves.

The Proposition below describes some characteristics of the link diagrams that arise after (in the description of $\mathscr G$ at the beginning of this section) an alternating contour has been achieved. There is however an extra assumption in the Proposition, namely that the outside region at this stage does not touch itself over an edge of $G$. General type~II leaves of $\mathscr T$ are obtained by connecting diagrams described in the Proposition in a tree-like fashion. Here by connecting, we mean joining the corresponding (embedded) Seifert graphs by paths of edges (some of which may be dotted). In terms of link diagrams, that translates to joining by a sequence of ($0$ or more) bigons, which is just a complicated way of taking a connected sum. Because the HOMFLY polynomial is multiplicative under connected sums, the estimate discussed in the previous paragraph follows for general type II leaves from the claim below by a short inductive argument (it even gets stronger as the number of components in the alternating contour increases).
\end{proof}

\begin{prop}\label{biglemma}
Let the oriented link diagram $D$ contain $n$ crossings and $s$ Seifert circles. Assume that there exists a region $r$ in the complement $S^2\setminus D$ with the following property: There are at least two arcs of $D$ bounding $r$ and they are alternately oriented clockwise and counterclockwise, so that at each crossing along $\partial r$, the counterclockwise arc passes over the clockwise one. We also assume that near each crossing along $\partial r$, only one quadrant formed by $D$ belongs to $r$. (See Figure~\ref{fig:outer_arcs}.) Then, in any term of the corresponding HOMFLY polynomial $P_D(v,z)$, the exponent of $z$ is at most $n-s-1$.
\end{prop}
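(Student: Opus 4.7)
My approach is induction on the number $n$ of crossings, applying the HOMFLY skein relation at crossings along $\partial r$. A preliminary geometric observation drives the argument: the crossings along $\partial r$ must have alternating signs. Walking counterclockwise around $\partial r$ (with $r$ on the left), one alternately enters a crossing along the counterclockwise arc (with its orientation) and exits along the clockwise arc (against its orientation), or the reverse; these two ``corner types'' alternate as one traverses the boundary. The hypothesis that the counterclockwise arc is always the overstrand then pins down the two types to produce opposite crossing signs. Consequently, the number $k$ of crossings on $\partial r$ is even and the signs read $+,-,+,-,\ldots$ around the region.

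For the base case $k=2$, $r$ is a bigon with one positive and one negative crossing, both of which have the same over/under pattern (CCW arc over CW arc). Because the two crossing signs cancel, the two strands are locally unlinked, so a sequence of Reidemeister moves removes both crossings to produce a diagram $D'$ with $n-2$ crossings. A direct check shows that the Seifert circle count is preserved: $s(D')=s$, because the two Seifert-smoothings at $c_1$ and $c_2$ in $D$ reconnect the four incident arcs exactly as the isotopy does. Morton's inequality (Theorem~\ref{thm:morton}) applied to $D'$ then yields $\deg_z P_{D'}\le (n-2)-s+1 = n-s-1$, and invariance of the HOMFLY polynomial under isotopy transfers the bound to $D$.

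For the inductive step with $k\ge 4$, I would expand $P_D$ using the skein relation simultaneously at two adjacent crossings $c_1,c_2\in\partial r$ of opposite signs, producing four terms indexed by the choice of smoothing or changing at each. In the ``both smoothed'' term the diagram has $n-2$ crossings and Seifert circle count $s$, while the region $r$ has grown by absorbing its two neighbors across $c_1$ and $c_2$; the enlarged region should inherit an alternating contour with at least two crossings (the remaining $c_3,\ldots,c_k$ together with possible new boundary crossings contributed by the absorbed neighbors), so the inductive hypothesis applies and yields a bound of $(n-2)-s-1$ for this term's HOMFLY, hence $\le n-s-1$ after multiplication by the $vz\cdot vz$ weight. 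The ``both changed'' and mixed terms have either $n$ or $n-1$ crossings with locally modified structure, and must be controlled by further skein expansions or by direct analysis.

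The main obstacle is the rigorous bookkeeping in the inductive step. One must verify that the ``both smoothed'' diagram genuinely satisfies the hypothesis of the proposition, including the single-quadrant non-degeneracy at each crossing on the boundary of the enlarged region; and separately bound the contribution of the ``both changed'' term, where the alternating contour is destroyed simultaneously at two adjacent corners and replaced by a clasp of opposite sign. Handling the latter will likely require either additional skein operations or the identification of a different region satisfying the proposition's hypothesis in the resulting diagram, together with careful tracking of how the Seifert circle count behaves through the successive operations.
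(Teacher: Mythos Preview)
Your base case is essentially correct: when $\partial r$ has only two crossings, the over-arc is over at both of them, so it can be lifted off the under-arc by an ambient isotopy (not a single Reidemeister II, but a genuine link isotopy through the empty region $r$); the resulting diagram has $n-2$ crossings and, since the bigon is noncyclic, the same number $s$ of Seifert circles, so Morton's bound finishes the job. Your remark about ``signs cancelling'' is a red herring here---the relevant fact is the over/under pattern, not the signs.

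The inductive step, however, has a gap that is more than bookkeeping, and it is precisely the one you flag. Applying the skein relation at crossings \emph{on} $\partial r$ destroys the hypothesis: in the ``crossing changed'' terms the counterclockwise arc is now under, so the alternating-contour condition fails and you have no inductive hypothesis to invoke. Your hope that the ``both smoothed'' term inherits an alternating contour is also unjustified: Seifert-smoothing a crossing of $\partial r$ merges $r$ with a neighbouring region whose boundary lies in the part of $D$ about which the Proposition assumes nothing, so the enlarged region need not satisfy the over/under condition at all. There is no evident way to recover control of these terms, and I do not see how to complete the argument along these lines.

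The paper's proof avoids this difficulty by a different choice of induction variable. It never touches the $2k$ crossings on $\partial r$; instead it inducts on the number $N$ of crossings \emph{inside} the disk bounded by the contour. Changing such a crossing leaves the alternating contour intact, so both $L_{+}$ and $L_{-}$ satisfy the hypothesis; smoothing reduces $N$. The skein relation then shows that if the bound holds for $L_0$ and for one of $L_{\pm}$, it holds for the other---hence it suffices to prove the bound after \emph{arbitrary} crossing changes inside the disk. That freedom is then exploited geometrically (emptying monogons via fair and good moves, Lemma~\ref{lem:monogon}) together with a secondary induction on $k$, pulling one outer over-arc across the disk to reduce $k$. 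The base case $N=0$ is handled by observing that the over-arcs and under-arcs lie on distinct, unlinked components. The moral is that the crossings on $\partial r$ should be treated as untouchable scaffolding, with all the work done in the interior.
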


This statement is just a slight improvement on Morton's upper bound in Theorem~\ref{thm:morton}, yet it turns out to be where most of the difficulty in this paper is concentrated. Notice that we did not make any assumption on the crossings not adjacent to $r$, even though when we apply Proposition \ref{biglemma} in the proof of Theorem \ref{thm:treetop}, away from $\partial r$ the diagram $D$ is still alternating. Likewise, it is not assumed that $D$ be special. We delay the proof until Section \ref{hitoshi}.

\begin{ex}\label{ex:K32}
\begin{figure}[htbp] 
\labellist
\small
\pinlabel $v^2$ at 81 350
\pinlabel $v^2$ at 81 746
\pinlabel $v^2$ at 477 350
\pinlabel $v^2$ at 680 726
\pinlabel $v^2$ at 1076 330
\pinlabel $vz$ at 300 570
\pinlabel $vz$ at 300 966 
\pinlabel $vz$ at 1076 570
\pinlabel $vz$ at 690 966 
\pinlabel $vz$ at 680 330
\endlabellist
   \centering
   \includegraphics[width=\linewidth]{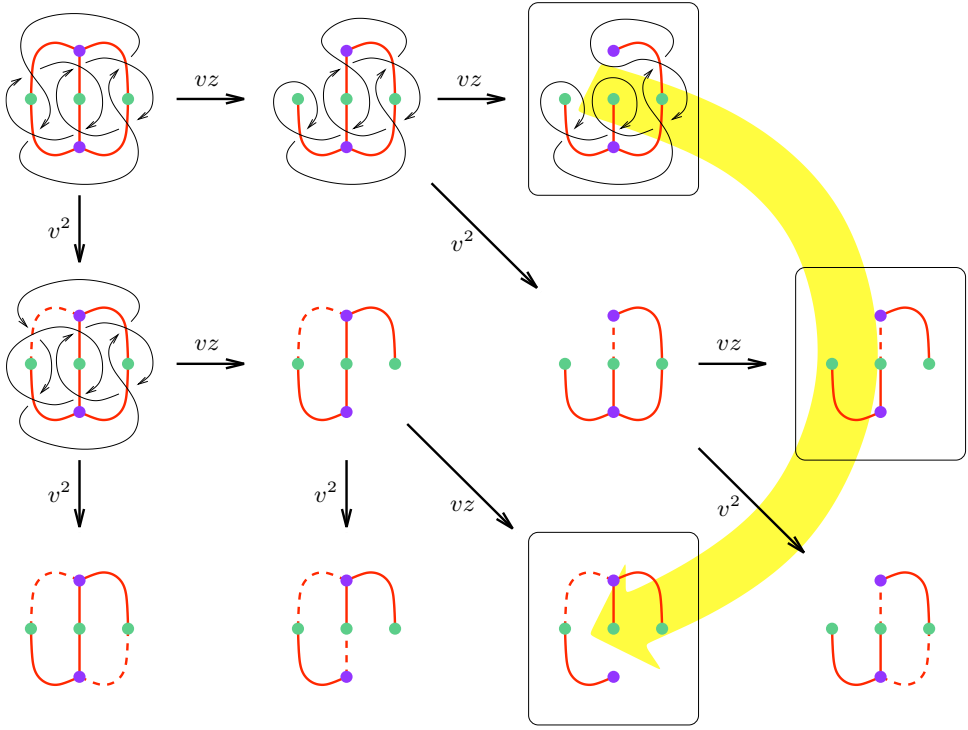} 
   \caption{The tree of subgraphs $\mathscr G$, partially superimposed with the computation tree $\mathscr T$. The semicircular arc indicates the right-to-left order of type I leaves.}
   \label{fig:szamolofa}
\end{figure}

For the complete bipartite graph $G=K_{3,2}$ of Figure \ref{fig:K32}, the link~$L_G$ consists of three fibers of the Hopf fibration. Its diagram has $n=6$ crossings and $s=5$ Seifert circles, so the top of the HOMFLY polynomial is at $z$-exponent $n-s+1=2$. The three type~II leaves of the computation tree $\mathscr T$ of Figure \ref{fig:szamolofa} are two two-component unlinks with the HOMFLY polynomial $(v^{-1}-v)/z$ and a distant union of a Hopf link with an unknot, with $P=\left(\begin{array}{ll}\hspace{8pt}vz&\\+vz^{-1}&-v^3z^{-1}\end{array}\right)\cdot\displaystyle\frac{v^{-1}-v}z$. (These values are well known and easy to check. We write the HOMFLY polynomial in a slightly unconventional way to emphasize its bigraded nature.) Hence
\begin{multline}\label{eq:K32homfly}
P_{L_G}(v,z) \ = \ 
\underbrace{2v^5z\cdot(v^{-1}z^{-1}-vz^{-1})
+v^4\cdot\left(\begin{array}{lll}
\hspace{8pt}1&-v^2&\\
+z^{-2}&-2v^2z^{-2}&+v^4z^{-2}
\end{array}\right)}_{\text{type II leaves}}\\
+\underbrace{(v^2z^2+2v^4z^2)\cdot1}_{\text{type I leaves}}
\ = \ \begin{array}{llll}
&&&\\
v^2z^2&+2v^4z^2&&\\
&+3v^4&-3v^6&\\
&+v^4z^{-2}&-2v^6z^{-2}&+v^8z^{-2}.
\end{array}
\end{multline}
In particular, the top of $P_{L_G}$ is $v^2+2v^4$. Notice how, in accordance with the proof of Theorem \ref{thm:treetop}, contributions to the top came from type I leaves only. The Conway polynomial is $\nabla_{L_G}(z)=P_{L_G}(1,z)=3z^2$ and the Alexander polynomial is $\Delta_{L_G}(t)=\nabla_{L_G}(t^{1/2}-t^{-1/2})=3t-6+3t^{-1}$.
\end{ex}

\begin{rem}
If one color class of $G$ consists of valence $2$ points, that is if $G$ is obtained from a plane graph by placing an extra vertex at the midpoint of each edge, then (as in the previous example) the Conway polynomial $\nabla_{L_G}(z)$ is a single monomial. This follows, for example, from a result of Jaeger \cite{jaeger}.
\end{rem}

\section{Triangulations of the root polytope}\label{sec:rootpoly}

In order to relate our results on arborescences to a different kind of combinatorics and to prove Theorem \ref{thm:triang}, we need to review some of Alexander Postnikov's results from his remarkable paper~\cite{post}. 

Let $G$ be an abstract bipartite graph. That is, we do not assume an embedding of $G$ into the plane. Let us denote the color classes of $G$ with $E$ and $V$. The graph~$G$ may have multiple edges but they do not affect the following construction.

For $e\in E$ and $v\in V$, let $\mathbf e$ and $\mathbf v$, respectively, denote standard generators of $\R^E\oplus\R^V$. Let the \emph{root polytope} of $G$ be
\[Q_G=\conv\{\,\mathbf e+\mathbf v\mid ev\text{ is an edge in }G\,\},\]
where $\conv$ denotes the usual convex hull.

\begin{lem}[{\cite[Lemma 12.5]{post}}]\label{lem:affindep}
Let $G$ be a connected bipartite graph on $s$ vertices.
The dimension of $Q_G$ is $s-2$.
A set of vertices of $Q_G$ is affine independent if and only if the corresponding edges in $G$ form a cycle-free subgraph. In particular, maximal (i.e., 
$(s-2)$-dimensional) simplices in $Q_G$ correspond to spanning trees of $G$. Furthermore, the volumes of such maximal simplices agree.
\end{lem}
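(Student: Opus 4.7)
The plan is to handle the four claims in order: the dimension upper bound, the affine-independence criterion, the spanning-tree description of maximal simplices, and then the equality of volumes.

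Every vertex $\mathbf{e}+\mathbf{v}$ of $Q_G$ satisfies both $\sum_{e'\in E}x_{e'}=1$ and $\sum_{v'\in V}x_{v'}=1$; these two linear conditions are independent because $E$ and $V$ are disjoint, so $Q_G$ lies in an affine subspace of codimension $2$ in $\R^E\oplus\R^V$, giving $\dim Q_G\le s-2$. For the affine-independence criterion, observe first that if $\sum_i\lambda_i(\mathbf{e}_i+\mathbf{v}_i)=\mathbf{0}$ then summing the $E$-coordinates forces $\sum_i\lambda_i=0$ automatically, so affine and linear dependence coincide for vectors of this form. The linear equation says exactly that, in the subgraph $H$ on edges $\{e_iv_i\}$, the $\lambda$-weighted degree at each vertex of $G$ vanishes. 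If $H$ contains a cycle (necessarily of even length $2\ell$ since $G$ is bipartite), then assigning $\pm1$ alternately around the cycle and $0$ elsewhere yields a nontrivial $\lambda$; conversely, the support of any nontrivial $\lambda$ has minimum degree at least two and must contain a cycle. Hence affine independence of a vertex set is equivalent to the corresponding edges forming a cycle-free subgraph. Connectedness of $G$ produces a spanning tree and hence $s-1$ affinely independent vertices, whence $\dim Q_G=s-2$ and maximal simplices correspond exactly to cycle-free subgraphs with $s-1$ edges, i.e., spanning trees.

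For the equal-volume claim I would invoke the classical fact that the vertex--edge incidence matrix of a bipartite graph is totally unimodular. Fix a vertex $w_0$ and project $\R^E\oplus\R^V$ onto the coordinates other than $w_0$; restricted to the affine hull of $Q_G$ this is an affine isomorphism onto its image, whose Jacobian is constant on the affine hull and therefore rescales every $(s-2)$-volume by the same factor. For a spanning tree $T$, translating one of its $s-1$ simplex vertices to the origin makes the remaining projected vertices into the columns of an $(s-2)\times(s-2)$ submatrix of the reduced incidence matrix $M'_T$ obtained by deleting the $w_0$-row from the $T$-columns of the incidence matrix. The reduced incidence matrix of a spanning tree is nonsingular (a classical graph-theoretic fact, essentially the matrix-tree theorem), and total unimodularity forces $|\det M'_T|=1$; hence the $(s-2)$-volume of the simplex equals a fixed constant, independent of $T$.

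The main obstacle is this last step: matching the Euclidean $(s-2)$-volume on the affine hull of $Q_G$ with a square submatrix of the reduced incidence matrix requires careful bookkeeping, since the projection is an affine bijection but not an isometry. The combinatorial core (total unimodularity together with nonsingularity of the reduced incidence matrix of a spanning tree) is classical and does the real work; once the normalizations are pinned down, the identical $\pm 1$ determinants across all spanning trees make the equal-volume conclusion automatic.
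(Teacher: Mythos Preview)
The paper does not actually prove this lemma: it is quoted from Postnikov, and the authors only remark that ``the proofs of the previous two lemmas are short and elementary.''  So I am evaluating your argument on its own merits rather than against anything in the paper.

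Your treatment of the dimension bound, the affine-independence criterion, and the identification of maximal simplices with spanning trees is correct and is exactly the standard elementary argument.

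In the volume step there is a small but genuine bookkeeping slip.  Deleting a \emph{single} coordinate $w_0$ sends the affine hull of $Q_G$ isomorphically onto an $(s-2)$-dimensional affine subspace of $\R^{s-1}$, not onto all of $\R^{s-2}$.  After translating one vertex to the origin you therefore have an $(s-1)\times(s-2)$ matrix, which is neither square nor a submatrix of the reduced incidence matrix.  The clean fix is to delete \emph{two} coordinates, one $e_0\in E$ and one $v_0\in V$; the two affine constraints $\sum_E x_e=1$ and $\sum_V x_v=1$ then recover both dropped coordinates, so this projection is an affine isomorphism onto $\R^{s-2}$.  The $(s-1)$ projected tree vertices, augmented by a top row of $1$'s, give an $(s-1)\times(s-1)$ matrix whose determinant is (by an elementary row operation) exactly that of the reduced incidence matrix of $T$ with the single row $v_0$ removed; this is the nonsingular $(s-1)\times(s-1)$ matrix you had in mind, and total unimodularity forces its determinant to be $\pm1$.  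Alternatively one can stay in $\R^{s-1}$ and use Cauchy--Binet on the $(s-1)\times(s-2)$ matrix.

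You already flagged this normalization as the ``main obstacle,'' so you were right to be wary; the point is simply that one must drop a coordinate from each color class, not just one coordinate overall.  With that adjustment the argument goes through.
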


\begin{defn}\label{def:triang}
A collection of maximal simplices in $Q_G$ (so that their vertices are also vertices of the root polytope) is a \emph{triangulation} if their union is $Q_G$ and if every two of them intersect in a common face. 
\end{defn}

Studying these triangulations reveals many a subtle phenomenon, see \cite{post}. First let us quote the translation, to subgraph terms, of the second condition of the definition.

\begin{lem}[{\cite[Lemma 12.6]{post}}]\label{lem:compa}
Let $\Gamma_1$ and $\Gamma_2$ be spanning trees in $G$. The following two statements are equivalent.
\begin{enumerate}[label={\rm (\roman*)}]
\item The simplices in $Q_G$ that correspond to the $\Gamma_i$ intersect in a common face.
\item There does not exist a cycle $\varepsilon_1,\varepsilon_2,\ldots,\varepsilon_{2k}$ of edges in $G$, where $k\ge2$, so that all odd-index edges are from $\Gamma_1$ and all even-index edges are from $\Gamma_2$.
\end{enumerate}
\end{lem}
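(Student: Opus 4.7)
The plan is to translate ``common face'' into an affine-algebra condition and combine it with Lemma \ref{lem:affindep}, which ties affine dependences among vertices of $Q_G$ to cycles in $G$. Throughout, I write $\phi(e)=\mathbf e+\mathbf v$ for the vertex of $Q_G$ coming from an edge $e=ev$ of $G$.

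For the direction (ii)$\Rightarrow$(i), I would start from an alternating cycle $\varepsilon_1,\ldots,\varepsilon_{2k}$ with $k\ge 2$, odd-indexed edges in $\Gamma_1$ and even-indexed edges in $\Gamma_2$. Each vertex of $G$ visited by the cycle is incident to exactly one odd-index and one even-index cycle edge, which immediately gives the identity
\[
\sum_{i\text{ odd}}\phi(\varepsilon_i)=\sum_{i\text{ even}}\phi(\varepsilon_i).
\]
Hence the common centroid $\tfrac1k\sum_{i\text{ odd}}\phi(\varepsilon_i)$ lies simultaneously in the relative interior of the sub-simplex of $\sigma_1$ spanned by the odd-index $\phi$-vertices and in the relative interior of the sub-simplex of $\sigma_2$ spanned by the even-index ones. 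Since $k\ge 2$, the $2k$ cycle edges realise $2k$ pairwise distinct $E$-$V$ pairs and so their $\phi$-images are $2k$ distinct vertices of $Q_G$; in particular the two sub-simplices share no vertex. Consequently $\sigma_1\cap\sigma_2$ strictly contains $\conv(V(\sigma_1)\cap V(\sigma_2))$ and cannot be a common face.

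For the contrapositive of (i)$\Rightarrow$(ii), I would take a point $p\in\sigma_1\cap\sigma_2$ witnessing the failure of the common-face condition and compare its representations $p=\sum_{e\in\Gamma_1}\alpha_e\phi(e)=\sum_{e\in\Gamma_2}\beta_e\phi(e)$. Setting $\gamma_e=\alpha_e-\beta_e$ (extended by $0$ outside the relevant tree) produces a nonzero relation $\sum_e\gamma_e\phi(e)=0$ in which $S^+:=\{\gamma_e>0\}\subseteq\Gamma_1$ and $S^-:=\{\gamma_e<0\}\subseteq\Gamma_2$. Splitting the relation into its $\mathbf e$- and $\mathbf v$-components yields the local balance $\sum_{e\ni u}\gamma_e=0$ at every vertex $u$ of $G$, i.e.\ the $|\gamma|$-weighted $S^+$-degree and $S^-$-degree agree at each vertex. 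I would then run an alternating walk: start along an $S^+$ edge, use balance at the new vertex to continue along an $S^-$ edge, then $S^+$ again, and so on; the first revisited vertex closes off a simple alternating cycle whose odd edges lie in $\Gamma_1$ and even edges in $\Gamma_2$.

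The main obstacle I anticipate is preventing this extracted cycle from having length $2$ (the excluded $k=1$ case), which would correspond to two parallel edges $e_1\in\Gamma_1,\,e_2\in\Gamma_2$ with identical endpoints. My plan is to preprocess $\gamma$ so that no $\phi$-vertex of $Q_G$ carries both signs: whenever $\gamma_{e_1}>0$ and $\gamma_{e_2}<0$ for parallel $e_1,e_2$, the joint contribution $\gamma_{e_1}\phi(e_1)+\gamma_{e_2}\phi(e_2)=(\gamma_{e_1}+\gamma_{e_2})\phi(e_1)$ may be replaced by a single summand of net sign without disturbing $\sum_e\gamma_e\phi(e)=0$. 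After this reduction $S^+\cup S^-$ contains no parallel pair, so the first simple closed sub-walk produced by the alternating walk must traverse at least four distinct edges, yielding an honest cycle with $k\ge 2$. The balance condition simultaneously ensures the walk never stalls, since at each visited vertex an edge of the required sign is always available.
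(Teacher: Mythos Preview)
The paper does not supply its own proof of this lemma; it is quoted from Postnikov with only the remark that the argument is ``short and elementary.'' Your outline is essentially that elementary argument, and the second half (extracting an alternating cycle from a nontrivial affine relation, with the parallel-edge reduction to block the degenerate $k=1$ case) is correct as written.

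Two points deserve attention. First, your direction labels are swapped: under the heading ``(ii)$\Rightarrow$(i)'' you in fact prove $\neg$(ii)$\Rightarrow\neg$(i), which is the contrapositive of (i)$\Rightarrow$(ii), and likewise for the other half.

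Second, in that first half there is a genuine (if small) gap. You correctly note that the $2k$ points $\phi(\varepsilon_i)$ are pairwise distinct, so the odd and even sub-simplices $\tau_1,\tau_2$ share no vertex; but this alone does not force the centroid $p$ to lie outside $\conv\bigl(V(\sigma_1)\cap V(\sigma_2)\bigr)$. Nothing prevents $V(\tau_1)$ and $V(\tau_2)$ from being disjoint subsets of $V(\sigma_1)\cap V(\sigma_2)$. The missing step is: if $p$ lay in that face, then by uniqueness of barycentric coordinates in $\sigma_1$ every odd $\phi(\varepsilon_i)$ would belong to $V(\sigma_2)$, i.e.\ each odd $\varepsilon_i$ would have a parallel (possibly itself) in $\Gamma_2$; replacing the odd edges by those parallels along the same vertex sequence then exhibits a cycle inside the tree $\Gamma_2$, which is impossible. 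With this one sentence inserted, your argument is complete.
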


The proofs of the previous two lemmas are short and elementary. From the last assertion in Lemma \ref{lem:affindep}, it follows that each triangulation of $Q_G$ consists of the same number of simplices. Postnikov also expresses that value in terms of $G$. 

\begin{thm}[Postnikov \cite{post}]\label{thm:post}
Let $G$ be a connected bipartite graph with color classes~$E$ and $V$. The number of simplices in each triangulation of the root polytope $Q_G$ is the number of possible valence distributions, taken at elements of $E$, of spanning trees of $G$.
\end{thm}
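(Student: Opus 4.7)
My plan is to combine a volume-invariance argument with the construction of one explicit triangulation. By Lemma~\ref{lem:affindep}, every maximal simplex in $Q_G$ has the same volume, so the number of simplices in any triangulation equals the normalized volume $\mathrm{NVol}(Q_G)$ and is in particular independent of the triangulation. It therefore suffices to exhibit one triangulation $\mathcal T^*$ whose simplices are canonically indexed by the realizable $E$-valence vectors $(d_e)_{e\in E}$, i.e.\ vectors with $d_e\ge 1$ and $\sum_e d_e=s-1$ that arise as the sequence of $E$-valences of some spanning tree of $G$.

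To build $\mathcal T^*$, I would fix a generic linear functional $\ell$ on $\R^E\oplus\R^V$. For each realizable $(d_e)$, let $\Gamma_{(d_e)}$ be the $\ell$-minimal spanning tree with those $E$-valences, constructed greedily: repeatedly adjoin the $\ell$-smallest edge that preserves acyclicity and does not exceed any target $d_e$. Set $\mathcal T^*=\{\sigma_{\Gamma_{(d_e)}}\}$. Verifying that $\mathcal T^*$ is a triangulation requires two ingredients. First, coverage: project $Q_G\to\Delta_E$ by forgetting the $V$-coordinates; via Cayley's trick, $Q_G$ is the Cayley embedding of the vertex figures $\Delta_e=\conv\{\mathbf v\mid ev\in G\}\subset\R^V$, triangulations of $Q_G$ correspond to fine mixed subdivisions of $\sum_e\Delta_e$, and the cell types of such a subdivision record exactly the $E$-valence vectors of the corresponding spanning trees. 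Showing that every realizable type is represented in $\mathcal T^*$ is therefore enough to cover $Q_G$. Second, compatibility: two simplices $\sigma_{\Gamma_{(d_e)}}$ and $\sigma_{\Gamma_{(d'_e)}}$ with $(d_e)\ne(d'_e)$ cannot carry a forbidden alternating cycle in the sense of Lemma~\ref{lem:compa}(ii), because such a cycle would permit an $\ell$-reducing edge swap that preserves both $E$-valence profiles, contradicting the greedy definition of at least one of the two trees.

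The main obstacle, and the part where I expect the real work to lie, is the injectivity analogue: showing that two distinct spanning trees $\Gamma_1\ne\Gamma_2$ with \emph{identical} $E$-valence vectors must carry an alternating $2k$-cycle, so that by Lemma~\ref{lem:compa}(ii) they cannot coexist as simplices in any triangulation. Equal $E$-valences imply that at each $e\in E$ the incident edges of $\Gamma_1$ and $\Gamma_2$ can be paired up one-to-one; the resulting ``swap graph'' is $2$-regular at every active $E$-vertex, so any connected component lifts to an alternating closed walk in $G$. The subtlety is that this walk need not be a simple cycle, and one must use a shortest-walk or symmetric basis exchange argument in the graphic matroid of $G$ to extract a genuine simple alternating $2k$-cycle. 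Once injectivity is established, it combines with the explicit count $\#\mathcal T^* = \#\{\text{realizable }(d_e)\}$ and volume invariance to force the bijection simplex~$\leftrightarrow$~realizable valence vector in every triangulation, completing the proof.
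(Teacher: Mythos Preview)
The paper does not prove Theorem~\ref{thm:post}; it is quoted from Postnikov~\cite{post} and used as a black box in the proof of Theorem~\ref{thm:triang}. There is therefore no ``paper's own proof'' to compare your proposal against. That said, your overall strategy---reduce to a single count via equal simplex volumes, then interpret $Q_G$ as a Cayley polytope so that triangulations correspond to fine mixed subdivisions of $\sum_{e}\Delta_e$ whose cell types are exactly the $E$-valence vectors---is precisely Postnikov's route, so you are on the right track.

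Several steps in your sketch are genuine gaps, though. First, the greedy $\ell$-minimal construction is not obviously well defined: the acyclicity constraint and the valence caps $d_e$ can interact so that the greedy process stalls before reaching a spanning tree; you would need a matroid-exchange argument to guarantee it terminates at a tree with the prescribed valences. Second, your compatibility argument assumes an alternating cycle yields an $E$-valence-preserving edge swap, but Lemma~\ref{lem:compa} does not force the cycle into the symmetric difference: an even-indexed edge $\varepsilon_{2j}\in\Gamma_2$ may already lie in $\Gamma_1$, and then ``removing $\varepsilon_{2j-1}$ and adding $\varepsilon_{2j}$'' lowers the valence at their common $E$-vertex. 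Reducing to a cycle in $\Gamma_1\mathbin{\triangle}\Gamma_2$ is essentially the same difficulty as the ``injectivity'' lemma you flag as the main obstacle. Third, your coverage argument is circular as stated: you appeal to the Cayley correspondence (triangulations of $Q_G$ $\leftrightarrow$ fine mixed subdivisions) to argue that hitting every type suffices to cover, but that correspondence takes as input a triangulation, which is what you are trying to establish. One way out is to observe that once injectivity is proved, you no longer need the explicit $\mathcal T^*$ at all: in any triangulation the map $\sigma_\Gamma\mapsto(d_e(\Gamma))_e$ is injective, and the Cayley/mixed-subdivision picture then gives surjectivity directly, finishing the count without the greedy construction.
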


There is an obvious sense in which $(V,E)$ is a hypergraph \cite{hypertutte} and in that context it is fairly natural to rename (essentially) the valence distributions above as follows.

\begin{defn}
Let $G$ be a connected bipartite graph with color classes $E$ and $V$. A function $\mathbf f\colon E\to\N$ is called a \emph{hypertree} (in the hypergraph~$(V,E)$) if $G$ has a spanning tree with valence $\mathbf f(e)+1$ at each $e\in E$.
\end{defn}

With this, Theorem \ref{thm:post} says that the number of simplices needed to triangulate $Q_G$ is the number of hypertrees in $(V,E)$. Of course, the same can be claimed regarding the `abstract dual' hypergraph $(E,V)$ as well.

\begin{proof}[Proof of Theorem \ref{thm:triang}.]
Even though all the necessary ingredients are included in a previous paper~\cite{hypertutte}, we spell out a proof for completeness and in order to elaborate on some details. As usual, let us denote the color classes in $G$ by $E$ and $V$.

Let us consider a pair $A_1$, $A_2$ of arborescences in $G^*$ (both rooted at $r_0$). Assume that their dual trees $\Gamma_1$, $\Gamma_2$ violate the condition in Lemma \ref{lem:compa}, that is, there exists a cycle $\Phi$ in $G$ composed of edges alternately from $\Gamma_1$ and $\Gamma_2$. Let $\Phi$ bound the disks $U$ and $U'$ in $S^2$. Since elements of $E$ and $V$ alternate along $\Phi$, it easily follows that all edges of $A_1$ that cross $\Phi$ do so from one side to the other, say from $U$ to $U'$. The same is true for edges of $A_2$ crossing $\Phi$, but those travel from $U'$ to $U$. But then, the fact that $r_0$ cannot be in $U$ and in $U'$ simultaneously prevents one of $A_1$ and $A_2$ from being an arborescence: If, say, $r_0\in U$, then $A_2$ is not an arborescence because the startpoints of its edges crossing $\Phi$ (which exist because $A_2$ is connected) cannot be reached by an oriented path from $r_0$.

Now that we have seen that all pairs of simplices resulting from our arborescences satisfy the compatibility condition in Definition \ref{def:triang}, we just have to make sure that there is enough of them so that their union is the entire root polytope. By Theorem~\ref{thm:post}, it suffices to show that all hypertrees in $(V,E)$ are realized by spanning trees in $G$ dual to spanning arborescences in $G^*$. But that is part of the statement of Theorem 10.1 in \cite{hypertutte}.
\end{proof}

\begin{ex}
\begin{figure}[htbp]
\hspace*{-30pt}
\begin{minipage}{.35\linewidth}
\labellist
\tiny
\pinlabel $r_0$ at 77 289
\pinlabel $r_1$ at 257 289
\pinlabel $r_2$ at 437 289
\pinlabel $v_0$ at 346 108
\pinlabel $v_1$ at 346 469
\pinlabel $e_0$ at 166 289
\pinlabel $e_1$ at 346 289
\pinlabel $e_2$ at 526 289
\pinlabel $\kappa$ at 210 120
\endlabellist
\raggedright
\includegraphics[width=\linewidth]{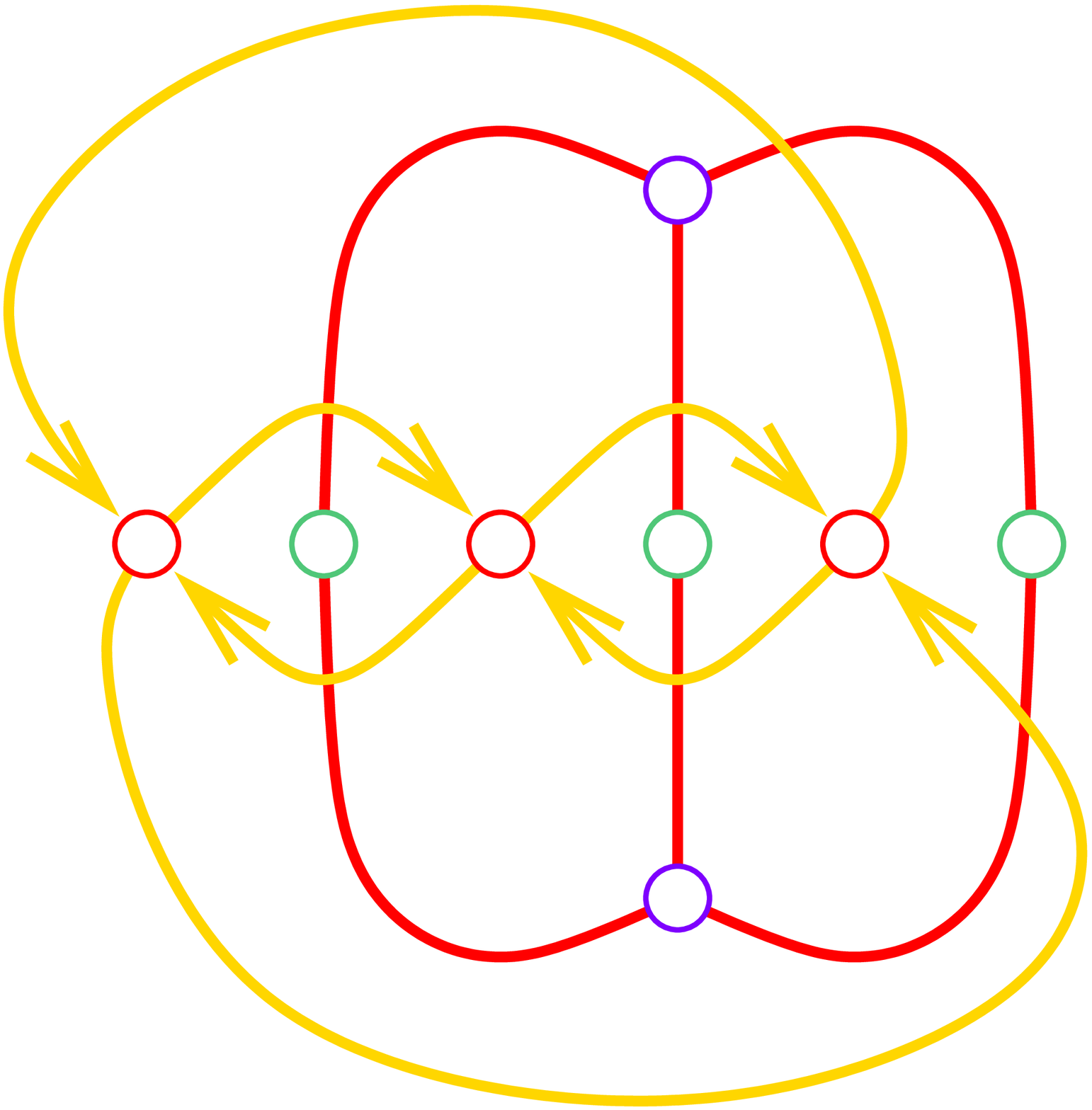}
\end{minipage}
\hspace{.03\linewidth}
\begin{minipage}{.08\linewidth}
\includegraphics[width=\linewidth]{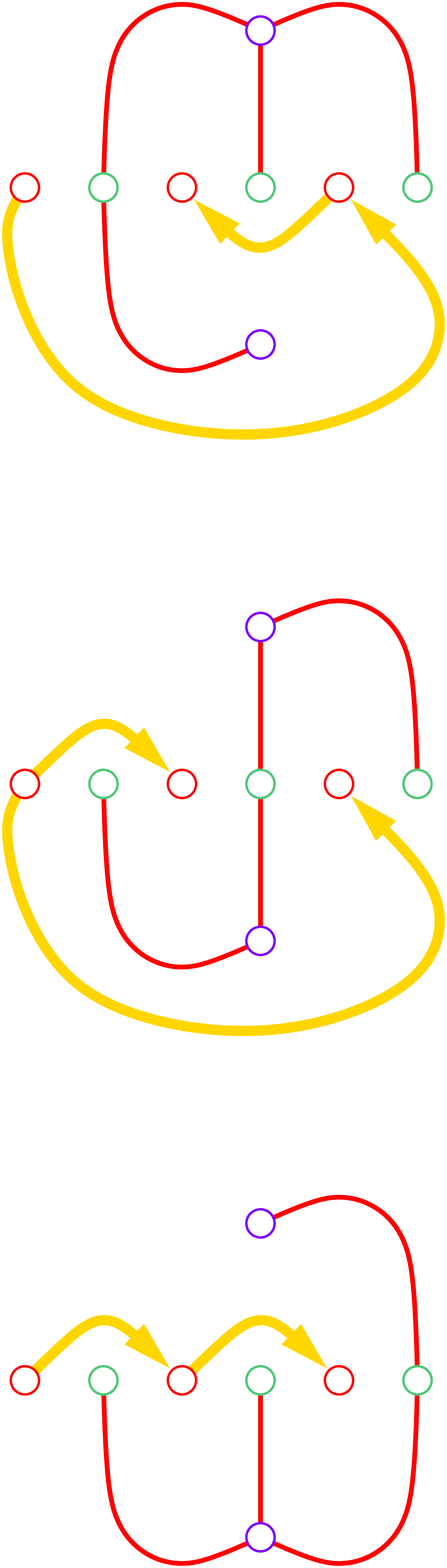}
\end{minipage}
\begin{minipage}{.1\linewidth}
\includegraphics[width=\linewidth]{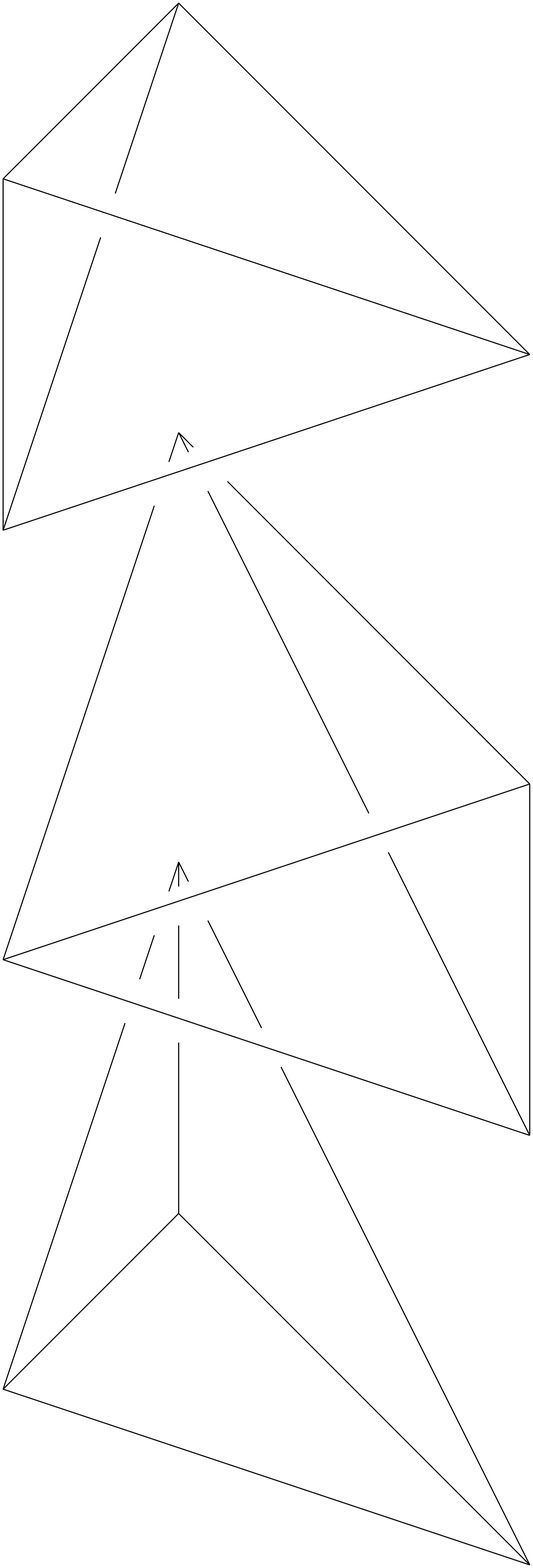}
\end{minipage}
\hspace{.07\linewidth}
\begin{minipage}{.25\linewidth}
\labellist
\tiny
\pinlabel ${\mathbf e}_0+{\mathbf v}_0$ at -65 170
\pinlabel ${\mathbf e}_0+{\mathbf v}_1$ at -65 490
\pinlabel ${\mathbf e}_1+{\mathbf v}_0$ at 560 10
\pinlabel ${\mathbf e}_1+{\mathbf v}_1$ at 560 330
\pinlabel ${\mathbf e}_2+{\mathbf v}_0$ at 240 330
\pinlabel ${\mathbf e}_2+{\mathbf v}_1$ at 240 660
\endlabellist
\includegraphics[width=\linewidth]{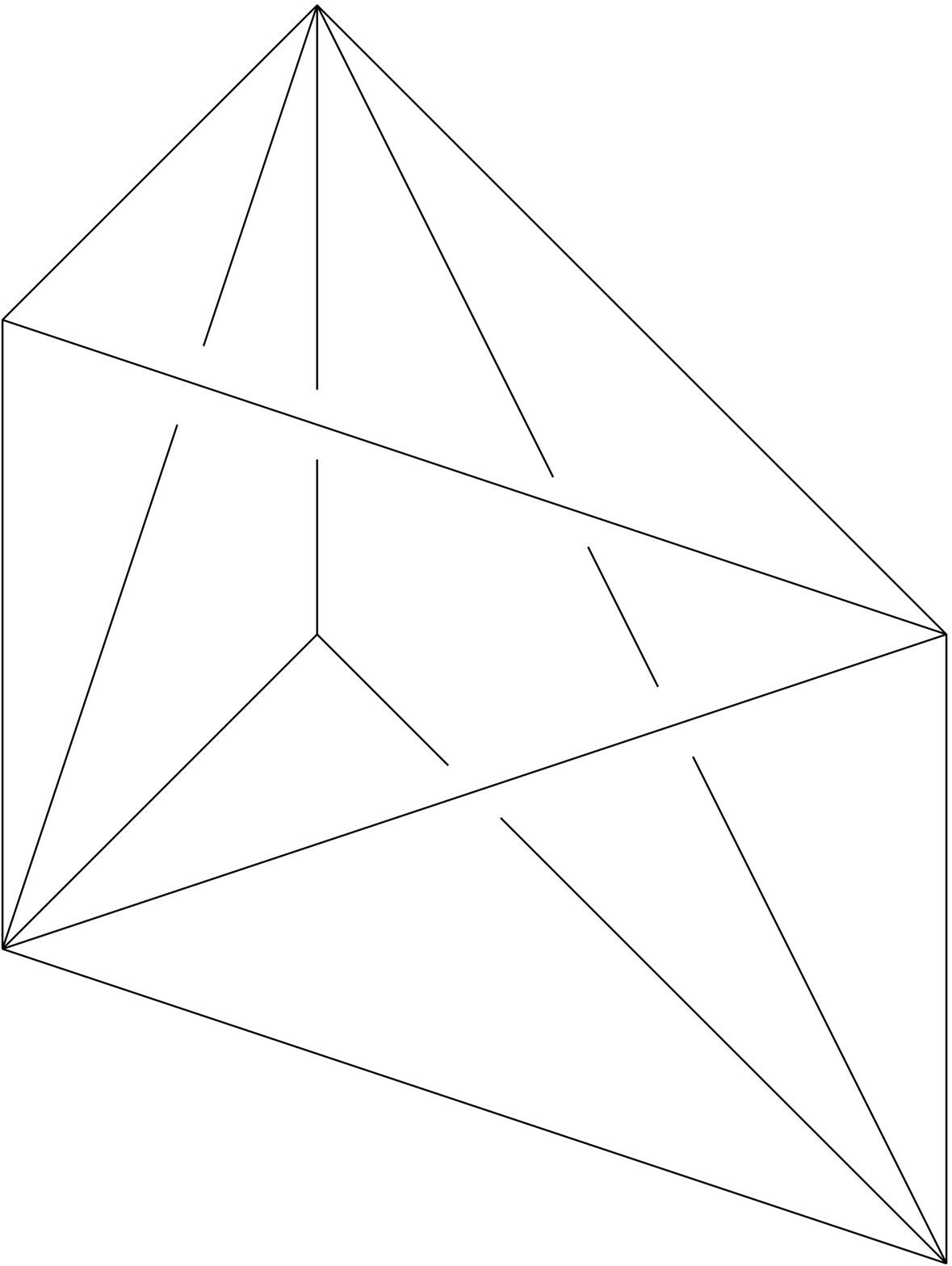}
\end{minipage}
\caption{Left: The graph $G=K_{3,2}$ with color classes $E=\{\,e_0,e_1,e_2\,\}$ and $V=\{\,v_0,v_1\,\}$, and its dual $G^*$. Middle: The spanning arborescences in $G^*$ (relative to the root $r_0$), their dual spanning trees in $G$, and the corresponding maximal simplices in $Q_G$. Right: The three simplices triangulate $Q_G$.}
\label{fig:triang}
\end{figure}

The root polytope of the complete bipartite graph $G=K_{3,2}$ is the product of an interval and a triangle. (In general, $Q_G$ is obtained from the product $\Delta_E\times\Delta_V$ of the $(|E|-1)$-dimensional unit simplex $\Delta_E$ and the $(|V|-1)$-dimensional unit simplex $\Delta_V$ by truncating vertices corresponding to non-edges of $G$.) In Figure~\ref{fig:triang}, we show the triangulation of $Q_G$ corresponding to the arborescences found in Example \ref{ex:arb_tree}. 
\end{ex}

\section{The HOMFLY polynomial and the root polytope}\label{sec:triang}

In this section we prove Theorem \ref{thm:homfly=h}. As explained in the Introduction, the type~I leaves of the binary tree~$\mathscr A$ have a natural order from right to left (see Figure \ref{fig:szamolofa} for an example). The first (smallest) element in the order is the clocked arborescence. 

\begin{thm}\label{thm:shell}
Let $G$ be a connected plane bipartite graph. 
For any choice of root~$r_0$ and adjacent edge $\kappa$ that we use to triangulate the root polytope $Q_G$, the order on the set of maximal simplices induced by the right-to-left order of the corresponding spanning arborescences 
is a shelling order. In particular, the triangulations of $Q_G$ described in Theorem \ref{thm:triang} are shellable. 
\end{thm}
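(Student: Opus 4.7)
The plan is to establish, for each type I leaf $(A_i, S_i)$ with $i \geq 2$, the identity
\[\sigma_i \cap (\sigma_1 \cup \cdots \cup \sigma_{i-1}) = \bigcup_{\delta \in S_i} F_\delta,\]
where $F_\delta$ denotes the codimension-one face of $\sigma_i$ opposite the vertex $\delta^{\ast}$ of $\Gamma_i = A_i^{\ast}$. Since distinct skipped edges correspond to distinct vertices of $\sigma_i$, this identity immediately yields a shelling with $c_i = |S_i| \geq 1$ for $i \geq 2$, and the shellability assertion follows at once.

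For the inclusion ``$\supseteq$'', I would fix $\delta = (u,v) \in S_i$, let $\delta_v \in A_i$ be the unique incoming edge of $v$, and verify that $A'_i := (A_i \cup \{\delta\}) \setminus \{\delta_v\}$ is again a spanning arborescence of $G^{\ast}$. This amounts to showing that $v$ is not on the directed $r_0$-to-$u$ path in $A_i$, for then deleting $\delta_v$ keeps $u$ reachable from $r_0$ and $v$ is reached via $\delta$. The key structural fact is that $\delta$ was skipped at some node $(B,T)$ of $\mathscr A$ on the way to $(A_i, S_i)$; at that stage $u$ already belonged to the root component of $B$ while $v$ was still isolated. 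Because $\mathscr A$ is built greedily by attaching augmenting edges to isolated points only---never rerouting existing tree edges---the $r_0$-to-$u$ path in $A_i$ agrees with the one in $B$, and in particular avoids $v$.

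Next I would trace the path in $\mathscr A$ to the leaf carrying $A'_i$, unique by Lemma \ref{lem:allarboappear}. For any ancestor $(B',T')$ strictly above $(B,T)$, the augmenting edge $\delta'$ avoids $\{\delta, \delta_v\}$ (neither has been processed yet at $(B',T')$, since $\delta \notin B \cup T$ and $\delta_v \in A_i \setminus B$ with $\delta_v \notin S_i$), so $\delta' \in A_i \iff \delta' \in A'_i$ and the two paths coincide. At $(B,T)$ itself, the augmenting edge is $\delta$, which lies in $A'_i$ but not in $A_i$, forcing the path to $A'_i$ to branch right while that to $A_i$ branches left. Hence the leaf $j$ carrying $A'_i$ precedes $(A_i, S_i)$ in the right-to-left order, and $\sigma_i \cap \sigma_j \supseteq F_\delta$.

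For the reverse inclusion ``$\subseteq$'' I would use a least common ancestor argument: for any $j < i$, at the deepest common ancestor $(B^{\ast}, T^{\ast})$ of the paths to $A_i$ and $A_j$ in $\mathscr A$, the path to $A_j$ turns right while that to $A_i$ turns left, so the augmenting edge $\mu$ there lies in both $A_j$ and $S_i$. By duality the vertex $\mu^{\ast}$ belongs to $\sigma_i$ but not to $\sigma_j$, so $\sigma_i \cap \sigma_j \subseteq F_\mu \subseteq \bigcup_{\delta \in S_i} F_\delta$. The main obstacle will be the structural claim of the second paragraph---verifying rigorously that the $r_0$-to-$u$ path in $A_i$ avoids $v$---after which the shelling, and with it the ``In particular'' assertion, follow directly.
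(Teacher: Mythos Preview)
Your argument is correct and follows essentially the same construction as the paper: given a skipped edge $\delta$ for $B=A_i$, form $A'$ by adding $\delta$ and deleting the unique edge of $A_i$ with the same endpoint, then show $A'<A_i$. Two points of comparison are worth noting. First, the paper outsources the fact that this swap yields a spanning arborescence to \cite[Lemma~9.8]{hypertutte}, whereas you supply a direct proof via the observation that $v$ is isolated at the node where $\delta$ is skipped, so the $r_0$-to-$u$ path in $A_i$ (which coincides with the one in $B$) avoids $v$; this is a clean self-contained replacement for the citation. Second, by proving the exact identity $\sigma_i\cap(\sigma_1\cup\cdots\cup\sigma_{i-1})=\bigcup_{\delta\in S_i}F_\delta$ rather than just the shelling criterion, you simultaneously obtain $c_i=|S_i|$, which in the paper is stated and proved separately as Theorem~\ref{thm:hfromskipped}. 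Your ``$\subseteq$'' direction via the least common ancestor is precisely the converse inequality in that proof. So your packaging is slightly more efficient, but the underlying mechanism---the edge swap at the branching node of $\mathscr A$---is identical.
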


\begin{proof}
As always, let us fix the root $r_0$ and the edge $\kappa$ that are used in the construction of the tree $\mathscr A$. Recall that the vertex set of $Q_G$ is identified with the set~$C$, which in turn can be viewed both as the edge set of $G$ and as the edge set of $G^*$. 

Let $A$ and $B$ be spanning arborescences so that $A<B$, i.e., the unique leaf of $\mathscr A$ that involves $A$ (cf.\ Lemma~\ref{lem:allarboappear}) is to the right of the leaf involving $B$. We have to find a third spanning arborescence $A'$ (allowing for $A'=A$) so that the corresponding simplices $\sigma_A, \sigma_{A'}, \sigma_{B}\subset Q_G$ (with vertex sets identified with $C\setminus A$, $C\setminus A'$, and $C\setminus B$, respectively) satisfy
\begin{enumerate}
\item\label{ichi} $\sigma_B\cap\sigma_{A'}$ is a codimension one face, 
i.e., $A'$ and $B$ differ in exactly one edge
\item\label{ni} $\sigma_B\cap\sigma_A\subset\sigma_{A'}$, that is, $A'\subset A\cup B$
\item\label{san} $\sigma_{A'}$ precedes $\sigma_B$, i.e., $A'<B$ in the right-to-left order.
\end{enumerate}
Let us find the node $(A_0,S_0)$ of $\mathscr A$ that is the last common node along the paths connecting the root to $A$ and $B$, respectively. Let $\delta$ be the augmenting edge of $(A_0,S_0)$. Then $\delta$ is an element of $A$ and it is a skipped edge for $B$. We construct $A'$ by adding $\delta$ to $B$ and removing the edge of $B$ with the same endpoint. By \cite[Lemma~9.8]{hypertutte}, this procedure is well defined and results in a spanning arborescence~$A'$. (Furthermore, $A'$ is the unique spanning arborescence that contains $\delta$ and all but one edge of $B$.) Let us check that $A'$ satisfies our requirements.

The condition \eqref{ichi} is obviously true by construction. The only edge of $A'$ that is not an edge of $B$ is $\delta$; since that is an edge of $A$, \eqref{ni} holds as well. Finally, we claim that the leaf of $\mathscr A$ involving $A'$ is either a descendant of the right descendant~$N=(A_0\cup\{\,\delta\,\}, S_0)$ of $(A_0,S_0)$, or else, the path from the root to $A'$ separates from the path to $N$ so that at some node, the former goes to the right and the latter to the left. This implies \eqref{san} immediately.

Indeed, if the path from the root to $A'$ separated from the path to $N$ when taking a step to the left, then an edge of $A_0\cup\{\,\delta\,\}$ (namely the augmenting edge at the parting of the paths) would be a skipped edge for $A'$. But since $A_0\cup\{\,\delta\,\}\subset A'$, this is impossible and the proof is complete.
\end{proof}

\begin{thm}\label{thm:hfromskipped}
When we compute the $h$-vector of the triangulation in Theorem~\ref{thm:triang} using the shelling of Theorem \ref{thm:shell}, the contribution $c_i$ (cf.\ \eqref{eq:ftoh}) of each simplex $\sigma_i$ is equal to the number of skipped edges for the corresponding type I leaf of $\mathscr A$.
\end{thm}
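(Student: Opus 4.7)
The goal is to show, for each type I leaf corresponding to a spanning arborescence $B$, that the number of facets of $\sigma_B$ contained in $\sigma_1\cup\cdots\cup\sigma_{i-1}$ equals $|S_B|$. Since the $\sigma_A$ triangulate the polytope $Q_G$ (Theorem~\ref{thm:triang}), each codimension-one face of $\sigma_B$ is either on $\partial Q_G$ or is shared with exactly one other maximal simplex $\sigma_{B'}$; in the latter case $B'$ must differ from $B$ by a single swap $B'=(B\setminus\{b\})\cup\{c\}$ with $b\in B$ and $c\in C\setminus B$, and \cite[Lemma~9.8]{hypertutte} ensures that such a swap yields a spanning arborescence precisely when $c$ does not point to $r_0$. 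The theorem therefore reduces to establishing that $B'<B$ in the right-to-left order if and only if the swapped-in edge $c$ lies in $S_B$.

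For the forward direction, given $\delta\in S_B$, I will locate the unique node $(A_0,S_0)$ along the path from the root of $\mathscr{A}$ to $B$ at which $\delta$ is the augmenting edge and the path turns left---this is precisely where $\delta$ enters the skipped-edge set. Because $\delta$ is augmenting at $(A_0,S_0)$, its endpoint has no incoming edge in $A_0\subset B$, so the unique edge $b\in B$ sharing that endpoint lies outside $A_0$. The swap lemma then produces a spanning arborescence $A'=(B\setminus\{b\})\cup\{\delta\}$ that contains $A_0\cup\{\delta\}$ and is disjoint from $S_0$ (using that $S_0\cap B=\emptyset$, $\delta\notin S_0$, and $b\in B$). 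Running the tree-of-arborescences construction exactly as in the proof of Theorem~\ref{thm:shell} then shows that the path to the leaf carrying $A'$ coincides with the path to $B$ through $(A_0,S_0)$ and next takes the right descendant, so $A'<B$.

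For the reverse direction, suppose $A'=(B\setminus\{b\})\cup\{c\}$ is a spanning arborescence with $A'<B$. Let $(A_0,S_0)$ be the last common node of the two paths; because $A'<B$, the $A'$-path turns right there and the $B$-path turns left, so the common augmenting edge $\delta$ lies in $A'$ and in $S_B$. Since $A'\setminus B=\{c\}$, this forces $\delta=c$, and hence $c\in S_B$.

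The most delicate point, and the one I expect to require the most care, is verifying in the forward direction that the path to the leaf of $A'$ truly agrees with the path to $B$ up to $(A_0,S_0)$. This reduces to checking that every augmenting edge encountered before $(A_0,S_0)$ either lies in both $A_0$ and $A'$ (when the $B$-path takes the right descendant) or in neither (when it takes the left), so that Definition~\ref{def:descend} selects identical augmenting edges and identical descendants for the two arborescences at every preceding step. Once this compatibility is established, the forward and reverse constructions are mutually inverse, giving a bijection between $S_B$ and the set of simplices $\sigma_{A'}$ with $A'<B$ that share a facet with $\sigma_B$, from which $c_i=|S_B|$ follows.
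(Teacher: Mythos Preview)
Your proposal is correct and follows essentially the same two-sided argument as the paper: for each $\delta\in S_B$, the swap $A'=(B\setminus\{b\})\cup\{\delta\}$ gives a neighbouring simplex with $A'<B$ (so $c_B\ge|S_B|$), and conversely any $A<B$ sharing a facet with $\sigma_B$ must arise this way from the augmenting edge at the last common node (so $c_B\le|S_B|$).

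One remark on what you flag as the ``most delicate point.'' The paper avoids this entirely: in the proof of Theorem~\ref{thm:shell} it does \emph{not} claim that the path to $A'$ coincides with the path to $B$ up to $(A_0,S_0)$, only that the $A'$-path never branches \emph{left} from the path to the right descendant $N=(A_0\cup\{\delta\},S_0)$, which already forces $A'<B$. That weaker statement uses only $A_0\cup\{\delta\}\subset A'$. Your stronger path-coincidence claim is in fact true, and follows immediately from the two facts you have already recorded---namely $A_0\cup\{\delta\}\subset A'$ and $A'\cap S_0=\varnothing$---since at every node before $(A_0,S_0)$ the augmenting edge lies either in $A_0$ (forcing both paths right) or in $S_0$ (forcing both left). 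So the point you anticipated as delicate is actually routine once those two inclusions are in hand, and in any case is more than the argument requires.
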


\begin{proof}
In the proof of Theorem \ref{thm:shell}, a spanning arborescence $A'$ was constructed for any spanning arborescence $B$ and skipped edge $\delta\in S_B$. (Here $S_B$ is the set of skipped edges for the unique leaf of $\mathscr A$ involving $B$, cf.\ Lemma \ref{lem:allarboappear}. Indeed, $A'$ depended on $A$ only through the choice of $\delta$.) Since $\delta$ becomes an edge of $A'$ in the construction, it is clear that different choices of $\delta$ yield different arborescences $A'$. By \eqref{ichi} and \eqref{san} of the proof, we then see that for any $B$, the number $c_B$ of maximal simplices that precede $\sigma_B$ in the shelling order and share a common facet with it, is at least $|S_B|$.

To prove the converse inequality, fix $B$ and let $A$ be a spanning arborescence that precedes $B$ in the right-to-left order so that $A$ only differs from $B$ in one edge. As in the proof of Theorem \ref{thm:shell}, let $(A_0,S_0)$ be the last common node of $\mathscr A$ along the paths from the root of $\mathscr A$ to $A$ and $B$, respectively. Then, since $A<B$, it is clear that the path toward $A$ passes through the right descendant of $(A_0,S_0)$ and the path toward $B$ passes through the left descendant. Therefore, if $\delta$ is the augmenting edge of $(A_0,S_0)$, then $\delta\in A$ but $\delta$ is a skipped edge for $B$. As $A$ and $B$ do not otherwise differ, it is clear that $A$ coincides with the arborescence $A'$ (for the $\delta$ we have just chosen) of the previous paragraph.
\end{proof}

\begin{proof}[Proof of Theorem \ref{thm:homfly=h}]
Using the description \eqref{eq:ftoh} of the $h$-vector and the fact that the dimension of $Q_G$ is $d=|E|+|V|-2=s-2$, we have
\begin{multline*}
\text{top of }P_{L_G}=v^{n-s+1}\sum_{\text{type I leaves}} v^{2k}=v^{n-s+1}\sum_{\text{maximal simplices}} v^{2c_i}\\
=v^{n+s-1}\sum_{\text{maximal simplices}} v^{-2(s-1)+2c_i}=v^{n+s-1}h(v^{-2}),
\end{multline*}
where the first equation is the statement of Theorem \ref{thm:treetop} and the second follows from Theorem \ref{thm:hfromskipped}.
\end{proof}

\begin{ex}
In Figure \ref{fig:szamolofa}, we indicated the right-to-left order of the type I leaves of the computation tree for the graph $K_{3,2}$. In the middle panel of Figure \ref{fig:triang}, we find the three corresponding simplices arranged from bottom (smallest) to top. That is a shelling order for the triangulation so that $c_1=0$ and $c_2=c_3=1$. Hence the $h$-vector is $h(x)=x^4+2x^3$. This, when compared to \eqref{eq:K32homfly}, confirms Theorem~\ref{thm:homfly=h} in this case.
\end{ex}

\section{The HOMFLY polynomial and parking functions}\label{sec:park}

The goal of this section is to prove Theorem \ref{thm:homfly=park}.
The following definition is due to Postnikov and Shapiro \cite{ps}. The only modification we made was to replace (relative) out-degree with in-degree, which is equivalent to an overall reversal of orientation in the directed graph.

\begin{defn}\label{def:parking}
Let $J=(R,C)$ be a directed graph with root $r_0\in R$. For a non-empty subset $R'\subset R\setminus\{r_0\}$ and $r\in R'$, define the \emph{relative in-degree} $\deg_{R'}(r)$ of $r$ as the number of edges in $C$ with endpoint $r$ and startpoint outside of $R'$. 

A function~$\pi\colon R\setminus\{r_0\}\to\N$ is called a \emph{parking function} of $J$ with respect to $r_0$ if any non-empty subset $R'\subset R\setminus\{r_0\}$ contains a vertex $r$ so that $\pi(r)<\deg_{R'}(r)$.

Let us denote the set of parking functions with $\Pi$ and introduce the polynomial 
\[p(u)=\sum_{\pi\in \Pi}u^{\left(\sum\limits_{r\in R\setminus\{r_0\}}\pi(r)\right)},\]
which we call the \emph{parking function enumerator}.
\end{defn}

\begin{ex}
We determine parking functions for the directed graph $G^*$ of our running example (see the left panel of Figure \ref{fig:triang} for notation). If $r_0$ plays the role of root, then the values $\pi(r_1),\pi(r_2)$ are subject to three conditions. The single element of $\{r_i\}$ has relative in-degree $2$ ($i=1,2$), which implies $0\le\pi(r_i)\le1$. Both elements of $\{\,r_1,r_2\,\}$ have relative in-degree $1$ and hence for any parking function $\pi$, one of $\pi(r_1),\pi(r_2)$ has to be $0$.

There are three solutions for $(\pi(r_1),\pi(r_2))$: $(0,0)$, $(1,0)$, and $(0,1)$. Therefore the parking function enumerator for $G^*$ is $p(u)=1+2u$ and, by comparing this to \eqref{eq:K32homfly}, we see that Theorem \ref{thm:homfly=park} holds in this case.
\end{ex}

It is well known that parking functions of $J$ with respect to $r_0$ are in a one-to-one correspondence with spanning arborescences of $J$ rooted at $r_0$ \cite{ps}. Several bijections have been given between the two sets \cite{jonevevan}. In the next theorem we describe another identification that is special to the case when $J=G^*$ for a plane bipartite graph~$G$. 
Recall that the tree $\mathscr A$ constructed in Section \ref{sec:arbo} is such that for any choice of auxiliary data $r_0,\kappa$, and for any spanning arborescence $A$ of $G^*$ rooted at $r_0$, there is a unique set $S$ of skipped edges so that $(A,S)$ becomes a type I leaf of $\mathscr A$.

\begin{thm}\label{thm:park}
For a connected plane bipartite graph $G$, root $r_0\in G^*$, and adjacent edge~$\kappa\in G$, let $(A,S)$ be a type I leaf of the tree of arborescences $\mathscr A$. For every vertex~$r\ne r_0$ of $G^*$, let $\pi(r)$ be the number of edges in $S$ that point to $r$. Then $\pi$ is a parking function with respect to $r_0$.

Furthermore, every parking function arises from a unique spanning arborescence in the manner described above.
\end{thm}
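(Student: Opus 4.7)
The plan is to analyze the structure of paths in $\mathscr{A}$ from the root to the type I leaf $(A,S)$, exploiting the fact that the root component of the current arborescence grows monotonically, by exactly one vertex each time we take a right step. Along such a path, each vertex $r \neq r_0$ enters the root component exactly once, via its parent edge in $A$; moreover, any edge pointing to $r$ that is selected as an augmenting edge at some node $(A_0, S_0)$ must be processed \emph{before} $r$ enters the root component, since otherwise $A_0 \cup \{\delta\}$ would give $r$ in-degree $2$ and fail to be an arborescence. Hence every skipped edge pointing to $r$ was processed while $r$ was still isolated, and (because augmenting edges always have their startpoint in the current root component, by Definition~\ref{def:descend}) the startpoint of every such skipped edge lies in some earlier, smaller root component.

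To show that $\pi$ is a parking function, I would fix a nonempty $R' \subset R \setminus \{r_0\}$ and let $r^* \in R'$ be the first vertex of $R'$ to enter the root component along the path from the root of $\mathscr{A}$ to $(A,S)$. Just before $r^*$ enters, the root component is contained in $R \setminus R'$; thus the parent edge of $r^*$ in $A$ has startpoint in $R \setminus R'$, and by the observation above \emph{every} skipped edge pointing to $r^*$ also has startpoint in $R \setminus R'$. The parent edge is in $A$, not in $S$, so it is not counted by $\pi(r^*)$, giving
\[
\deg_{R'}(r^*) \ \geq \ 1 + \pi(r^*),
\]
which verifies the parking function condition with witness $r^*$.

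For the bijection I would construct the inverse map directly. Given a parking function $\pi$, maintain counters $c(r) = 0$ for $r \neq r_0$ and traverse $\mathscr{A}$ starting at the root: at each non-leaf node, let $\delta$ be its augmenting edge and let $r$ be the endpoint of $\delta$; if $c(r) < \pi(r)$, step to the left descendant and increment $c(r)$, otherwise step to the right descendant. I would verify that this process terminates at a type I leaf. Were it to terminate at a type II leaf $(A_T, S_T)$ with $R' := R \setminus R_T$ nonempty, then every edge of $G^*$ from $R_T$ to a vertex $r \in R'$ would lie in $S_T$ (by definition of a type II leaf), so $c(r) = \deg_{R'}(r)$ for every $r \in R'$; but the traversal rule ensures $c(r) \leq \pi(r)$, yielding $\pi(r) \geq \deg_{R'}(r)$ for every $r \in R'$, contradicting the parking function property. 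At a type I leaf, the same rule guarantees $c(r) = \pi(r)$ for all $r$ (the counter stops growing precisely when we step right and $r$ enters the root component), so the induced parking function is $\pi$. Uniqueness of the preimage then follows because $\mathscr{A}$ is a tree: the path determined by $\pi$, and hence the leaf, is unique.

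I expect the main obstacle to be the careful bookkeeping in the first paragraph --- that every skipped edge pointing to $r^*$ has its startpoint outside $R'$. Once the monotonicity of the root component and the fact that augmenting edges have startpoints in the current root component are firmly in hand, the $+1$ contributed by the parent edge of $r^*$ supplies exactly the strict inequality needed, and the remaining bijection argument is a clean matching driven by the tree structure of $\mathscr{A}$.
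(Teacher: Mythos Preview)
Your argument follows the paper's proof essentially line for line: choosing $r^*$ as the first vertex of $R'$ to join the root component, together with the observation that every skipped edge to $r^*$ was processed before that moment (and hence starts in the current root component, which is then disjoint from $R'$), is exactly the paper's verification of the parking-function inequality; and your counter-driven traversal rule is the paper's construction of the inverse map, including the type~II--leaf contradiction.

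The one place where the write-up is too terse is the last sentence. Saying ``the path determined by $\pi$, and hence the leaf, is unique'' only establishes that the inverse construction is well defined; it does not yet show that the forward map $(A,S)\mapsto\pi$ is injective. You still need to argue that \emph{any} type~I leaf with parking function $\pi$ is reached by the traversal rule for $\pi$. This follows directly from your first-paragraph observations (at the unique step where $r$ joins the root component exactly $\pi(r)$ skips to $r$ have already occurred, and at every earlier step where the augmenting edge points to $r$ fewer than $\pi(r)$ such skips have occurred, so the left/right choices along the actual path to $(A,S)$ agree with the rule), but you should say so explicitly; the paper does, via the ``reach $r$ too soon / too late'' argument.
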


\begin{proof}
Let $\pi$ be derived from $A$ as in the Theorem. Fix a non-empty set~$R'\subset R\setminus\{r_0\}$. Regarding the unique path in $\mathscr A$ from the root $(\varnothing,\varnothing)$ to $(A,S)$, let $(A',S')$ be the first node along the path so that the root component of $A'$ has a vertex, say $r$, from $R'$. It is obvious that $A'\subset A$ and $S'\subset S$. All edges of $S$ that end at $r$ actually belong to $S'$ because once $r$ is in the root component, no edges that end there can enlarge the arborescence and therefore they will never serve as augmenting edges. It is also clear that all edges in $S'$ ending at $r$, as well as the edge of $A'$ that ends at $r$, have their startpoint in $R\setminus R'$. Hence $\pi(r)<\deg_{R'}(r)$, which proves our first claim.

Let us now fix an arbitrary parking function $\pi\colon R\setminus\{r_0\}\to\N$. We will construct the corresponding spanning arborescence by finding the path in $\mathscr A$ that leads to it from the root. The root $(\varnothing,\varnothing)$ is obviously such that the number of skipped edges ending at any vertex $r\in R\setminus\{r_0\}$ is at most $\pi(r)$. Suppose that we have already built a path in $\mathscr A$ ending at the node $(A,S)$ that also has the property that 
\begin{equation}\label{eq:kellmeg}
\text{each vertex }r\in R\setminus\{r_0\}\text{ has at most }\pi(r)\text{ elements of }S\text{ pointing to it.}
\end{equation}

Let the augmenting edge of $(A,S)$ be $\delta=(q,r)$. If $\pi(r)$ is strictly more than the number of edges in $S$ pointing to $r$, then we pass to the left descendant $(A,S\cup\{\delta\})$ of $(A,S)$. If $\pi(r)$ equals the number of elements of $S$ ending at $r$, then we pass to the right descendant $(A\cup\{\delta\})$. In either case, the new node of $\mathscr A$ has the property~\eqref{eq:kellmeg}, so we may continue along our path until we arrive at a leaf $(A_\pi,S_\pi)$ in $\mathscr A$.

We claim that $(A_\pi,S_\pi)$ cannot be a type II leaf. Indeed, assume that the set $R'$ of isolated points of $A_\pi$ is non-empty. Since $\pi$ is a parking function, there exists a vertex $r\in R'$ so that the number of edges starting in the root component and ending at $r$ is more than $\pi(r)$. But because all of those edges belong to $S_\pi$, this contradicts \eqref{eq:kellmeg}.

By the previous paragraph, $A_\pi$ is a spanning arborescence. It is clear from the construction that its associated parking function is $\pi$: for each $r\in R\setminus\{r_0\}$, we did select an edge pointing to $r$ into $A_\pi$ and we did so when exactly $\pi(r)$ skipped edges lead to $r$.

Finally, we argue that spanning arborescences $A'\ne A_\pi$ may not induce $\pi$. Let the paths in $\mathscr A$ that lead from the root to $(A_\pi,S_\pi)$ and to $(A',S')$, respectively (for the appropriate sets $S_\pi$, $S'$), part ways at the node $(A,S)$. Let $(A,S)$ have the augmenting edge $\delta=(q,r)$. If our earlier choice (leading to $(A_\pi,S_\pi)$) was the left descendant of $(A,S)$ then the right descendant, along with all its subsequent descendants including $(A',S')$, is such that the number of skipped edges pointing to $r$ is less than $\pi(r)$ (they ``reach $r$ too soon''). On the other hand, if earlier we chose the right descendant, then along the branch of $\mathscr A$ corresponding to the left descendant, we have more than $\pi(r)$ skipped edges ending at $r$ (i.e., those arborescences ``reach $r$ too late''). This completes the proof.
\end{proof}


\begin{proof}[Proof of Theorem \ref{thm:homfly=park}]
Since every skipped edge for the spanning arborescence $A$ has its unique endpoint in $R\setminus\{r_0\}$, the number of skipped edges for $A$ is the same as the sum of the values of the corresponding parking function. Thus by the definition of the parking function enumerator (cf.\ \eqref{eq:index}, \eqref{eq:park}), we have
\begin{equation*}
\text{top of }P_{L_G}=v^{n-s+1}\sum_{\text{type I leaves}} v^{2k}=
v^{n-s+1}\sum_{\text{parking functions }}v^{2i(\pi)}
=v^{n-s+1}p(v^2).
\end{equation*}
I.e., Theorem \ref{thm:homfly=park} is a direct consequence of Theorems \ref{thm:treetop} and \ref{thm:park}.
\end{proof}

Finally, we note that in the case when the directed graph $J$ is derived from a connected undirected graph~$K$ by replacing each edge with a pair of oppositely oriented edges between the same two vertices, the parking function enumerator $p_J$ is related to the Tutte polynomial~$T_K$ via the formula
\[p_J(u)=u^{b_1(K)}T_K(1,1/u),\]
where $b_1(K)$ is the first Betti number of $K$ (viewed as a one-dimensional complex). To see this, one first has to note the (quite direct) connection \cite{ps} between parking functions and critical configurations of the abelian sandpile model (a.k.a.\ chip firing game), and then apply a formula of Merino L\'opez \cite{merino}.

In particular, when the plane bipartite graph $G$ is such that one of its color classes, say $E$, contains only degree $2$ vertices (i.e., when the pair $(V,E)$ can be viewed as a graph $K^*$ with planar dual $K$, which induces the directed graph $J=G^*$), then the duality formula of the Tutte polynomial implies
\[p_{G^*}(u)=u^{|V|-1}T_{K^*}(1/u,1)=I_{(V,E)}(u),\]
where $I$ is the interior polynomial \cite{hypertutte} of the (hyper)graph $K^*=(V,E)$. Thus, Theorem~\ref{thm:homfly=park} represents progress toward the Conjecture in the Introduction of \cite{jkr}.

\section{An improvement on Morton's inequality}\label{hitoshi}

This section adds the last remaining piece to establish our results.
Namely, our present goal is to prove that type II leaves in the computation tree of Section \ref{sec:comptree} do not affect the top of the HOMFLY polynomial. We have seen how this boils down to Proposition \ref{biglemma}, an important technical result that slightly strengthens Morton's inequality for a specific kind of link diagram. 

First, we establish several lemmas on oriented curves immersed in the plane. We assume these immersions to be generic, i.e., to have no self-tangencies or triple points. Since that is exactly the class of curves that we get if we forget the crossing information in an oriented link diagram, we will refer to our immersed curves as \emph{link projections}. It is useful for us to study them because the numbers of crossings and of Seifert circles in a link diagram (and thus the upper bound that we are seeking in Proposition \ref{biglemma}) only depend on the corresponding link projection. We may apply Reidemeister moves in this context as well.

\begin{defn}\label{def:reidmoves}
We introduce the following terms for certain isotopies of link projections.\footnote{A note on terminology: `b' moves would be the inverses of the `a' moves below, but those will not play a role in our treatment.} 
\begin{itemize}
\item
A \emph{Reidemeister I-a move} removes a kink of a link projection (see Figure~\ref{fig:RI}).
\begin{figure}[h]
  \includegraphics[width=2.5in]{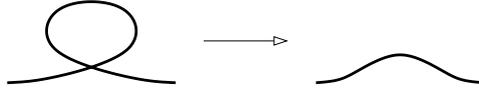}
  \caption{Reidemeister I-a move}
  \label{fig:RI}
\end{figure}
Here, either orientation is allowed.
\item
We call the move shown in Figure~\ref{fig:cyclic_RIIa} (Figure~\ref{fig:noncyclic_RIIa}, respectively) a \emph{cyclic Reidemeister II-a move} (\emph{noncyclic Reidemeister II-a move}, respectively).
\begin{figure}[h]
  \includegraphics[width=3in]{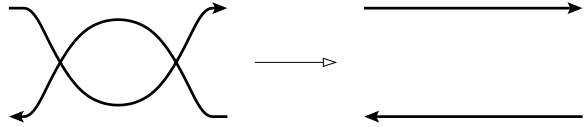}
  \caption{Cyclic Reidemeister II-a move}
  \label{fig:cyclic_RIIa}
\end{figure}
\begin{figure}[h]
  \includegraphics[width=3in]{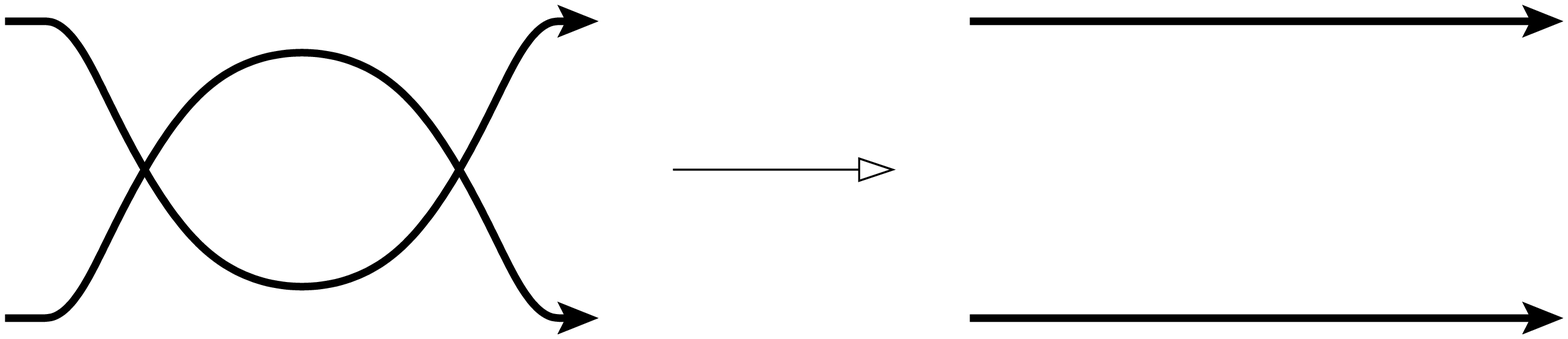}
  \caption{Noncyclic Reidemeister II-a move}
  \label{fig:noncyclic_RIIa}
\end{figure}
\item
The isotopy shown in Figure~\ref{fig:global_noncyclic_RIIa} is called a \emph{global noncyclic Reidemeister II-a move}. Here, the shaded region may contain arcs from the link projection. A noncyclic Reidemeister II-a move is a special case of a global noncyclic Reidemeister II-a move.
\begin{figure}[h]
  \includegraphics[width=3in]{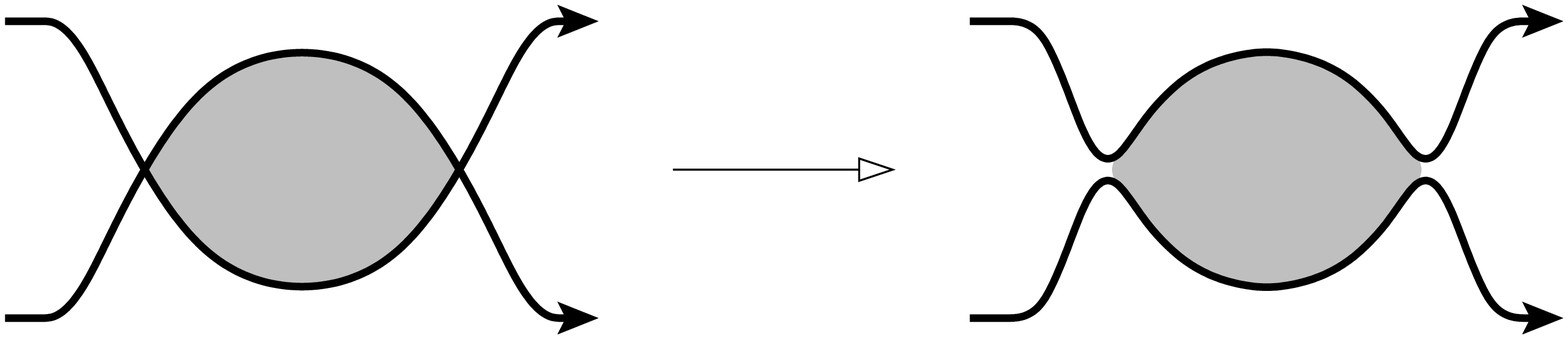}
  \caption{Global noncyclic Reidemeister II-a move}
  \label{fig:global_noncyclic_RIIa}
\end{figure}
\item
We call the move shown in Figure~\ref{fig:noncyclic_RIII} a \emph{noncyclic Reidemeister III move}.
\begin{figure}[h!]
  \includegraphics[width=2.5in]{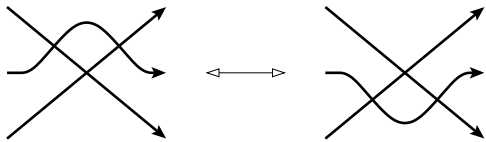}
  \caption{Noncyclic Reidemeister III move}
  \label{fig:noncyclic_RIII}
\end{figure}
\end{itemize}
\end{defn}

For a link projection $D$, let $n(D)$ and $s(D)$ denote the number of crossings and Seifert circles, respectively. Let us analyze how the value $n(D)-s(D)$, which is essentially our desired upper bound, changes under the moves of Definition \ref{def:reidmoves}.

\begin{defn}
If an isotopy (either of a link projection or of a link diagram) reduces $n-s$, then we call it a \emph{good move}.
If the isotopy preserves $n-s$, then we call it a \emph{fair move}.
\end{defn}

\begin{lem}\label{lem:goodmoves}
A Reidemeister I-a move is a fair move, a cyclic Reidemeister II-a move is a good or fair move, a (local or global) noncyclic Reidemeister II-a move is a good move, 
and a noncyclic Reidemeister III move is a fair move.
\end{lem}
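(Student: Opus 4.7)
The plan is to compute $\Delta n$ and $\Delta s$ separately for each of the four moves, since the claim is about the quantity $n-s$. Here $\Delta n$ is immediate from the definitions ($-1$ for RI-a, $-2$ for the RII-a moves, $0$ for RIII), so the real work is to compute $\Delta s$. My main tool will be the local picture of a Seifert smoothing: at each crossing, the orientation-preserving smoothing pairs the four arm-ends into two non-crossing arcs respecting orientation; this preserves strand identity at a parallel crossing and swaps it at an antiparallel one.

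I would first handle the easy cases. For RI-a, the kink's smoothing yields an isolated small Seifert circle while the main strand passes through unaffected, so $\Delta s=-1$ and $\Delta(n-s)=0$. For noncyclic (parallel-strand) RII-a, local or global, both bigon crossings are parallel, so their smoothings preserve strand identity and produce two parallel non-crossing arcs in the bigon region---topologically the same as what appears after the RII-a move. Any further crossings inside the shaded region in the global version smooth independently. Hence $\Delta s=0$ and $\Delta(n-s)=-2$.

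The subtler case is cyclic (antiparallel-strand) RII-a. Here both bigon smoothings swap strand identities, creating a small closed Seifert circle inside the bigon plus two short reconnecting arcs on the exterior side of each crossing. These reconnecting arcs pair the four attaching points of the bigon in a ``vertical'' fashion (matching points on the same side of the bigon). After the RII-a move the bigon is gone, and the same four attaching points are instead paired ``horizontally'' by two arcs running through the region. The rest of the projection's Seifert smoothings must also pair those four points in one of these two patterns. In the ``horizontal'' case one counts two Seifert circles before the move (one long circle plus the small internal loop) and two after, giving $\Delta s=0$; in the ``vertical'' case one gets three Seifert circles before (two vertical loops plus the internal one) and only one after, giving $\Delta s=-2$. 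Either way $\Delta(n-s)\in\{0,-2\}$, exactly as the lemma predicts.

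Finally, for the noncyclic RIII move, all three crossings and their orientations are preserved, so each local smoothing is combinatorially unchanged. The three smoothings together form a small planar graph inside the triangle region that pairs the six attaching points on its boundary; a short case analysis on the possible orientation patterns (the noncyclic hypothesis rules out the fully cyclic one) shows that this endpoint-pairing is identical in the pre- and post-RIII configurations, giving $\Delta s=0$ and $\Delta(n-s)=0$. I expect the main obstacle to be the cyclic RII-a case, where one must carefully combine the exterior pairing of the four attaching points with the presence of the small internal Seifert loop in order to identify the two possible values of $\Delta s$ and confirm that neither can produce a positive $\Delta(n-s)$.
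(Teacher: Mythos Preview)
Your proposal is correct and follows essentially the same approach as the paper: compute $\Delta n$ and $\Delta s$ separately for each move and read off $\Delta(n-s)$. The paper's own proof is in fact terser than yours---it simply asserts the Seifert-circle counts (e.g., for the noncyclic Reidemeister~III move it states without further argument that $s$ is unchanged), whereas you spell out the local smoothing pictures and the two subcases of the cyclic Reidemeister~II-a move in more detail.
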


\begin{proof}
From the left panel in Figure~\ref{fig:RI_1_Seifert} (where the dashed curves indicate Seifert circles) one sees immediately that a Reidemeister I-a move decreases both the number of Seifert circles and the number of crossings by one. So a Reidemeister I-a move is a fair move.

\begin{figure}[h]
  \includegraphics[width=3in]{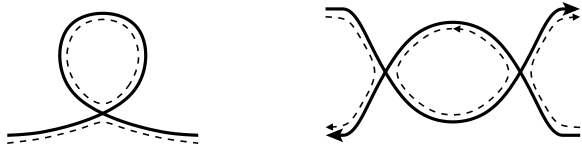}
  \caption{Seifert circles before a 
  I-a and a cyclic II-a move.}
  \label{fig:RI_1_Seifert}
\end{figure}

Let us consider the case of a cyclic Reidemeister II-a move. Before the move, either three or two Seifert circles are adjacent to the two crossings involved in the move. (In Figure~\ref{fig:RI_1_Seifert}, right panel, the Seifert circle leaving at the top-right point may or may not re-emerge at the top-left point.)
In the first case, the move decreases the number of Seifert circles by two, and in the second case, the number of Seifert circles remains the same.
Since the number of crossings is reduced by two, a cyclic Reidemeister II-a move is either a good or a fair move.

Under a (global) noncyclic Reidemeister II-a move, the configuration of Seifert circles does not change. As the number of crossings gets reduced by two, a (global) noncyclic Reidemeister II-a move is a good move.

Since neither the number of Seifert circles nor the number of crossings changes under a noncyclic Reidemeister III move, it is a fair move.
\end{proof}

If one of the good moves above is applied to a link diagram, then the value of $n-s$ actually drops by $2$. Hence if a link diagram admits a good Reidemeister move (or in fact, any good move), then (by the invariance of the HOMFLY polynomial) Theorem \ref{thm:morton} implies Proposition \ref{biglemma} for that case. It is also easy to see the following.

\begin{lem}\label{lem:fair}
If we are able to apply a fair or good move to a link diagram and the estimate of Proposition \ref{biglemma} holds after the move, then it also holds before the move.
\end{lem}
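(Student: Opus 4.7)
The plan is to combine two elementary ingredients: the topological invariance of the HOMFLY polynomial (which makes it immune to all of the moves of Definition \ref{def:reidmoves}) and the elementary accounting of $n-s$ carried out in Lemma \ref{lem:goodmoves}.

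First I would set notation: let $D$ and $D'$ be the diagrams before and after the move, with crossing numbers $n,n'$ and Seifert circle counts $s,s'$. Since each of the isotopies in Definition \ref{def:reidmoves} realizes a genuine Reidemeister equivalence of the underlying oriented link, we have $P_D(v,z)=P_{D'}(v,z)$; in particular, the two polynomials share the same maximal $z$-degree.

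Next I would unpack the two definitions: a fair move preserves $n-s$, while a good move (as inspected case-by-case in the proof of Lemma \ref{lem:goodmoves}) strictly decreases $n-s$, in fact by exactly $2$. So in either case $n'-s'\le n-s$. Combining this inequality with the hypothesis that every term of $P_{D'}$ has $z$-exponent at most $n'-s'-1$ yields that every term of $P_D$ has $z$-exponent at most $n'-s'-1\le n-s-1$, which is precisely the bound Proposition \ref{biglemma} requires for $D$.

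The only point that could trip up such a tidy argument concerns the global noncyclic Reidemeister II-a move, whose shaded region in Figure \ref{fig:global_noncyclic_RIIa} is allowed to contain further strands of the diagram; one must verify that this move is indeed an ambient isotopy of the link (not merely of the projection) so that $P$ is truly preserved. This is the main obstacle I would flag, but it is a cosmetic one: the move simply slides two arcs across the shaded region, and since no crossing change is involved, this is a planar (hence ambient) isotopy. With that point noted, the two-line accounting above completes the proof.
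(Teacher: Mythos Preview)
Your argument is correct and matches the paper's own reasoning, which is given only implicitly: the paper writes ``It is also easy to see the following'' and states the lemma without proof, having already noted just above that good moves drop $n-s$ by $2$ and that HOMFLY is invariant under isotopy.

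One small correction to your side remark: the global noncyclic Reidemeister II-a move is \emph{not} a planar isotopy when the shaded region contains strands---sliding the two bigon arcs across those strands requires genuine Reidemeister~II and~III moves. Whether this can be done without crossing changes depends on the crossing data along the bigon boundary; the paper handles this later (in the proof of Theorem~\ref{prop:main}) by first changing crossings so that the slide becomes an honest isotopy. None of this affects the lemma itself, since the hypothesis ``we are able to apply a fair or good move to a link diagram'' already presupposes a genuine isotopy, and your core two-line argument (HOMFLY invariance plus $n'-s'\le n-s$) then goes through unchanged.
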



However for now, we are still working in the category of link projections. As our next intermediate step, we will establish a way of handling certain self-intersections. By \emph{emptying} a region of a link projection (or diagram), we mean an isotopy 
that leaves the boundary arcs of the region fixed so that at the end, the interior of the region is disjoint from the projection/diagram.

\begin{lem}\label{lem:monogon}
Any monogon in a link projection can be emptied by a finite sequence of Reidemeister I-a moves, 
cyclic Reidemeister II-a moves, (global) noncyclic Reidemeister II-a moves, and noncyclic Reidemeister III moves.
\end{lem}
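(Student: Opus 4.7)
The plan is to induct on $N(R)$, the number of crossings of the link projection that lie in the closed monogon $\overline R$, with the apex $c$ counted once. The base case $N(R) = 1$ is the already-empty monogon. For the inductive step, I would locate a crossing-reducing move that keeps us within the allowed catalogue.

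The first step is to look for a subarc $\alpha$ of the projection entirely contained in $R^\circ$ with both endpoints on $\ell$. If $\alpha$ together with one of the two subarcs of $\ell$ joining its endpoints lies on a different strand of the projection from $\ell$ (a noncyclic bigon), then the global noncyclic Reidemeister II-a move from Definition \ref{def:reidmoves} applies directly, erasing the two endpoints of $\alpha$ on $\ell$ irrespective of whatever else is contained in the bigon. This strictly decreases $N(R)$, so the inductive hypothesis closes the case.

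The second step treats the situation in which every arc of the projection with endpoints on $\ell$ and interior in $R^\circ$ lies on the same strand as $\ell$, so that only cyclic bigons arise at $\ell$. Here I would pass to an innermost subdisk $D \subset \overline R$ bounded either by a self-loop of the projection inside $R^\circ$ or by a projection arc together with a subarc of $\ell$. Such a $D$ exists by a finite-descent argument on inclusion. If $\partial D$ is a self-loop, a Reidemeister I-a move removes the corresponding crossing; if $\partial D$ consists of a projection arc and a cyclic subarc of $\ell$, the emptiness of $D^\circ$ allows a cyclic Reidemeister II-a move. Either way $N(R)$ drops and induction applies.

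The main obstacle — and the step where noncyclic Reidemeister III moves are forced into the argument — is the case in which the cell decomposition of $\overline R$ induced by the projection (together with $\ell$) contains no face with fewer than three sides. A plain Euler characteristic count on the resulting $4$-valent planar graph does not by itself exclude an all-triangle decomposition, so R-III moves are needed to shuffle faces until a monogon or bigon appears. I would introduce a secondary complexity, for example the sum over all triangular faces of the combinatorial distance (in the dual cell graph) from each triangle to $\ell$, and argue that an appropriately chosen noncyclic R-III move strictly decreases this secondary complexity while preserving $N(R)$, so that finitely many R-III moves restore us to Step 1 or Step 2. Verifying termination of this reshuffling is the delicate technical point of the proof.
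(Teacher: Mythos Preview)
Your outline has a genuine gap in Step~2 and an unresolved obstacle in Step~3.

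In Step~2 you assert that an innermost subdisk $D$ (bounded either by a self-loop or by a projection arc together with a subarc of $\ell$) is empty, and hence admits a Reidemeister I-a or cyclic II-a move. This is false. Take an arc $\alpha$ forming a cyclic bigon $D$ with $\ell$, and suppose a second arc $\beta$ enters and exits $D$ through $\alpha$ (not through $\ell$). Then $D$ is innermost among disks of your two allowed types, yet $D^\circ$ contains crossings and no single II-a move applies. The same happens for self-loops: an innermost monogon (one containing no further monogons) may still be crossed by other arcs, so R-I-a does not apply directly. Your dichotomy ``either an empty innermost disk exists, or all faces are triangles'' therefore misses the main case.

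Step~3 is where the real difficulty lies, and the sketch does not address it. The allowed catalogue contains only \emph{noncyclic} Reidemeister~III moves, and you give no reason why an empty triangle you locate should be noncyclic rather than cyclic. The secondary-complexity idea is also not shown to terminate: a noncyclic R-III move can create new triangles elsewhere, and it is unclear that your proposed quantity decreases. The paper handles exactly this by building a nested sequence of cyclic bigons $B_0\supset B_1\supset\cdots\supset\tilde B$, each satisfying the property that no arc exits through its lower boundary. This forces the arcs in the terminal bigon $\tilde B$ to be ``cutting'' arcs with a fixed orientation pattern, which in turn guarantees (via a separate lemma on triangles) that the empty triangles one finds are noncyclic. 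The orientation control is the key idea you are missing; without it there is no mechanism to ensure the R-III moves you need are of the permitted type.
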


\begin{proof}
By starting with an innermost one, we may assume that the monogon contains no other monogons.
In other words, we assume that no arc inside the monogon intersects itself.
If an arc makes a cyclic (noncyclic, respectively) bigon with the monogon, then let us call it a cyclic (noncyclic, respectively) arc.

We proceed by induction on the number of crossings inside and on the boundary of the monogon. The monogon is empty if and only if this number is $1$. Our goal is to show that if a monogon has at least one (directed, non-self-intersecting) arc crossing it, then a sequence of the specified moves can be performed to reduce the number of crossings.

If a global noncyclic Reidemeister II-a move is possible, then we can achieve our goal immediately. Strictly speaking, if one of the arcs shown in Figure \ref{fig:global_noncyclic_RIIa} is part of the monogon's boundary, then the global noncyclic Reidemeister II-a move has to be followed by an isotopy of the plane (i.e., one that does not change the combinatorics of the link projection) to restore the monogon to its original position. Even more strictly speaking, the move itself should be carried out so that the monogon arc stays fixed throughout. It is easy to see that this can be done. For the sake of brevity, we omit this technicality later on.

Hence from now on, we assume that any bigon formed by our arcs (including the monogon's boundary) is cyclic. In particular, all arcs inside the monogon will be assumed cyclic.




Let $\gamma_0$ be the arc in the monogon whose exit point occurs last along the boundary. (We may assume without loss of generality that the monogon is clockwise oriented as in Figure~\ref{fig:monogon}, so that $\gamma_0$ is the arc with the leftmost exit point.) 
\begin{figure}[h]
\labellist
\pinlabel $\gamma_0$ at 70 190
\pinlabel $B_0$ at 400 120
\pinlabel $\alpha_0$ at 560 80
\endlabellist
  \includegraphics[width=3in]{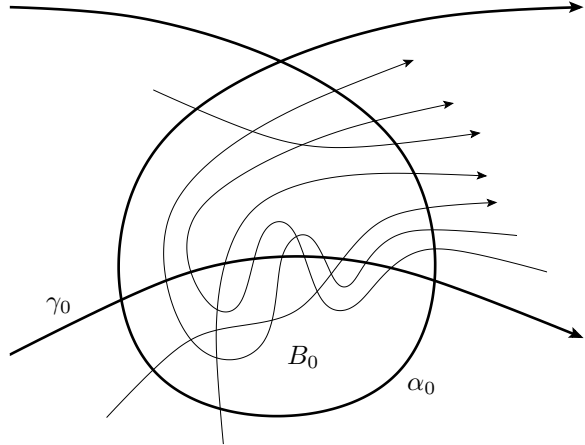}
  \caption{Monogon with cyclic arcs.}
  \label{fig:monogon}
\end{figure}
Let $B_0$ be the bigon determined by $\gamma_0$ and the monogon. Let us also put $\alpha_0=\partial B_0\setminus\gamma_0$ for the monogon arc that bounds $B_0$.

For the remaining part of the proof, we will consider bigons $B$ inside our monogon with their two boundary arcs designated as \emph{upper} and \emph{lower}. For the bigon $B_0$ of the previous paragraph, $\alpha_0$ is the lower arc and $\gamma_0$ is the upper one. By our choice of $\gamma_0$, we see that $B_0$ has the property that
\begin{equation}\label{eq:nbe}
\text{no arc exits the bigon through its lower boundary arc.}
\end{equation}
We will use a recursive procedure to define a nested sequence of bigons $B_0,B_1,\ldots$, all of which will satisfy property \eqref{eq:nbe}. All bigons in the sequence will of course be cyclic and have their boundary oriented the same way (i.e., clockwise).

Keeping in mind our assumption that all bigons are cyclic, it is easy to see that if a bigon has property \eqref{eq:nbe}, then there can only be two kinds of arcs crossing it. The two cases are depicted in Figure \ref{fig:cutbite}, and will be referred to as \emph{cutting} and \emph{biting} arcs, respectively. 

\begin{figure}[htb]
   \centering
   \includegraphics[width=2in]{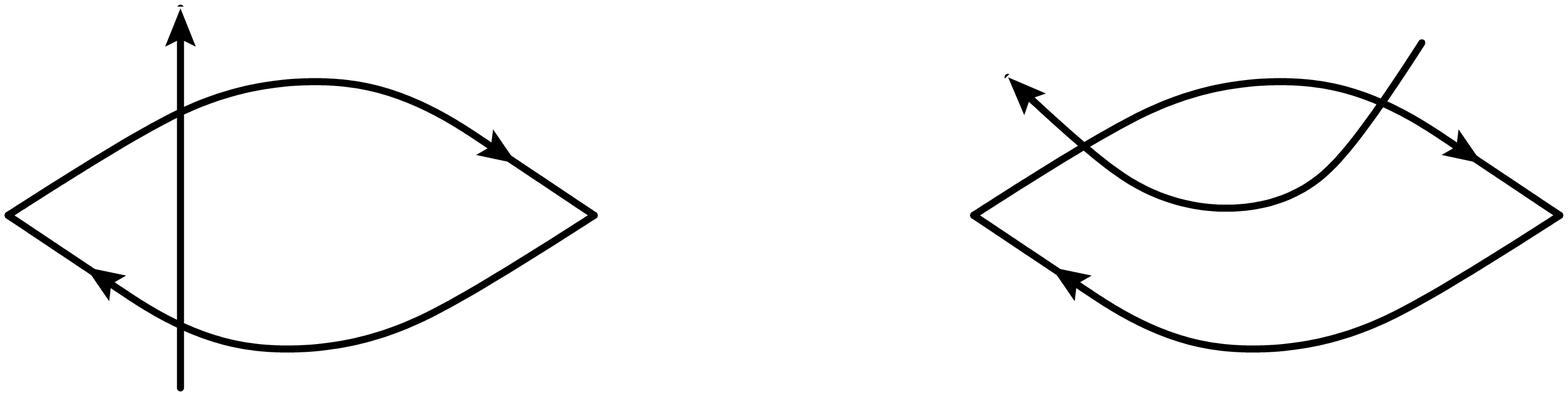} 
   \caption{Cutting (left) and biting (right) arcs in a bigon.}
   \label{fig:cutbite}
\end{figure}

Let now $B_i$ be a bigon with upper arc $\gamma_i$ and lower arc $\alpha_i$ so that property \eqref{eq:nbe} holds. Assume that $B_i$ has a biting arc. Let $\alpha_{i+1}$ denote the biting arc whose exit point from $B_i$ is leftmost along $\gamma_i$. Let $B_i'$ be the bigon formed by $\gamma_i$ and $\alpha_{i+1}$, see Figure \ref{fig:wheredoesitgo}. If no arc (inside $B_i$)  crosses $\alpha_{i+1}$ downward, then $B_i'$, bounded by lower arc $\alpha_{i+1}$ and upper arc~$\gamma_{i+1}=\gamma_i$ (the latter appropriately shortened) has property~\eqref{eq:nbe} and we denote it by $B_{i+1}=B_i'$. Otherwise, let $p_i$ denote the leftmost point along $\alpha_{i+1}$ where an arc~$\gamma_{i+1}$ exits $B_i'$. 

\begin{figure}[htb]
\labellist
\small
\pinlabel $\alpha_i$ at 270 10
\pinlabel $\gamma_i$ at 160 170
\pinlabel $\alpha_{i+1}$ at 280 180
\pinlabel $p_i$ at 130 50
\pinlabel $q_i$ at 200 70
\pinlabel $\searrow$ at 90 130
\pinlabel $B_i'$ at 40 180
\pinlabel $\nearrow$ at 30 50
\pinlabel $B_i$ at -10 10
\pinlabel $\alpha_i$ at 650 10
\pinlabel $\gamma_i$ at 480 160
\pinlabel $\alpha_{i+1}$ at 680 160
\pinlabel $p_i$ at 550 50
\pinlabel $q_i$ at 570 170
\pinlabel $r_i$ at 370 95
\pinlabel $\alpha_i$ at 1030 10
\pinlabel $\gamma_i$ at 1040 130
\pinlabel $\alpha_{i+1}$ at 1040 180
\pinlabel $\alpha_i$ at 1410 10
\pinlabel $\gamma_i$ at 1300 170
\pinlabel $\alpha_{i+1}$ at 1170 170
\pinlabel $\alpha_i$ at 1790 10
\pinlabel $\gamma_i$ at 1680 170
\pinlabel $\alpha_{i+1}$ at 1800 180
\pinlabel $p_i$ at 1710 60
\pinlabel $q_i$ at 1610 60
\endlabellist
   \centering
   \includegraphics[width=\linewidth]{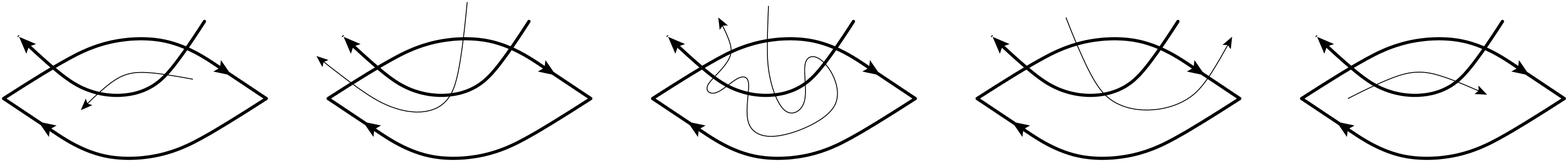} 
   \caption{Hypothetical positions of the arc $\gamma_{i+1}$. Only the last option can be reconciled with our assumptions.}
   \label{fig:wheredoesitgo}
\end{figure}

We claim that the portion of $\gamma_{i+1}$ in $B_i'$ may only be arranged as in the rightmost panel of Figure~\ref{fig:wheredoesitgo}. The reasons for this are as follows. Let $q_i$ denote the point where $\gamma_{i+1}$ enters $B_i'$ for the last time before leaving at $p_i$.
\begin{itemize}
\item The point $q_i$ may not be on $\alpha_{i+1}$ and to the right of $p_i$ because then $\alpha_{i+1}$ and $\gamma_{i+1}$ would form a noncyclic bigon.
\item Assuming that $q_i$ is on $\gamma_i$, we consider the point $r_i$ where $\gamma_{i+1}$ leaves $B_i$ for the first time after passing through $p_i$. By property \eqref{eq:nbe}, $r_i$ is on $\gamma_i$.
\item The point $r_i$ may not be to the left of $B_i'$ because of the way that $\alpha_{i+1}$ was chosen.
\item The point $r_i$ may not be on $\partial B_i'$ and to the left of $q_i$ because in order to get there, $\gamma_{i+1}$ would have to form a noncyclic bigon with $\alpha_{i+1}$.
\item The point $r_i$ may not be to the right of $q_i$ because then $\gamma_{i+1}$ and $\gamma_i$ would form a noncyclic bigon.
\end{itemize}
In this case, then, we let the bigon $B_{i+1}$ be bounded by the lower arc $\alpha_{i+1}$ and the upper arc $\gamma_{i+1}$. Because of the way $p_i$ was chosen, $B_{i+1}$ satisfies property \eqref{eq:nbe}.

We continue constructing the sequence $B_0, B_1,\ldots$ until we run out of bigons. I.e., the last member $\tilde B$ of the sequence has no biting arcs. 
If $\tilde B$ is empty, we can reduce the number of crossings in the monogon by a cyclic Reidemeister II-a move. Otherwise, take the (cutting) arc in $\tilde B$ whose starting point (along the lower part of the boundary) is leftmost.
It cuts $\tilde B$ into two triangles, one of which (on the left) is noncyclic.
By Lemma~\ref{lem:Delta} below, we can find an empty (and, because of the way cutting arcs are oriented, necessarily noncyclic) triangle, adjacent to the upper part of $\partial\tilde B$, inside this triangle.
Let us remove that empty triangle from $\tilde B$ by a noncyclic Reidemeister~III move.
Thus we have reduced the number of crossings inside $\tilde B$ and then by an obvious induction argument we can assume that the cutting arcs in $\tilde B$ do not intersect each other inside the bigon.
Now we can empty $\tilde B$ by a series of noncyclic Reidemeister III moves and finally apply a cyclic Reidemeister II-a move to remove $\tilde B$ itself. This completes the proof by induction.
\end{proof}

We separated the following Lemma from the previous proof because orientation does not matter for this part.

\begin{lem}\label{lem:Delta}
Let $\Delta$ be a triangle in a link projection with no monogon or bigon inside.
If a side of $\Delta$ has no crossings on it, then each of the other two sides has an adjacent empty triangle.
\end{lem}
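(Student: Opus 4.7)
The plan is to induct on $m$, the total number of crossings lying on the two non-clean sides of $\Delta$. Write $c$ for the clean side, let $a,b$ be the other two sides, and put $A = a \cap b$. As a preliminary step I would establish that every arc of the link projection meeting the interior of $\Delta$ is a simple chord joining a point of $a$ to a point of $b$. No closed component can lie entirely in $\Delta$: a self-intersecting one would have an innermost self-crossing bounding a monogon, contradicting the hypothesis, while a simple closed component sits in a disk disjoint from $\partial\Delta$ and plays no role in what follows. Since $c$ carries no crossings, every chord endpoint lies on $a \cup b$; a chord with both endpoints on the same side would form a bigon with that side, so each chord must join $a$ to $b$.

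If $m = 0$ there are no chords inside $\Delta$, so $\Delta$ itself is the desired empty triangle, adjacent to both $a$ and $b$. For $m > 0$, let $p$ be the crossing on $a$ nearest to $A$, let $\gamma$ be the chord through $p$, and let $q$ denote its other endpoint on $b$. The segments $Ap \subset a$ and $Aq \subset b$ together with $\gamma$ bound a sub-triangle $R \subset \Delta$ whose clean side is $Ap$. The sub-triangle $R$ inherits the no-monogon and no-bigon hypotheses, and every chord of $\Delta$ that enters $R$ must do so through a crossing on $Aq$ and must exit across $\gamma$; crucially, no such chord can meet $\gamma$ more than once, since two consecutive crossings would cut off a bigon bounded by a sub-arc of the chord and a sub-arc of $\gamma$. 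Consequently the crossing count on the two non-clean sides of $R$ is strictly smaller than $m$, and the inductive hypothesis applies.

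The inductive hypothesis supplies an empty triangle inside $R$ adjacent to the segment $Aq$, which is in turn an empty triangle in $\Delta$ adjacent to side $b$. Performing the mirror construction, starting instead from the crossing on $b$ nearest to $A$ and forming the analogous sub-triangle $R'$ with clean side on $b$, yields an empty triangle adjacent to side $a$. Together these give the two required empty triangles.

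The principal subtlety will be verifying that $R$ really does inherit the hypotheses: that a chord of $\Delta$ meets $\gamma$ at most once, and that no new bigon or monogon is created inside $R$. Each such claim reduces to exhibiting a forbidden bigon or monogon in the original $\Delta$ whenever it fails, and should follow cleanly from the preliminary classification of chords as running between $a$ and $b$. The remaining bookkeeping, namely that the crossing count on the two non-clean sides strictly decreases so that the induction terminates, is then routine.
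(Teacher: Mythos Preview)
Your proof is correct and follows essentially the same inductive strategy as the paper: pass to a sub-triangle with a new clean side and recurse. The only difference is cosmetic---you take the sub-triangle $Apq$ on the $A$-side of the chord $\gamma$ and run the mirror argument for the other side, whereas the paper works on the $C$-side (triangle $A'B'C$) and splits into cases according to whether $\gamma$ meets the other chords; your choice simply avoids that case distinction.
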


\begin{proof}
We proceed by induction on the the number of arcs in $\Delta$. If there is no arc at all in $\Delta$, then $\Delta$ itself is the required triangle. Suppose that for any triangle with less than $n$ arcs inside which form no monogons or bigons, and so that there is no crossing on one side of the triangle, we can find an empty triangle next to each of the other two sides.

Let us consider now the triangle $ABC$ with $n$ arcs inside and assume that there are no crossings on the edge $AB$.
All $n$ arcs connect the sides $AC$ and $BC$ since there are no bigons inside $ABC$.
Let $A'$ be the crossing on $AC$ that is nearest to $A$ and let $B'$ be the crossing on $BC$ connected to $A'$ by an arc.

If the arc $A'B'$ does not cross any of the other $n-1$ arcs then the triangle~$A'B'C$ contains all those $n-1$ arcs and we may apply the inductive hypothesis to $A'B'C$ to find empty triangles in it adjacent to $A'C$ and to $B'C$.
Otherwise, let $B''$ be the crossing on $A'B'$ that is nearest to $A'$ and let $C'$ be the crossing formed by $A'C$ and the arc passing through $B''$. (The point $C'$ does occur on $A'C$ because of the way $A'$ was chosen.) 
Then the triangle $A'B''C'$ (with empty side $A'B''$) satisfies the assumption in the inductive hypothesis.
Therefore we can find an empty triangle adjacent to the side~$A'C'$.
The same argument shows that there is an empty triangle adjacent to the side $BC$ as well.
\end{proof}

So far in this paper, we treated skein computation trees from the point of view of the Seifert graph. There is however an older approach, based on the notion of a descending diagram \cite{cromwell}. We will borrow some ideas from that context to prove
an equivalent version of Proposition \ref{biglemma}, which is the main result of this section.

\vspace{-.7cm}

\begin{figure}[htbp]
  \includegraphics[bb=0 100 700 800,width=3in]{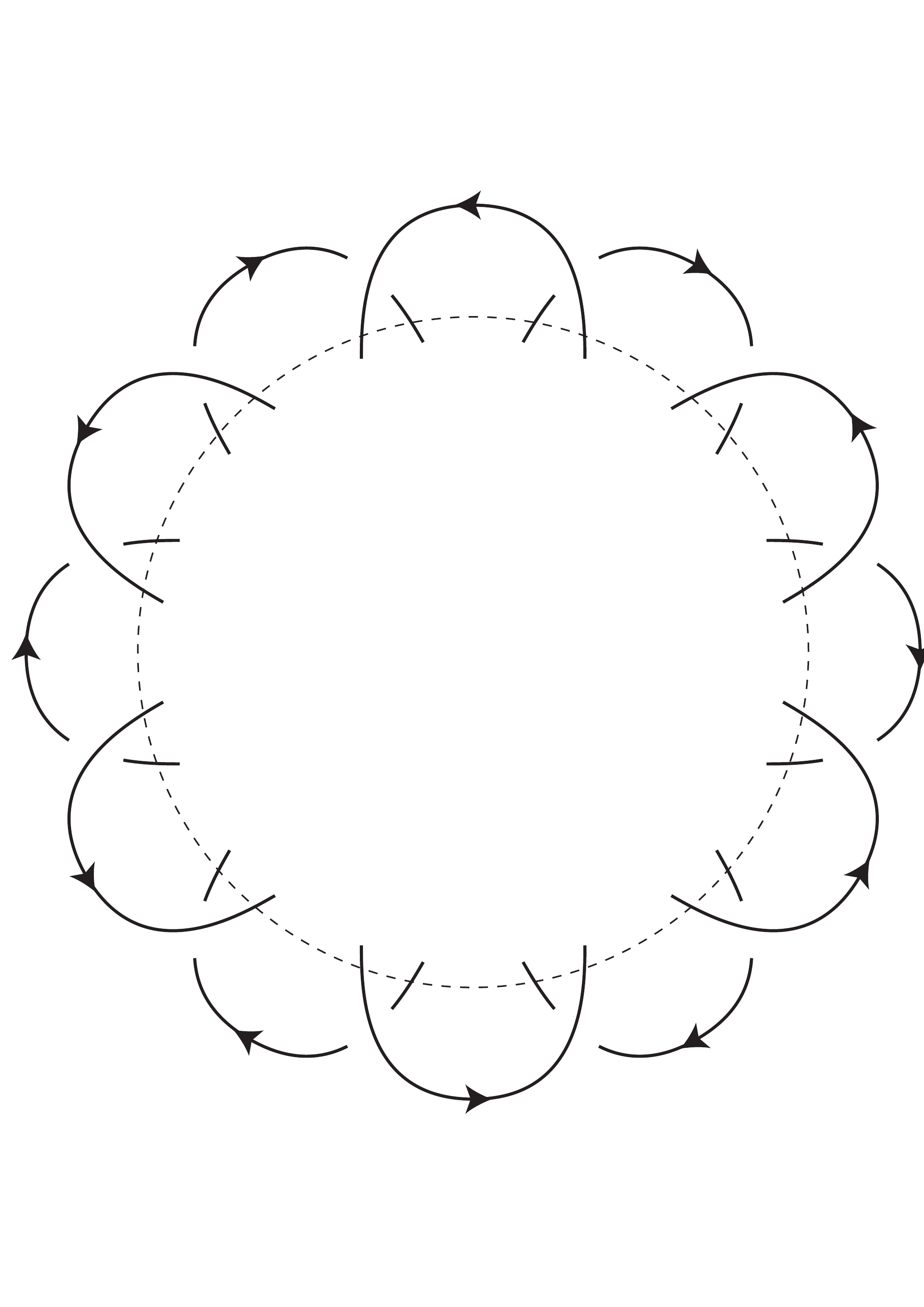}
  \caption{An alternating contour with $k=6$ outer over-arcs and outer under-arcs.}
  \label{fig:outer_arcs}
\end{figure}

\vspace{-.3cm}

\begin{thm}\label{prop:main}
Let $D$ be a link diagram containing the part shown in Figure~\ref{fig:outer_arcs}, and $L$ the associated link. That is, outside of the dashed circle, there is no piece of $D$ other than the $2k$ arcs shown, where $k\ge1$. Then Morton's inequality 
(Theorem~\ref{thm:morton}) is not sharp; in fact, we have
\begin{equation}\label{eq:Morton_nonsharp}
  \maxdeg_z(P_L(v,z))\le n(D)-s(D)-1.
\end{equation}
\end{thm}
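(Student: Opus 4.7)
The plan is to establish the bound by combining the projection-level reductions of Lemmas~\ref{lem:goodmoves}--\ref{lem:Delta} with the HOMFLY skein relation applied along the alternating contour, organized as an induction on $n(D)$. The key conceptual point is that, by Lemma~\ref{lem:fair}, any good move (one that reduces $n-s$ by $2$) applicable to $D$ immediately yields \eqref{eq:Morton_nonsharp} from Morton's bound (Theorem~\ref{thm:morton}) on the simplified diagram. So the strategy is either to find a good move directly, or to use the skein relation to reduce $D$ to diagrams where such a move exists, or where the inductive hypothesis applies.

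The base case $k=1$ is handled by a noncyclic Reidemeister II-a move (a good move by Lemma~\ref{lem:goodmoves}) performed at the pair of crossings that bound the contour region; this drops $n-s$ by $2$ and the result follows from Morton. For $k \ge 2$, I would apply the HOMFLY skein relation at a contour crossing $c$:
\[
P_{D}\;=\;v^{\pm2}\,P_{D_c}\;+\;v^{\pm1}\,z\,P_{D_0},
\]
where $D_c$ switches $c$ and $D_0$ smooths it. For $D_0$, the orientation-preserving smoothing at $c$ merges the two contour arcs meeting there into a single arc and identifies two Seifert circles, producing a smaller alternating contour with $k-1$ pairs of arcs and satisfying $n(D_0)-s(D_0)=n(D)-s(D)$. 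Hence the inductive hypothesis applied to $D_0$ yields $\maxdeg_z(P_{D_0}) \le n(D_0)-s(D_0)-1 = n(D)-s(D)-1$, so the $vzP_{D_0}$ term respects the bound.

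The main obstacle is the switched diagram $D_c$, whose complexity $n(D_c)-s(D_c)=n(D)-s(D)$ is unchanged and whose alternating contour is broken locally at $c$. I would address this by iterating the skein operation along the contour: because the region outside the dashed circle is completely controlled by the hypothesis, successive switches eventually produce a local configuration amenable to a good move from Definition~\ref{def:reidmoves}. The most promising target is a noncyclic Reidemeister II-a move, or the global version of Figure~\ref{fig:global_noncyclic_RIIa}, which may become available after Lemma~\ref{lem:monogon} is used to empty any monogons that arise during the process; Lemma~\ref{lem:Delta} then guarantees empty triangles to support any required noncyclic Reidemeister III rearrangements. The delicate point to verify is that this iteration terminates before the alternating contour is entirely destroyed, and that each projection-level move we invoke lifts to a genuine Reidemeister move on the diagram (which constrains the admissible crossing signs). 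I expect this termination/lifting bookkeeping to absorb the bulk of the technical work in the proof.
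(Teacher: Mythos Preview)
Your inductive step contains a genuine gap. When you smooth a contour crossing $c$, the resulting diagram $D_0$ does \emph{not} inherit an alternating contour with $k-1$ pairs of arcs. The oriented smoothing joins half of the over-arc $O$ to half of the under-arc $U$, producing an arc that is over at the neighboring crossing $c-1$ and under at $c+1$; the region $r$ is now bounded by $2k-1$ arcs at $2k-1$ crossings, and the new arc is neither consistently clockwise nor counterclockwise around $r$. So the hypothesis of the theorem fails for $D_0$ and you cannot invoke the inductive hypothesis there. Your arithmetic is also off: smoothing a crossing never changes the Seifert circles, so $s(D_0)=s(D)$ and $n(D_0)-s(D_0)=n(D)-s(D)-1$, not $n(D)-s(D)$ as you wrote. (With the correct value the degree count for the $vzP_{D_0}$ term would still close up, but only if the inductive hypothesis applied to $D_0$, which it does not.) Finally, your treatment of the switched diagram $D_c$ is only a sketch: ``iterate skein along the contour until a good move appears'' is not an argument, and since each switch further degrades the alternating pattern, there is no evident reason the process terminates in a diagram you can control.

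The paper avoids all of this by never touching the contour crossings. Its primary induction is on $N$, the number of crossings \emph{inside} the dashed circle $C$. The key observation is that in a skein triple at an interior crossing, both $D_\pm$ and $D_0$ retain exactly the same alternating contour with the same $k$, so the bound for any one of $D_+,D_-$ follows from the bound for the other together with the (inductively known) bound for $D_0$. This reduces the problem to showing that after changing some crossings inside $C$ the bound holds; the monogon lemma then lets one assume the interior arcs are simple, and a secondary induction on $k$ (pulling an interior arc $\alpha$ out through the contour, handling two base cases A and B) finishes the argument. The structural point you are missing is that the alternating contour must be preserved throughout the induction, which forces all skein manipulations to happen in the interior.
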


\begin{proof}
We will refer to the link diagrams described in the Theorem (and in Proposition~\ref{biglemma}) as having an \emph{alternating contour}. A diagram with an alternating contour is manifestly not alternating, but we are confident that this will not lead to confusion. This notion is related to plane graphs with an alternating contour (cf.\ Section~\ref{sec:comptree}), but it is best to treat the two concepts as separate.

Let us call the $k$ arcs passing over the other arcs appearing in Figure~\ref{fig:outer_arcs} \emph{outer over-arcs}, while the $k$ arcs passing under the outer over-arcs are the \emph{outer under-arcs}. Let us denote the dashed circle shown in the diagram, along which the $4k$ endpoints of all outer arcs lie, with $C$.

We are going to use induction on the number $N$ of crossings that the diagram $D$ has inside $C$. When that number is $0$, we will first argue that outer over-arcs and outer under-arcs may not lie along the same component of $L$. Let us number the $4k$ points of $C\cap D$ counterclockwise around $C$. A quick examination of Figure \ref{fig:outer_arcs} shows that the modulo $4$ remainder class of each point tells exactly if it is a start- or an endpoint and whether of an over-arc or an under-arc. For any arc of $D$ across the interior of $C$ that follows right after an outer over-arc, its endpoint minus its startpoint has to equal $1$ modulo $4$: the difference has to be $1$ or $3$ so that the other arcs across $C$ may complete a crossingless matching of the $4k$ points, but $3$ is ruled out if we consider the prescribed orientations. This means that the arc leaves $C$ at another over-arc and the claim follows.

Now the previous paragraph implies that when $N=0$, the link $L$ is an unlink of at least $2$ components: The components containing the outer over-arcs form an unlink (with a crossingless diagram, no less), the same is true for the components through the under-arcs, and the two unlinks are separated by a copy of $S^2$. Since the Homfly polynomial of a $c$-component unlink is $(v^{-1}z^{-1}-vz^{-1})^{c-1}$, we have $\maxdeg_z\le-1$ in our case. Hence it suffices to show that $-1\le n(D)-s(D)-1=2k-s(D)-1$. But as every Seifert circle of $D$ (in the $N=0$ case) has to pass through the midpoint of at least one outer arc, this follows easily.

As to the inductive step, let us start with a general observation.
Let $(L_+,L_-,L_0)$ be a skein triple and suppose that $L_+$ (or $L_-$) and $L_0$ satisfy the inequality \eqref{eq:Morton_nonsharp}:
\begin{align*}
  \maxdeg_z(P_{L_{\pm}}(v,z))&\le n(D_{\pm})-s(D_{\pm})-1;
  \\
  \maxdeg_z(P_{L_0}(v,z))&\le n(D_0)-s(D_0)-1.
\end{align*}
Since $s(D_+)=s(D_-)=s(D_0)$ and $n(D_+)=n(D_-)=n(D_0)+1$, we have the inequalities
\begin{align*}
  \maxdeg_z(P_{L_{\pm}}(v,z))&\le n(D_{\mp})-s(D_{\mp})-1,
  \\
  \maxdeg_z(P_{L_0}(v,z))&\le n(D_{\mp})-s(D_{\mp})-2.
\end{align*}
On the other hand, as $P_{L_{\mp}}(v,z)=v^{\mp2}P_{L_{\pm}}(v,z)\mp zv^{\mp1}P_{L_0}(v,z)$ from \eqref{skeinstuff}, we see that
\begin{equation*}
  \maxdeg_{z}(P_{L_{\mp}}(v,z))
  \le
  \max\{\,\maxdeg_{z}(P_{\,L_{\pm}}(v,z)),\maxdeg_{z}(P_{L_0}(v,z))+1\,\}.
\end{equation*}
Therefore we have
\begin{equation*}
  \maxdeg_z(P_{L_{\mp}}(v,z))
  \le
  n(D_{\mp})-s(D_{\mp})-1.
\end{equation*}
This means that if $L_0$ and one member of the pair $L_+$, $L_-$ satisfy \eqref{eq:Morton_nonsharp}, then so does the other member.

Let us now fix $k$ and assume that we have a diagram $D$ as in the Theorem, with $N$ crossings inside $C$, as well as that \eqref{eq:Morton_nonsharp} holds whenever the number of crossings inside $C$ is less than $N$.
Because in a skein triple, $L_0$ always has one less crossing than $L_+$ or $L_-$, the observation above means that it suffices to show the following:
\begin{equation}\label{eq:valami}
\parbox[t]{.85\linewidth}{For every diagram $D$ as in the Theorem, with $2k$ outer arcs, it is possible to change some of the $N$ crossings inside $C$ so that \eqref{eq:Morton_nonsharp} holds for the link thus obtained.}
\end{equation}

We wish to apply isotopies and to rely on Lemma \ref{lem:fair} to prove \eqref{eq:valami}. If a Reidemeister move is possible for the corresponding link projection, then it becomes possible for the diagram as well after changing at most one crossing. None of the Reidemeister moves listed in Lemma \ref{lem:goodmoves} and used in the proof of Lemmas \ref{lem:monogon} and \ref{lem:Delta} increases the number of crossings. 

A special note is in order on global noncyclic Reidemeister II-a moves. When we apply such a move to a link diagram, first we change crossings along the boundary of the bigon (including at most one of the vanishing crossings). This is done while the total number of crossings inside $C$ is $N$ or less, so that we can rely on the inductive hypothesis. Then, `during' the isotopy, the number of crossings may temporarily exceed $N$ but that is fine since no crossing change is necessary at those stages.

Thus, with the help of Lemma \ref{lem:monogon} and the fact that a Reidemeister I-a move is fair, \eqref{eq:valami} reduces to 
\begin{equation}\label{eq:megvalami}
\parbox[t]{.85\linewidth}{For every diagram $D$ as in the Theorem with $2k$ outer arcs and at most $N$ crossings inside $C$, so that arcs inside $C$ do not self-intersect, it is possible to change some of the 
crossings inside $C$ so that \eqref{eq:Morton_nonsharp} holds for the resulting link.}
\end{equation}


We keep insisting on making changes inside $C$ only so that our main induction can proceed. We first establish \eqref{eq:megvalami} in two special cases.

\begin{enumerate}[leftmargin=20pt,topsep=5pt,itemsep=5pt,label=\Alph*]
\item\label{caseA}
We assume that there exists an outer over-arc in $D$ whose endpoints are joined by another arc $\alpha$ inside the dashed circle. See Figure \ref{fig:befordit}. By changing some crossings along $\alpha$, let us `float to the top' the unknot component arising from our assumption, and then let us separate it from the rest of the diagram as in the right panel of Figure \ref{fig:befordit}. We denote the resulting diagram with $\widetilde D$.

\begin{figure}[htbp] 
\labellist
\small
\pinlabel $\alpha$ at 220 180
\pinlabel $a$ at 245 350
\pinlabel $b$ at 200 353
\pinlabel $c$ at 127 310
\pinlabel $d$ at 312 313
\pinlabel $a$ at 905 350
\pinlabel $b$ at 845 353
\pinlabel $c$ at 777 310
\pinlabel $d$ at 962 313
\endlabellist
   \centering
   \includegraphics[width=\linewidth]{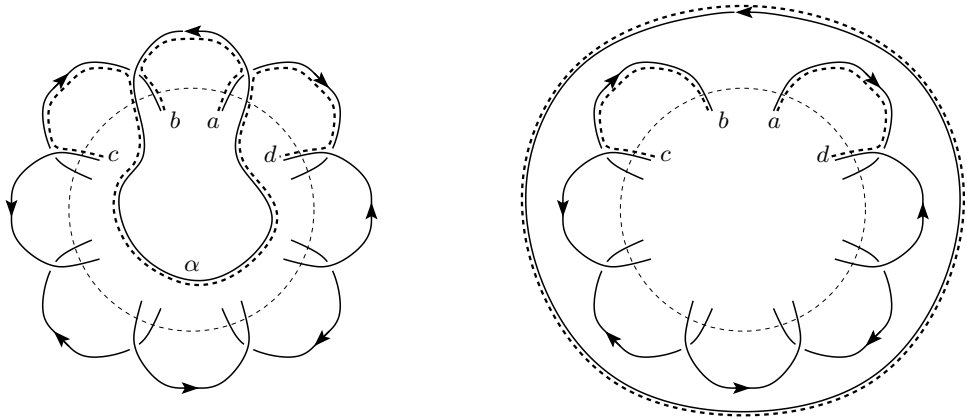} 
   \caption{Left: The diagram $D$ in case \ref{caseA}. The dashed arcs indicate pieces of components of $D'$. Right: The diagram $\widetilde D$, with several pieces of its Seifert circles.}
   \label{fig:befordit}
\end{figure}

We will show that $n(D)-s(D)\ge n(\widetilde D)-s(\widetilde D)+2$, i.e., that \eqref{eq:megvalami} for $D$ follows from applying Morton's inequality to $\widetilde D$. Let $N_\alpha$ be the number of crossings of $D$ along $\alpha$ (in particular, inside $C$). Let $D'$ be the diagram that results from smoothing all $N'$ crossings of $D$ away from $\alpha$ and inside $C$, as well as the $2k$ crossings outside of $C$. Let $s'$ be the number of components in $D'$. Then we have
\[n(D)-s(D)\ge2k+N_\alpha+N'-(s'+N_\alpha)=2k+N'-s',\]
because every time we smooth one of the remaining $N_\alpha$ crossings of $D'$, the number of components changes by $\pm1$.
On the other hand we have
\[n(\widetilde D)-s(\widetilde D)\le2k-2+N'-s',\]
from which our claim follows. To see why there are at least $s'$ Seifert circles in $\widetilde D$, notice that those components of $D'$ that do not pass through the points~$a$, $b$, $c$, or $d$ are found in $\widetilde D$ as well. There are $1$ or $2$ components of $D'$ that do pass through those points and $\widetilde D$ has at least $2$ components (the unknot we pulled out, and the one through, say, $a$) to account for them.

\item\label{caseB}
We assume that as we continue each outer over-arc of $D$ through the interior of $C$, we emerge at an outer under-arc. Let $\alpha_0$ be one of the arcs that we have just described. The endpoint of $\alpha_0$ is adjacent along $C$ to the startpoint $b_1$ of an outer over-arc. If we follow $D$ backward from $b_1$, then we hit either $C$ or $\alpha_1$ first. In the latter case, a noncyclic bigon is formed so that one of its corners is outside of $C$ but its other corner, as well as each arc of $D$ inside the bigon, is inside $C$. Therefore it is possible to change crossings inside $C$ so that a global noncyclic Reidemeister II-a move becomes possible for $D$. Hence in this case, we are done by Lemma \ref{lem:goodmoves} and Theorem \ref{thm:morton}.

In the case when the arc through $C$ that ends at $b_1$ is disjoint from $\alpha_0$, let us call the arc $\alpha_1$ and note that by our assumption in case \ref{caseB}, the startpoint $a_1$ of $\alpha_1$ has to be the endpoint of an outer under-arc. Now $a_1$ is adjacent along $C$ to the endpoint $a_2$ of an outer over-arc. Notice that $a_2$ and $\alpha_0$ are on opposite sides of $\alpha_1$. From here we iterate our argument: If the arc of $D$ that starts at $a_2$ hits $\alpha_1$ before $C$, then a noncyclic bigon and hence a good move can be found. Otherwise, follow the arc to its endpoint $b_2$ on $C$, which is necessarily on an outer under-arc, denote the arc $a_2b_2$ with $\alpha_2$, take the point $b_3$, adjacent along $C$ to $b_2$, where an outer over-arc starts, note that $\alpha_2$ separates $\alpha_1$ and $b_3$, and continue the iteration.

Our argument above produces a sequence $\alpha_0,\alpha_1,\ldots$ of parallel, disjoint chords in $C$ that are arranged monotonously. Since such a sequence cannot be infinite, it is guaranteed that after finitely many steps we find a global noncyclic Reidemeister~II-a move, which completes the proof of case \ref{caseB}.
\end{enumerate}

In the rest of the proof, we are going to verify \eqref{eq:megvalami} by a secondary induction on $k$ (keeping $N$ fixed). When $k=1$, the diagram $D$ falls under one of the cases \ref{caseA} and \ref{caseB} above.

Let us now assume that \eqref{eq:megvalami} holds whenever the number of outer over-arcs is less than $k$ and let $D$ be a link diagram with an alternating contour and $k$ outer over-arcs. Having established cases \ref{caseA} and \ref{caseB}, we may assume that there exists an outer over-arc in $D$ so that the arc $\alpha$ across $C$ that starts at its endpoint will end at a different outer over-arc. 

\begin{figure}[htbp]
\labellist
\small
\pinlabel $\alpha$ at 200 230
\endlabellist
  \includegraphics[width=\linewidth]{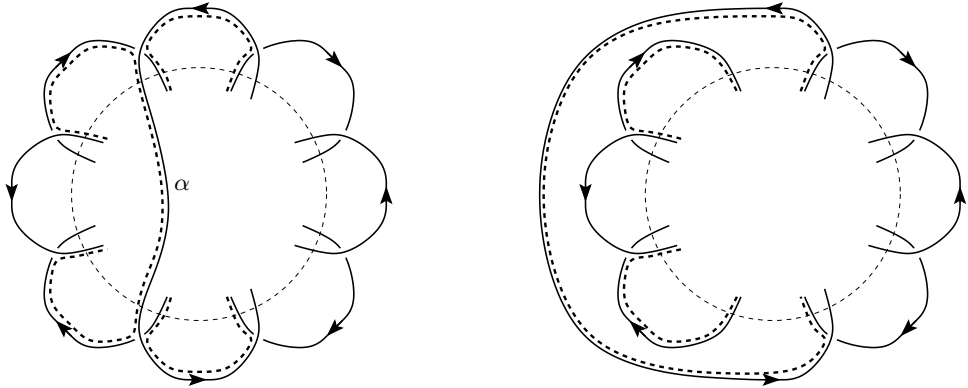}
  \caption{Left: Diagram $D$ with an alternating contour. Dashed arcs indicate components of $D'$. Right: the diagram $\widetilde D$.}
  \label{fig:outer_arcs_over}
\end{figure}

Given a diagram $D$ as in the left panel of Figure~\ref{fig:outer_arcs_over}, let $s'$ be the number of components in the diagram $D'$ that results from smoothing all crossings in $D$ except for those along $\alpha$.
Let $N_\alpha$ be the number of crossings on $\alpha$ and let $N'$ be the number of crossings of $D$ inside $C$ that are not on $\alpha$.
Then there is a total of $2k+N_\alpha+N'$ crossings in $D$ and the number of Seifert circles is at most $s'+N_\alpha$ by the same reason as before.
Hence we have
\begin{equation*}
  2k+N'-s'
  \le
  n(D)-s(D).
\end{equation*}


After changing crossings if necessary, let us pull $\alpha$ out as in the right panel of Figure~\ref{fig:outer_arcs_over} and call the resulting diagram $\widetilde{D}$.
Then the number of crossings in $\widetilde{D}$ is $2k-2+N'$ and the number of Seifert circles is at least $s'-2$. (It is possible that the arcs of $D'$ indicated in the left panel of Figure \ref{fig:outer_arcs_over} belong to three different components, whereas the arcs of $\widetilde D$ indicated on the right belong to just one component. But it is easy to see that nothing worse than that can happen.)
Therefore we have
\begin{equation*}
  n(\widetilde{D})-s(\widetilde{D})
  \le
  2k+N'-s',
\end{equation*}
meaning that the isotopy from $D$ to $\widetilde D$ was at least a fair move. But since $\widetilde D$ is also a diagram with an alternating contour but with less than $k$ outer over-arcs, we are done by Lemma \ref{lem:fair} and the inductive hypothesis.

This completes the secondary induction (on $k$), thus we have proved \eqref{eq:megvalami} and hence the inductive step in our main induction (on $N$) is now also established.
\end{proof}


\begin{thebibliography}{33}




\bibitem{jonevevan}D. Chebikin and P. Pylyavskyy, \textit{A family of bijections between $G$-parking functions and spanning trees}, J. Combin.\ Theory Ser.\ A 110 (2005), no.\ 1, 31--41.

\bibitem{cromwell}
P. R. Cromwell, \textit{Homogeneous links}, J. London Math.\ Soc.\ 39 (1989), no.\ 3, 535--552.



\bibitem{gv}R. Gopakumar and C. Vafa, \textit{On the gauge theory/geometry correspondence}, Adv.\ Theor.\ Math.\ Phys.\ 3 (1999), no.\ 5, 1415--1443.

\bibitem{jaeger}F.\ Jaeger, \textit{Tutte polynomials and link polynomials}, Proc.\ Amer.\ Math.\ Soc.\ 103 (1988), no.\ 2, 647--654.


\bibitem{jones}V.\ Jones, \textit{Hecke algebra representations of braid groups and link polynomials}, Ann.\ Math.\ 126 (1987), 335--388.

\bibitem{jkr}A. Juh\'asz, T. K\'alm\'an, and J. Rasmussen, \textit{Sutured Floer homology and hypergraphs}, to appear in Math.\ Res.\ Lett.

\bibitem{hypertutte}T. K\'alm\'an, \textit{A version of Tutte's polynomial for hypergraphs}, arXiv:1103.1057, to appear in Adv.\ Math.





\bibitem{merino}C. Merino L\'opez, \textit{Chip firing and the Tutte polynomial}, Ann.\ Comb.\ 1 (1997), no.\ 3, 253--259.

\bibitem{morton}H. R. Morton, \textit{Seifert circles and knot polynomials},
Math.\ Proc.\ Camb.\ Phil.\ Soc.\ 99 (1986), 107--109.

\bibitem{mp}K.\ Murasugi and J.\ Przytycki, \textit{The skein polynomial of a planar star product of two links}, Math.\ Proc.\ Camb.\ Phil.\ Soc.\ 106 (1989), 273--276.

\bibitem{ov}H. Ooguri and C. Vafa, \textit{Knot invariants and topological strings}, Nucl.\ Phys.\ B 577 (2000), no.\ 3, 419--438.

\bibitem{post}A.\ Postnikov, \textit{Permutohedra, Associahedra, and Beyond}, Int.\ Math.\ Res.\ Not.\ 2009, no.\ 6, 1026--1106.

\bibitem{ps}A.\ Postnikov and B.\ Shapiro, \textit{Trees, parking functions, syzygies, and deformations of monomial ideals}, Trans.\ Amer.\ Math.\ Soc.\ 356 (2004), no.\ 8, 3109--3142.





\bibitem{swartz}E. Swartz, \textit{From polytopes to enumeration}, {\tt http://www.math.cornell.edu/\raisebox{-2pt}{\~{}}ebs/math455.pdf}.






\end{thebibliography}
\end{document}